\documentclass[11pt]{article}
\usepackage[square, numbers]{natbib}

\usepackage[a4paper,margin=1in]{geometry}
\usepackage{amsmath,amssymb,amsthm,mathtools}
\usepackage{enumitem}
\usepackage{hyperref}
\usepackage{mathrsfs}

\hypersetup{
    colorlinks=true,
    linkcolor=blue,
    citecolor=blue,
    urlcolor=blue
}

\newtheorem{theorem}{Theorem}[section]
\newtheorem{proposition}[theorem]{Proposition}
\newtheorem{corollary}[theorem]{Corollary}
\newtheorem{lemma}[theorem]{Lemma}
\theoremstyle{definition}
\newtheorem{definition}[theorem]{Definition}
\newtheorem{example}[theorem]{Example}
\theoremstyle{remark}
\newtheorem{remark}[theorem]{Remark}

\newcommand{\R}{\mathbb R}
\newcommand{\N}{\mathbb N}
\newcommand{\T}{\mathbb T}
\newcommand{\EE}{\mathcal E}

\newcommand{\norm}[1]{\left\|#1\right\|}

\newcommand{\zetaen}[2]{\zeta_{#1}^{(#2)}}

\newcommand{\Cn}{C([0,1])}

\title{Pathwise integration beyond Young \\ via Faber--Schauder energy spaces}
\author{
    \textsc{Donghan Kim} 
    \thanks{Department of Mathematical Sciences, KAIST, South Korea (E-mail: {\it kimdonghan@kaist.ac.kr}).\\
    Acknowledgements: The author is grateful to Ioannis Karatzas for valuable suggestions that improved the paper.}
}
\date{\today}

\begin{document}

\maketitle

\abstract{We develop a pathwise integration theory based on Faber--Schauder energy spaces. The approach replaces the classical H\"older--Young and finite-variation Young conditions by dyadic summability conditions expressed in terms of Faber--Schauder coefficients. On the normalized interval $[0,1]$, these conditions define Banach spaces $\EE^p$, which we call Faber--Schauder energy spaces. For $p,q>1$ satisfying $1/p+1/q\ge1$, we prove that every pair $f\in\EE^p$ and $g\in\EE^q$ admits a continuous pathwise integral $I_{f,g}$, constructed from dyadic left Riemann sums. We call $I_{f,g}$ the Faber--Schauder integral, and show that it depends boundedly and bilinearly on $(f,g)$ in the corresponding energy norms. The integral satisfies additivity, integration by parts, and a dyadic Young--Lo\`eve estimate. It is also the uniform limit of classical Riemann--Stieltjes integrals of finite Faber--Schauder approximations. The Faber--Schauder integral agrees with the classical Young integral whenever the latter is available, but also applies to deterministic and Gaussian examples for which neither the H\"older--Young condition nor the finite-variation Young condition can be verified. In this sense, it provides a Faber--Schauder coefficient-based extension of Young's framework.}

\bigskip

\section{Introduction}  \label{sec:introduction}

Young's classical theory \cite{Young1936} gives a pathwise meaning to the Riemann--Stieltjes integral
\[
    \int_0^t f\,dg
\]
under complementary regularity assumptions on the integrand and the integrator. In its H\"older form, Young's condition requires
\[
    f\in C^\alpha, \qquad g\in C^\beta, \qquad \alpha+\beta>1,
\]
and in its variation form, it requires finite $p$-variation for $f$ and $q$-variation for $g$ with
\[
    \frac1p+\frac1q>1.
\]
These conditions have become fundamental in pathwise integration, stochastic analysis, and rough path theory. They are, however, global regularity conditions: H\"older regularity is controlled by the worst normalized oscillation over all pairs of points $s\ne t$, while $p$-variation involves a supremum over all partitions of the interval; see \eqref{def:Holder-norm} and \eqref{def:p-variation} below. In this paper we ask whether a pathwise integral can instead be constructed from dyadic scale-by-scale information encoded in the Faber--Schauder coefficients. For notational simplicity, we work on the normalized interval $[0,1]$; the construction on a general compact interval is obtained by an affine change of time.

The starting point is the observation that the Faber--Schauder expansion offers a natural way of measuring the behavior of a path along dyadic levels. The celebrated result of Ciesielski~\cite{Ciesielski:isomorphism} characterizes H\"older regularity in terms of Faber--Schauder coefficients. More precisely, if
\[
    f(t)=f(0)+\big(f(1)-f(0)\big)t +\sum_{n=0}^{\infty}\sum_{k\in I_n}\theta^f_{n,k}e_{n,k}(t), \qquad t\in[0,1],
\]
is the Faber--Schauder expansion of $f\in C([0,1])$, then for $\alpha\in(0,1)$
\[
    f\in C^\alpha([0,1]) \qquad \text{if and only if} \qquad \sup_{n\ge0}\sup_{k\in I_n} 2^{n(\alpha-\frac12)}\big|\theta^f_{n,k}\big|<\infty.
\]
Here, the Faber--Schauder coefficients $\theta^f_{n,k}$ in \eqref{eq:FS-coeff} are normalized second-order oscillations of the path on dyadic intervals of length $2^{-n}$, and $e_{n,k}$ are the Faber--Schauder functions; see Section~\ref{subsec:FS-expansion}. Thus, the H\"older version of Young's condition already admits a Faber--Schauder coefficient interpretation.

A related coefficient phenomenon appears when one studies variation along a fixed partition sequence. In a recent paper \cite{das-kim2024}, the asymptotic behavior of the $p$-th variation along a fixed partition sequence was characterized in terms of the corresponding Faber--Schauder coefficients. In the dyadic case, this says that boundedness of the dyadic $p$-th variation is equivalent to boundedness of the level quantities
\begin{equation}    \label{equiv:zetaen}
    \sup_{n\ge0}\zetaen{f}{p}(n)<\infty, \qquad \zetaen{f}{p}(n):= \bigg(2^{-\frac{np}{2}}\sum_{k\in I_n}\big|\theta^f_{n,k}\big|^p \bigg)^{\frac1p}.
\end{equation}
Ciesielski's theorem and the dyadic $p$-th variation criterion \eqref{equiv:zetaen} therefore point to two distinct ways in which Faber--Schauder coefficients measure path regularity: H\"older regularity is captured by a uniform $\ell^\infty$ bound on individually rescaled coefficients, while dyadic $p$-th variation is captured by a level-wise $\ell^p$ aggregation of the rescaled coefficients. The present paper develops the latter viewpoint in a new direction. By requiring the level energies $\zetaen{f}{p}$ to be summable across dyadic levels, together with a Schauder tail condition, we obtain a coefficient-based framework for pathwise integration.

Faber-Schauder coefficients have also been used extensively in the study of stochastic sample-path regularity. Ciesielski, Kerkyacharian and Roynette \cite{CiesielskiKerkyacharianRoynette1993} developed constructive sequence-space descriptions of function spaces associated with Gaussian processes, including fractional Brownian motion, while Roynette \cite{Roynette1993} studied Brownian motion and stochastic integrals in Besov spaces using Faber-Schauder coefficients. Our construction is in the same coefficient-level spirit, but the conditions imposed below are designed to control the dyadic refinement defects of Riemann sums rather than to characterize Besov regularity alone.

This viewpoint is different from the usual $p$-variation framework in Young's theory. The $p$-variation seminorm of \eqref{def:p-variation} takes the supremum of $p$-th powered increment sums over all partitions of the interval, whereas we fix the canonical dyadic partition sequence associated with the Faber--Schauder system. Instead of controlling all possible partitions, we control how the dyadic Riemann sums change from one level to the next through summability of the level energies, and use a Schauder tail condition to pass from dyadic endpoints to continuous paths on the whole interval. In this sense, our construction may be viewed as a dyadic sewing procedure expressed in Faber--Schauder coordinates: the one-step errors created by refining the elementary increment
\[
    A_{s,t}:=f(s)\big(g(t)-g(s)\big)
\]
are controlled by products of the Faber--Schauder energy quantities.

This leads to the Faber--Schauder energy spaces $\EE^p$; see Definition~\ref{def:energy-spaces}. Membership in $\EE^p$ consists of two coefficient conditions. The first is an $\ell^p$-summability condition on level energies in \eqref{equiv:zetaen}. For conjugate exponents $p,q>1$, the conditions
\[
    \zetaen{f}{p}\in\ell^p, \qquad \zetaen{g}{q}\in\ell^q
\]
ensure the construction of the dyadic endpoint integral $\int_0^t f\,dg$ for every dyadic point $t$. The second is an $\ell^p$-summability condition on the Schauder tail sequence
\begin{equation}    \label{def:H_f-n}
    H_f(n):=\sum_{m=n}^{\infty} 2^{-\frac m2}\sup_{k\in I_m}|\theta_{m,k}^f|, \qquad n\ge0.
\end{equation}
This tail condition is the additional ingredient that promotes the dyadic endpoint construction to a continuous integral on the whole interval. More precisely, once the level-energy conditions on both $f$ and $g$ ensure the dyadic endpoint integral, the condition $H_f\in\ell^p$ gives its unique continuous extension to $[0,1]$. Thus, if $f\in\EE^p$ and $g\in\EE^q$, we obtain a well-defined continuous path
\[
    I_{f,g}(t)=\int_0^t f\,dg, \qquad t\in[0,1].
\]
We call $I_{f,g}$ the \emph{Faber--Schauder integral} of $f$ against $g$. It is bilinear and additive over intervals, satisfies the integration-by-parts identity, and admits a dyadic analogue of the Young--Lo\`eve estimate.

The energy spaces also provide a natural Banach-space setting for the integral. We prove that each $\EE^p$ carries a natural Banach norm and that the energy spaces are monotone in the exponent: if $1<p<q$, then $\EE^p\subset\EE^q$ continuously. This monotonicity allows the theory, first proved for conjugate exponents, to apply to every pair $p,q>1$ satisfying $1/p+1/q\ge1$. In particular, for such exponents, the map $(f,g)\mapsto I_{f,g}$ is bounded and bilinear from $\EE^p\times\EE^q$ into $C([0,1])$. We also show that dyadic piecewise linear functions, obtained by finite-level Faber--Schauder truncation, are dense in $\EE^p$, and that the Faber--Schauder coefficient map identifies $\EE^p$ isomorphically with a natural Banach sequence space. This coefficient-space representation is in the spirit of Ciesielski's isomorphism theorem~\cite{Ciesielski:isomorphism} for H\"older spaces. We further prove an additional regularity result showing that, under a stronger decay condition on the integrand, the Faber--Schauder integral inherits the energy regularity of the integrator, thereby allowing it to serve again as an integrator in the construction of iterated Faber--Schauder integrals.

Another important feature of the theory is its compatibility with classical bounded-variation integration at finite levels. Let $\Pi_N f$ and $\Pi_M g$ be the finite Faber--Schauder truncations of $f$ and $g$. These are continuous piecewise linear functions, so the classical Riemann--Stieltjes integral
\[
    \int_0^t \Pi_N f\,d(\Pi_M g)
\]
is well-defined. We show that
\[
    \sup_{t\in[0,1]} \left| \int_0^t \Pi_N f\,d(\Pi_M g)-I_{f,g}(t) \right| \xrightarrow{N,M\to\infty}0.
\]
Thus, the Faber--Schauder integral is not merely an abstract limit of dyadic left sums; it is also the uniform limit of classical Riemann--Stieltjes integrals of canonical finite dyadic piecewise linear approximations. These approximants are determined by a finite number of observations of the paths on dyadic grids.

The construction is compatible with the classical Young integral on the overlap of the two theories. In the H\"older--Young regime, H\"older regularity implies the required energy-space membership, and in the finite-variation formulation the two integrals coincide whenever the paths also satisfy the corresponding energy-space assumptions. We construct examples showing that the energy-space criterion is complementary to the classical Young criteria: the Faber--Schauder integral can be well defined even for pairs satisfying neither the H\"older--Young condition nor the finite-variation Young condition.

We also discuss fractional Brownian motions~(fBM). For a fBM $B^H$ with Hurst parameter $H\in(0,1)$, we identify the sharp energy-space threshold:
\[
    B^H\in\EE^p \quad\text{almost surely if and only if} \quad H>\frac1p.
\]
Consequently, for two fBMs with Hurst indices $H_1$ and $H_2$, the energy-space criterion for conjugate exponents exactly matches the familiar Young threshold $H_1+H_2>1$. We then combine fBM with random level-sparse Faber--Schauder processes to obtain mixed stochastic examples outside the H\"older--Young regime.

The paper is organized as follows. Section~\ref{sec:preliminaries} introduces the basic notation and preliminary results. Section~\ref{sec:main-results} constructs the Faber--Schauder integral and proves its main properties, including the integration-by-parts formula, a Young--Lo\`eve type estimate, the Banach-space structure of $\EE^p$, and approximation by classical Riemann--Stieltjes integrals. Section~\ref{sec:examples} provides deterministic and stochastic examples, including examples beyond the H\"older--Young and finite-variation Young regimes. Finally, Section~\ref{sec:conclusion} discusses the relation with F\"ollmer-type pathwise integration and outlines several directions for future research.

\bigskip

\section{Preliminaries}\label{sec:preliminaries}

In this section, we introduce the dyadic notation, the Faber--Schauder expansion, and the coefficient quantities used throughout the paper.

\medskip

\subsection{Dyadic partitions and dyadic left sums}

For each $n\in \N_0:=\N\cup\{0\}$, let
\begin{equation*}
    \T_n := \Big\{ t_{n,k}:=k2^{-n} : k=0,1,\dots,2^n \Big\}
\end{equation*}
be the level-$n$ dyadic partition of $[0,1]$, and set
\[
    \T := \bigcup_{n\ge0}\T_n.
\]
Thus, $\T$ is the dense set of dyadic points in $[0,1]$. Write the index set 
\[
    I_n:=\{0,1,\dots,2^n-1\}
\]
and denote the level-$n$ dyadic intervals by
\[
    I_{n,k}:=[t_{n,k},t_{n,k+1}], \qquad k\in I_n.
\]
We also denote the left and right children of $I_{n,k}$ by
\begin{equation}    \label{left-right-children}
    I_{n,k}^L:=I_{n+1,2k}, \qquad I_{n,k}^R:=I_{n+1,2k+1}.
\end{equation}
For a fixed dyadic point $t\in\T$, let
\[
    \ell(t):=\min\{m\ge0:\ t\in\T_m\}.
\]

\begin{definition}[Dyadic left sums at dyadic endpoints]
\label{def:dyadic-endpoint-sums}
    Fix $t\in\T$. For $f,g\in\Cn$, define the dyadic left Riemann sums over the level-$n$ dyadic points by
    \begin{equation}    \label{def:left-Riemann-sum}
        S_n^L(t;f,g):=\sum_{k=0}^{2^nt-1} f(t_{n,k})\big(g(t_{n,k+1})-g(t_{n,k})\big), \qquad n\ge\ell(t),
    \end{equation}
    with the convention that the empty sum is zero. For $t=1$, we simply write
    \begin{equation}    \label{def:left-Riemann-sum-1}
        S_n^L(f,g):=S_n^L(1;f,g)=\sum_{k\in I_n}f(t_{n,k})\big(g(t_{n,k+1})-g(t_{n,k})\big), \qquad n\ge0.
    \end{equation}
    We say that the \emph{dyadic integral} over $[0,t]$ exists if the sequence $\big(S_n^L(t;f,g)\big)_{n\ge\ell(t)}$ converges, and denote it by
    \begin{equation}    \label{def:dyadic-integral-on-T}
        \int_0^t f\,dg:=\lim_{n\to\infty}S_n^L(t;f,g).
    \end{equation}
\end{definition}

\medskip

\subsection{Faber--Schauder expansion}  \label{subsec:FS-expansion}

Let
\[
    \psi(t):=
    \begin{cases}
        ~1, & t\in [0,\frac12),\\
        -1, & t\in [\frac12,1),\\
        ~0, & \text{otherwise},
    \end{cases}
\]
and define the dyadic Haar functions \cite{haar1910} by
\begin{equation}    \label{Haar}
    \psi_{n,k}(t):=2^{n/2}\psi(2^nt-k), \qquad n\ge0,\ k\in I_n.
\end{equation}
 Let $\N_{-1}:=\{-1,0,1,\dots\}$. With
\[
    \psi_{-1,0}(t):=1 \quad \text{for } t\in[0,1], \qquad I_{-1}:=\{0\},
\]
the system $\{\psi_{n,k}:n\in\N_{-1},\ k\in I_n\}$ is the Haar orthonormal basis of $L^2([0,1])$. The associated Faber--Schauder functions \cite{faber1910, schauder} are
\[
    e_{n,k}(t):=\int_0^t\psi_{n,k}(s)\,ds, \qquad n \in \N_{-1},\ k\in I_n.
\]

Every $f\in\Cn$ admits a unique Faber--Schauder representation
\begin{equation}\label{eq:FS-full}
    f(t)=f(0)+\sum_{n=-1}^{\infty}\sum_{k\in I_n}\theta_{n,k}^f e_{n,k}(t), \qquad t\in[0,1].
\end{equation}
Here $\theta_{n,k}^f$ are the Faber--Schauder coefficients, given by
\begin{equation}\label{eq:FS-coeff-1}
    \theta_{-1,0}^f:=f(1)-f(0),
\end{equation}
and, for $n\ge0$ and $k\in I_n$,
\begin{equation}\label{eq:FS-coeff}
    \theta_{n,k}^f=2^{n/2}\Big(2f(t_{n+1,2k+1})-f(t_{n,k})-f(t_{n,k+1})\Big).
\end{equation}

Throughout the paper, we consider functions on the unit interval $[0,1]$. The same arguments can be adapted to functions on $[0,T]$ for any $T>0$ by using the corresponding Haar--Schauder system on $[0,T]$.

\medskip

\subsection{Level energies, Schauder tails, and dyadic kernels}

In this subsection, we collect several quantities defined from the Faber--Schauder coefficients $\theta_{n,k}^f$ of a path $f\in\Cn$, introduced in \eqref{eq:FS-coeff}. These quantities will appear frequently throughout the construction. The level-energy and Schauder-tail quantities below are the ones already used in \eqref{equiv:zetaen} and \eqref{def:H_f-n}; here we record them systematically.

\begin{definition}[Level energies]  \label{def:level-energies}
    Fix $p>1$. For $f\in\Cn$, define the level energy sequence $\zetaen{f}{p}=(\zetaen{f}{p}(n))_{n\in\N_{-1}}$ by
    \[
        \zetaen{f}{p}(-1):=2^{1-\frac1p}|\theta_{-1,0}^f|=2^{1-\frac1p}\big|f(1)-f(0)\big|,
    \]
    and, for $n\ge0$,
    \begin{equation}    \label{def:zetaen_f}
        \zetaen{f}{p}(n):=\bigg(2^{-\frac{np}{2}}\sum_{k\in I_n}|\theta_{n,k}^f|^p\bigg)^{\frac1p}.
    \end{equation}
\end{definition}

\begin{definition}[Schauder tail sequence]
\label{def:schauder-tail-sequence}
    For $f\in\Cn$, define
    \[
        H_f(n):=\sum_{m=n}^{\infty}2^{-\frac m2}\sup_{k\in I_m}|\theta_{m,k}^f|, \qquad n\ge0,
    \]
    whenever the series is finite. We allow $H_f(n)=+\infty$ otherwise.
\end{definition}

Note that the affine level ($n = -1$) is not included in $H_f$.

For $p>1$, define the geometric kernel
\[
    K_p(j):=2^{-j(1-\frac1p)}, \qquad j\ge0.
\]
Since $1-\frac1p>0$, we have for any $1 \le r < \infty$
\begin{equation}\label{eq:kernel-lr}
    \norm{K_p}_{\ell^r(\N_0)}=\bigg(\sum_{j=0}^{\infty}2^{-rj(1-\frac1p)}\bigg)^{\frac1r} = \bigg(\frac{1}{1-2^{-r(1-\frac1p)}}\bigg)^{\frac1r} < \infty.
\end{equation}

For a sequence $a=(a_m)_{m\in\N_{-1}}$, define the convolution with $K_p$ by
\[
    (K_p*a)_n:=\sum_{m=-1}^{n}K_p(n-m)a_m, \qquad n\ge0.
\]
In particular, for $f\in\Cn$ we set, using \eqref{def:zetaen_f},
\begin{equation}\label{eq:increment-control-sequence}
    \mathcal A^{(p)}_f(n):=(K_p*\zetaen{f}{p})_n=\sum_{m=-1}^{n}K_p(n-m)\zetaen{f}{p}(m), \qquad n\ge0.
\end{equation}

For later use, if $a=(a_n)_{n\ge0}$ and $r\ge1$, write the $\ell^r$-norm of the tail sequence
\[
    \Vert a\Vert_{\ell^r_{\ge N}}:=\bigg(\sum_{n=N}^{\infty} |a_n|^r\bigg)^{\frac1r}, \qquad N\ge0.
\]

Finally, we say that $p,q>1$ are \emph{conjugates} if
\[
    \frac1p+\frac1q=1.
\]

\medskip

\subsection{$p$-variation, dyadic $p$-th variation, and Faber--Schauder coefficients}   \label{subsec:p-var}

A key concept in Young's integration theory and rough path theory is the $p$-variation seminorm. For $f\in C([0,1])$ and $p\ge1$, define the $p$-variation of $f$ as
\begin{equation}    \label{def:p-variation}
    \|f\|_{p\text{-var}} := \bigg(\sup_{\pi\in\Pi([0,1])}\sum_{i=1}^{N}|f(t_i)-f(t_{i-1})|^p\bigg)^{\frac1p},
\end{equation}
where the supremum is taken over all partitions $\pi=(0=t_0<t_1<\cdots<t_N=1)$ of $[0,1]$. This quantity can be difficult to estimate directly, since it requires computing the sums of $p$-th powered increments over all partitions.

A related but different notion is the $p$-th variation along a fixed partition sequence. This distinction is important: a path may have infinite $p$-variation in the above supremum sense, while still having a finite limiting $p$-th variation along a fixed sequence of partitions. Brownian motion provides the standard example. Almost every Brownian path has infinite $2$-variation, $\Vert B \Vert_{2\text{-var}} = \infty$, whereas its quadratic variation along standard refining partition sequences, such as the dyadic partitions, equals the elapsed time, that is, $[B](t)=t$.

We use the following dyadic version of $p$-th variation along $(\T_n)_{n\ge0}$. For $f\in C([0,1])$ and $p\ge1$, define
\begin{equation}    \label{p-th-variation}
    [f]^{(p)}_{\T_n}(t):=\sum_{j\in I_n}\big|f(t_{n,j+1}\wedge t)-f(t_{n,j}\wedge t)\big|^p, \qquad t\in[0,1].
\end{equation}
If $[f]^{(p)}_{\T_n}$ converges uniformly to a continuous function on $[0,1]$, then we say that $f$ admits finite $p$-th variation along the dyadic partition sequence and write
\begin{equation*}
    [f]^{(p)}_{\T}(t):=\lim_{n\to\infty}[f]^{(p)}_{\T_n}(t), \qquad t\in[0,1].
\end{equation*}
In this case, the limit $t\mapsto [f]^{(p)}_{\T}(t)$ is nondecreasing; see \cite[Definition~1.1 and Lemma~1.3]{perkowski2019}.

For some examples in Section~\ref{sec:examples}, we will use the following Faber--Schauder coefficient criterion for the finiteness of dyadic $p$-th variation from a recent work \cite{das-kim2024}.

\begin{lemma}[Dyadic case of Theorem 4.3 of \cite{das-kim2024}]    \label{lem:dyadic-Xp-characterization}
    Let $p > 1$ and $f \in C([0, 1])$ have the Faber--Schauder representation \eqref{eq:FS-full}. Then we have the equivalent conditions:
    \begin{equation*}
        \limsup_{n \to \infty} \, [f]^{(p)}_{\T_n}(1) < \infty \qquad \iff \qquad \limsup_{n \to \infty} \, \zetaen{f}{p}(n) < \infty.
    \end{equation*}
\end{lemma}

It is clear from the definitions that
\[
    \limsup_{n\to\infty} \, [f]^{(p)}_{\T_n}(1) = \infty \qquad \Longrightarrow \qquad \Vert f \Vert_{p\text{-var}} = \infty.
\]
Hence the equivalence above implies
\begin{equation}    \label{con:equiv-zetaen-p-var}
    \limsup_{n \to \infty} \, \zetaen{f}{p}(n) = \infty \qquad \Longrightarrow \qquad \Vert f \Vert_{p\text{-var}} = \infty.
\end{equation}

\bigskip

\section{Construction and properties of the Faber--Schauder integral}   \label{sec:main-results}

\subsection{Dyadic integrals on the dyadic points}
\label{subsec:one-step-estimate}

In this subsection, we construct an integral defined on the dyadic points $\T$ as a limit of the left Riemann sum in Definition~\ref{def:dyadic-endpoint-sums}. A sufficient condition for the construction involves the Faber--Schauder coefficients of the integrand and the integrator. This integral will be extended from $\T$ to the whole interval $[0, 1]$ in the next subsection.

For $f\in\Cn$ and $k\in I_n$, define the left and right child increments of
$f$ over the dyadic interval $I_{n,k}=[t_{n,k},t_{n,k+1}]$ by
\[
    f_{n+1,L}(k):=f(t_{n+1,2k+1})-f(t_{n+1,2k}), \qquad
    f_{n+1,R}(k):=f(t_{n+1,2k+2})-f(t_{n+1,2k+1}).
\]
We define $g_{n+1,L}(k)$ and $g_{n+1,R}(k)$ analogously for $g\in\Cn$.

\begin{lemma}[One-step telescoping identity]
\label{lem:telescoping}
    For $f,g\in\Cn$, $n\ge0$, $j=0,\dots,2^n$, and recalling \eqref{def:left-Riemann-sum}, 
    \begin{equation*}
        S_{n+1}^L(t_{n,j};f,g)-S_n^L(t_{n,j};f,g)=\sum_{k=0}^{j-1}f_{n+1,L}(k)g_{n+1,R}(k).
    \end{equation*}
    In particular, taking $j=2^n$ and recalling \eqref{def:left-Riemann-sum-1} give
    \[
        S_{n+1}^L(f,g)-S_n^L(f,g)=\sum_{k\in I_n}f_{n+1,L}(k)g_{n+1,R}(k).
    \]
\end{lemma}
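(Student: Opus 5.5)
The identity is a direct, block-by-block computation, so I plan to regroup the level-$(n+1)$ left sum according to the parent intervals of level $n$. Since $t_{n,j}=t_{n+1,2j}$, the level-$(n+1)$ sum over $[0,t_{n,j}]$ has exactly $2j$ terms. I will pair the terms with indices $2k$ and $2k+1$, for $k=0,\dots,j-1$, which are precisely the left and right children $I_{n,k}^L$ and $I_{n,k}^R$ of $I_{n,k}$ in the notation \eqref{left-right-children}. This turns both $S_{n+1}^L(t_{n,j};f,g)$ and $S_n^L(t_{n,j};f,g)$ into sums over $k=0,\dots,j-1$, so it suffices to compare the contributions of each single parent interval $I_{n,k}$.

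On a fixed $I_{n,k}$, the two level-$(n+1)$ terms are
\[
    f(t_{n+1,2k})\,g_{n+1,L}(k)+f(t_{n+1,2k+1})\,g_{n+1,R}(k).
\]
The level-$n$ term is $f(t_{n,k})\big(g(t_{n,k+1})-g(t_{n,k})\big)$. Using $t_{n,k}=t_{n+1,2k}$, this equals
\[
    f(t_{n+1,2k})\big(g_{n+1,L}(k)+g_{n+1,R}(k)\big).
\]
Subtracting, the $g_{n+1,L}(k)$ contributions cancel and what remains is
\[
    \big(f(t_{n+1,2k+1})-f(t_{n+1,2k})\big)\,g_{n+1,R}(k)=f_{n+1,L}(k)\,g_{n+1,R}(k).
\]
Summing over $k=0,\dots,j-1$ gives the claimed identity. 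For $j=0$ both sides are empty sums and hence zero, consistent with the convention in Definition~\ref{def:dyadic-endpoint-sums}. The case $j=2^n$ gives $t_{n,j}=1$, which is the stated special case by \eqref{def:left-Riemann-sum-1}.

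The only point needing care is the index bookkeeping. Specifically, I must check that the upper limit $2^{n+1}t_{n,j}-1=2j-1$ in \eqref{def:left-Riemann-sum} matches the pairing. Beyond that there is no real obstacle.
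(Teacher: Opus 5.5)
Your proposal is correct and follows the paper's proof: on each parent interval $I_{n,k}$ you compute the difference between the two level-$(n+1)$ terms and the level-$n$ term, the $g_{n+1,L}(k)$ contributions cancel, and the remainder $f_{n+1,L}(k)\,g_{n+1,R}(k)$ is summed over $k=0,\dots,j-1$. Your check that $2^{n+1}t_{n,j}-1=2j-1$ gives exactly the pairing of indices $(2k,2k+1)$ is the only bookkeeping needed, and it is right.
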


\begin{proof}
    For each $k=0,\dots,j-1$, a direct computation on the parent interval $I_{n,k}$ gives
    \begin{align*}
        &f(t_{n+1,2k})\big(g(t_{n+1,2k+1})-g(t_{n+1,2k})\big)+f(t_{n+1,2k+1})\big(g(t_{n+1,2k+2})-g(t_{n+1,2k+1})\big)\\
        & \hspace{6cm} -f(t_{n,k})\big(g(t_{n,k+1})-g(t_{n,k})\big)\\
        = \, &\big(f(t_{n+1,2k+1})-f(t_{n+1,2k})\big)\big(g(t_{n+1,2k+2})-g(t_{n+1,2k+1})\big).
    \end{align*}
    Summing over $k=0,\dots,j-1$ proves the result.
\end{proof}

For $0\le m\le n$ and $k\in I_n$, let $\kappa(m;n,k)\in I_m$ be the unique index such that
\[
    I_{n,k}\subset I_{m,\kappa(m;n,k)}.
\]
The Haar function $\psi_{m,\kappa(m;n,k)}$ in \eqref{Haar} is constant on $I_{n,k}$ when $m<n$, but not when $m=n$: in the latter case it changes sign at the midpoint of $I_{n,k}$. What we use is the following child-wise constancy: for each side $\sigma\in\{L,R\}$ there exists a sign
\[
    \varepsilon_m^\sigma(n,k)\in\{\pm1\}
\]
such that
\[
    \psi_{m,\kappa(m;n,k)}(u)=2^{\frac m2}\varepsilon_m^\sigma(n,k), \qquad \text{for a.e. }u\in I_{n,k}^\sigma.
\]
Here, recall from \eqref{left-right-children} the left $I_{n,k}^L$ and right $I_{n,k}^R$ child subintervals of $I_{n, k}$.

\begin{lemma}[Child increment estimate]
\label{lem:child-estimate}
    Let $r>1$ and $f\in\Cn$. Then, for every $n\ge0$, as in \eqref{eq:increment-control-sequence},
    \begin{equation*}
        \bigg(\sum_{k\in I_n}\big|f_{n+1,L}(k)\big|^r\bigg)^{\frac1r}\le \frac12\,\mathcal A^{(r)}_f(n).
    \end{equation*}
    The same estimate holds with $f_{n+1,L}$ replaced by $f_{n+1,R}$.
\end{lemma}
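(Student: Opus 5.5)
The plan is to write each child increment exactly in terms of Faber--Schauder coefficients from levels $m\le n$, and then take the $\ell^r$ norm over $k\in I_n$ level by level using Minkowski's inequality. This yields the convolution $K_r*\zetaen{f}{r}$ directly.

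\emph{Step 1: exact formula for the child increment.} Fix $n\ge0$ and $k\in I_n$. Since $e_{m,j}(t)=\int_0^t\psi_{m,j}$, we have
\[
    f_{n+1,L}(k)=\sum_{m=-1}^{\infty}\sum_{j\in I_m}\theta^f_{m,j}\int_{I^L_{n,k}}\psi_{m,j}(u)\,du .
\]
For $m\ge n+1$, each $e_{m,j}$ vanishes on $\T_{n+1}\subset\T_m$. Hence these levels contribute nothing, because both endpoints of $I^L_{n,k}$ lie in $\T_{n+1}$. For $0\le m\le n$, the only $j$ with $\psi_{m,j}\not\equiv0$ on $I^L_{n,k}$ is $j=\kappa(m;n,k)$. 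By the child-wise constancy recorded above, that function equals $2^{m/2}\varepsilon_m^L(n,k)$ on $I^L_{n,k}$. For $m=-1$ we have $\psi_{-1,0}\equiv1$. Since $|I^L_{n,k}|=2^{-(n+1)}$, this gives
\[
    \big|f_{n+1,L}(k)\big|\le 2^{-(n+1)}\Big(|\theta^f_{-1,0}|+\sum_{m=0}^{n}2^{\frac m2}\big|\theta^f_{m,\kappa(m;n,k)}\big|\Big).
\]
The same bound holds on the right child, with $\varepsilon^R$ replacing $\varepsilon^L$.

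\emph{Step 2: $\ell^r$ norm via Minkowski.} Take the $\ell^r(I_n)$ norm in $k$ and apply Minkowski's inequality to the finite sum over $m$. The key counting fact is that the map $k\mapsto\kappa(m;n,k)$ is exactly $2^{n-m}$-to-one onto $I_m$. Therefore
\[
    \Big(\sum_{k\in I_n}\big|\theta^f_{m,\kappa(m;n,k)}\big|^r\Big)^{\frac1r}=2^{\frac{n-m}{r}}\Big(\sum_{j\in I_m}|\theta^f_{m,j}|^r\Big)^{\frac1r}=2^{\frac{n-m}{r}}\,2^{\frac m2}\,\zetaen{f}{r}(m).
\]
Multiplying by $2^{-(n+1)}2^{m/2}$, the level-$m$ term becomes
\[
    \tfrac12\,2^{-(n-m)}2^{\frac{n-m}{r}}\zetaen{f}{r}(m)=\tfrac12K_r(n-m)\zetaen{f}{r}(m).
\]

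\emph{Step 3: the affine level.} The $m=-1$ term is constant in $k$. Its $\ell^r$ norm is therefore
\[
    2^{-(n+1)}2^{\frac nr}|\theta^f_{-1,0}|=\tfrac12\,2^{-n(1-\frac1r)}|\theta^f_{-1,0}|.
\]
On the other hand, $K_r(n+1)\zetaen{f}{r}(-1)=2^{-(n+1)(1-\frac1r)}2^{1-\frac1r}|\theta^f_{-1,0}|=2^{-n(1-\frac1r)}|\theta^f_{-1,0}|$. So this term is exactly $\frac12K_r(n-(-1))\zetaen{f}{r}(-1)$; this is precisely why the normalization $2^{1-1/r}$ appears in Definition~\ref{def:level-energies}. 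Summing Steps 2 and 3 over $m=-1,\dots,n$ gives $\frac12(K_r*\zetaen{f}{r})_n=\frac12\mathcal A^{(r)}_f(n)$.

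The only delicate point is Step 1: verifying that levels $m>n$ contribute nothing, and that at level $m=n$ the Haar function is still constant on each child. Both follow from the child-wise constancy and the vanishing of $e_{m,j}$ on $\T_m$. The rest is bookkeeping of powers of $2$.
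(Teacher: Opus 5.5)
Your proposal is correct and matches the paper's proof. You reduce the child increment to levels $-1,\dots,n$, apply Minkowski level by level using the $2^{n-m}$-to-one counting of $k\mapsto\kappa(m;n,k)$, and identify the affine term as $\tfrac12K_r(n+1)\zetaen{f}{r}(-1)$; writing the tent increments as integrals of $\psi_{m,j}$ and taking absolute values before Minkowski are only cosmetic differences.
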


\begin{proof}
    Consider a left child increment across $I_{n,k}^L=I_{n+1,2k}$. Since the endpoints $t_{n+1,2k}$ and $t_{n+1,2k+1}$ are level-$(n+1)$ dyadic points, all Faber--Schauder levels $m\ge n+1$ vanish at both endpoints. Hence, only levels $m\le n$ contribute; for $0 \le m \le n$, by the child-wise sign above,
    \[
        e_{m,\kappa(m;n,k)}(t_{n+1,2k+1}) - e_{m,\kappa(m;n,k)}(t_{n+1,2k}) = 2^{-(n+1)}2^{\frac m2}\varepsilon_m^L(n,k).
    \]    
    Therefore, with the affine level $m=-1$, we obtain
    \[
        f_{n+1,L}(k)=2^{-(n+1)}\theta_{-1,0}^f+\sum_{m=0}^{n}2^{-(n+1)}2^{\frac m2}\varepsilon_m^L(n,k)\theta_{m,\kappa(m;n,k)}^f.
    \]
    By Minkowski's inequality,
    \[
        \bigg(\sum_{k\in I_n}\big|f_{n+1,L}(k)\big|^r\bigg)^{\frac1r}\le \bigg(\sum_{k\in I_n}\big|2^{-(n+1)}\theta_{-1,0}^f\big|^r\bigg)^{\frac1r}+\sum_{m=0}^{n}\bigg(\sum_{k\in I_n}\Big|2^{-(n+1)}2^{\frac m2}\theta_{m,\kappa(m;n,k)}^f\Big|^r\bigg)^{\frac1r}.
    \]
    From Definition~\ref{def:level-energies}, the affine-level contribution is
    \[
        \bigg(\sum_{k\in I_n}\big|2^{-(n+1)}\theta_{-1,0}^f\big|^r\bigg)^{\frac1r}=2^{-(n+1)}2^{\frac nr}\big|\theta_{-1,0}^f\big|=\frac12K_r(n+1)\zetaen{f}{r}(-1).
    \]
    This is precisely the reason for the normalization factor $2^{1-\frac1r}$ in the affine level: with this choice, the affine contribution appears as the $m=-1$ term in the same convolutional estimate as the levels $m\ge0$.
    
    For fixed $0\le m\le n$, every coefficient $\theta_{m,\ell}^f$ appears exactly $2^{n-m}$ times as $k$ ranges over $I_n$. Thus
    \begin{align*}
        \bigg(\sum_{k\in I_n}\Big|2^{-(n+1)}2^{\frac m2}\theta_{m,\kappa(m;n,k)}^f\Big|^r\bigg)^{\frac1r}
        &=\bigg(2^{n-m}2^{-r(n+1)}2^{\frac{mr}{2}}\sum_{\ell\in I_m}\big|\theta_{m,\ell}^f\big|^r\bigg)^{\frac1r}\\
        &=\frac12K_r(n-m)\zetaen{f}{r}(m).
    \end{align*}
    Summing over $m = -1, 0, \dots, n$ gives
    \[
        \bigg(\sum_{k\in I_n}\big|f_{n+1,L}(k)\big|^r\bigg)^{\frac1r}\le \frac12\sum_{m=-1}^{n}K_r(n-m)\zetaen{f}{r}(m)=\frac12\,\mathcal A^{(r)}_f(n).
    \]
    The proof for right-child increments is identical, using the signs $\varepsilon_m^R(n,k)$.
\end{proof}

\begin{proposition}[One-step dyadic estimate]   \label{prop:one-step-estimate}
    Let $p,q>1$ be conjugates, and let $f,g\in\Cn$. Then, for every $n\ge0$ and every $j=0,\dots,2^n$,
    \begin{equation*}
        \big|S_{n+1}^L(t_{n,j};f,g)-S_n^L(t_{n,j};f,g)\big|\le \frac14\,\mathcal A^{(p)}_f(n) \, \mathcal A^{(q)}_g(n).
    \end{equation*}
\end{proposition}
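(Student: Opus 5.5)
The plan is to combine the one-step telescoping identity of Lemma~\ref{lem:telescoping} with Hölder's inequality and the child increment estimate of Lemma~\ref{lem:child-estimate}.

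First, Lemma~\ref{lem:telescoping} expresses the one-step refinement defect at the dyadic point $t_{n,j}$ as
\[
    S_{n+1}^L(t_{n,j};f,g)-S_n^L(t_{n,j};f,g)=\sum_{k=0}^{j-1}f_{n+1,L}(k)\,g_{n+1,R}(k).
\]
Taking absolute values and extending the sum over $k=0,\dots,j-1$ to all of $I_n$ (all terms in absolute value are nonnegative) gives an upper bound that is uniform in $j$. For $j=0$ the sum is empty, so the bound is trivial there.

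Next, since $p$ and $q$ are conjugates, Hölder's inequality yields
\[
    \sum_{k\in I_n}\big|f_{n+1,L}(k)\big|\,\big|g_{n+1,R}(k)\big|
    \le \bigg(\sum_{k\in I_n}\big|f_{n+1,L}(k)\big|^p\bigg)^{\frac1p}\bigg(\sum_{k\in I_n}\big|g_{n+1,R}(k)\big|^q\bigg)^{\frac1q}.
\]
We now invoke Lemma~\ref{lem:child-estimate} twice. With $r=p$ it bounds the left-child factor for $f$ by $\tfrac12\mathcal A^{(p)}_f(n)$. With $r=q$, in its right-child version, it bounds the factor for $g$ by $\tfrac12\mathcal A^{(q)}_g(n)$. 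Multiplying the two factors gives the constant $\tfrac14$ and the claimed estimate.

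There is no real obstacle here. The only points to check are that the estimate is uniform in $j$, which follows from dropping to the full sum over $I_n$, and that the right-child version of Lemma~\ref{lem:child-estimate} is the one applied to $g$. Both are immediate from the statements already established.
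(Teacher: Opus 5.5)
Your proposal is correct and follows the paper's proof exactly: you use the telescoping identity of Lemma~\ref{lem:telescoping}, then H\"older's inequality over the full index set $I_n$, then Lemma~\ref{lem:child-estimate} with $r=p$ for the left-child increments of $f$ and with $r=q$ for the right-child increments of $g$. You also state explicitly that extending the sum from $k<j$ to all of $I_n$ makes the bound uniform in $j$; the paper leaves this step implicit.
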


\begin{proof}
    By H\"older's inequality and Lemma~\ref{lem:telescoping},
    \[
        \big|S_{n+1}^L(t_{n,j};f,g)-S_n^L(t_{n,j};f,g)\big|\le \bigg(\sum_{k\in I_n}\big|f_{n+1,L}(k)\big|^p\bigg)^{\frac1p}\bigg(\sum_{k\in I_n}\big|g_{n+1,R}(k)\big|^q\bigg)^{\frac1q}.
    \]
    Applying Lemma~\ref{lem:child-estimate} to $f$ with exponent $p$ and to $g$ with exponent $q$ proves the estimate.
\end{proof}

\begin{remark}[Dyadic sewing viewpoint] \label{rem:dyadic-sewing-viewpoint}
    Proposition~\ref{prop:one-step-estimate} can be viewed as a dyadic sewing estimate. Set
    \[
        A^{f,g}_{s,t}:=f(s)\big(g(t)-g(s)\big), \qquad s,t\in\T, \qquad s\le t.
    \]
    Then the dyadic left sum of \eqref{def:left-Riemann-sum} can be written as
    \[
        S_n^L(t;f,g)=\sum_{k=0}^{2^nt-1}A^{f,g}_{t_{n,k},t_{n,k+1}}, \qquad t\in\T, \quad n\ge\ell(t).
    \]
    If $u=t_{n+1,2k+1}$ is the midpoint of the dyadic interval $[s,t]=[t_{n,k},t_{n,k+1}]$, then
    \[
        A^{f,g}_{s,u}+A^{f,g}_{u,t}-A^{f,g}_{s,t} = \big(f(u)-f(s)\big)\big(g(t)-g(u)\big).
    \]
    Hence, for $t\in\T_n$, the difference $S_{n+1}^L(t;f,g)-S_n^L(t;f,g)$ is obtained by summing these one-step refinement errors over the level-$n$ intervals contained in $[0,t]$. Proposition~\ref{prop:one-step-estimate} shows that this change under one dyadic refinement is controlled by the coefficient-side quantity
    \[
        \frac14\,\mathcal A_f^{(p)}(n) \, \mathcal A_g^{(q)}(n).
    \]
    This viewpoint is related in spirit to classical and stochastic sewing lemmas. In the classical sewing lemma one controls the three-point increment of a two-parameter map over arbitrary triples $s<u<t$, while stochastic sewing results replace deterministic estimates by moment and conditional-expectation bounds; see, for example, the fractional Brownian applications in \cite{MatsudaPerkowski2024}. The present construction is more rigid and deterministic: we sew only along the canonical dyadic refinement associated with the Faber--Schauder system, and the one-step refinement error is controlled directly by Faber--Schauder level energies, without using filtrations, adaptedness, or probabilistic cancellation.
\end{remark}

The next theorem states that if these one-step refinement errors are summable over the dyadic levels, then the dyadic integral of \eqref{def:dyadic-integral-on-T} is well-defined.

\begin{theorem}[Dyadic integrals under coefficient summability] \label{thm:coefficient-sum}
    Let $p,q>1$ be conjugates, and let $f,g\in\Cn$. If
    \begin{equation}\label{eq:coefficient-summability}
        \sum_{n=0}^{\infty}\mathcal A^{(p)}_f(n) \, \mathcal A^{(q)}_g(n) < \infty,
    \end{equation}
    then for every dyadic point $t\in\T$, the dyadic integral $\int_0^t f\,dg$ exists.
\end{theorem}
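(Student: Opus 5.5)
Fix $t\in\T$ and set $n_0:=\ell(t)$. The plan is to show that $\big(S_n^L(t;f,g)\big)_{n\ge n_0}$ is Cauchy by writing it as a telescoping series whose increments are dominated termwise by the summable sequence in \eqref{eq:coefficient-summability}. The only subtlety is that Proposition~\ref{prop:one-step-estimate} is stated for endpoints of the form $t_{n,j}$ at level $n$, so we first check that $t$ has this form at every level $n\ge n_0$.

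Since $t\in\T_{n_0}\subset\T_n$ for all $n\ge n_0$, for each such $n$ we can write $t=t_{n,j_n}$ with $j_n:=2^nt\in\{0,\dots,2^n\}$. Moreover, $S_n^L(t;f,g)$ is exactly the level-$n$ left sum up to the index $j_n$, as in \eqref{def:left-Riemann-sum}. Proposition~\ref{prop:one-step-estimate}, applied with $j=j_n$, then gives for every $n\ge n_0$
\[
\big|S_{n+1}^L(t;f,g)-S_n^L(t;f,g)\big|\le \tfrac14\,\mathcal A^{(p)}_f(n)\,\mathcal A^{(q)}_g(n).
\]
Here we use that $S_{n+1}^L(t_{n,j};f,g)$ in the proposition is the level-$(n+1)$ sum up to the point $t_{n,j}=t_{n+1,2j}$, which coincides with $S_{n+1}^L(t;f,g)$.

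For $N>M\ge n_0$, telescoping and the triangle inequality give
\[
|S_N^L(t;f,g)-S_M^L(t;f,g)|\le \tfrac14\sum_{n=M}^{N-1}\mathcal A^{(p)}_f(n)\,\mathcal A^{(q)}_g(n).
\]
The right-hand side is a tail of the convergent series \eqref{eq:coefficient-summability}, whose terms are nonnegative, so it tends to $0$ as $M\to\infty$. Hence the sequence is Cauchy in $\R$ and converges, and the dyadic integral $\int_0^t f\,dg$ exists in the sense of \eqref{def:dyadic-integral-on-T}.

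As a by-product, letting $N\to\infty$ in the same bound gives the quantitative estimate
\[
\Big|\int_0^t f\,dg-S_M^L(t;f,g)\Big|\le \tfrac14\sum_{n\ge M}\mathcal A^{(p)}_f(n)\,\mathcal A^{(q)}_g(n),
\]
which is uniform in $t\in\T_M$. This uniformity will be useful later when extending the integral to all of $[0,1]$.

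No step here is a real obstacle. The only care needed is the index bookkeeping: checking that $t$ is a level-$n$ grid point for every $n\ge\ell(t)$, so that the one-step estimate applies at each refinement.
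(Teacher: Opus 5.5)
Your proposal is correct and follows the paper's proof essentially verbatim: for $n\ge\ell(t)$ the point $t$ lies in $\T_n$, so Proposition~\ref{prop:one-step-estimate} bounds each refinement step, and telescoping with the summability assumption \eqref{eq:coefficient-summability} shows the left sums are Cauchy. Your added observation that the resulting tail bound is uniform over $t\in\T_M$ is also correct.
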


\begin{proof}
    Fix $t\in\T$. For $M>N\ge\ell(t)$,
    \[
        \big|S_M^L(t;f,g)-S_N^L(t;f,g)\big|\le \sum_{n=N}^{M-1}\big|S_{n+1}^L(t;f,g)-S_n^L(t;f,g)\big|.
    \]
    Since $t\in\T_n$ for every $n\ge\ell(t)$, Proposition~\ref{prop:one-step-estimate} gives
    \[
        \big|S_M^L(t;f,g)-S_N^L(t;f,g)\big|\le \frac14\sum_{n=N}^{M-1}\mathcal A^{(p)}_f(n) \, \mathcal A^{(q)}_g(n).
    \]
    By \eqref{eq:coefficient-summability}, the right-hand side tends to $0$ as $M, N \to \infty$. Hence, $\big(S_n^L(t;f,g)\big)_{n\ge\ell(t)}$ is Cauchy and therefore converges.
\end{proof}

\begin{corollary}[A separated coefficient criterion]
\label{cor:separated-coefficient-criterion}
    Let $p,q>1$ be conjugates, and let $f,g\in\Cn$. Recall Definition~\ref{def:level-energies} and suppose that
    \[
        \zetaen{f}{p}\in\ell^p(\N_{-1}), \qquad \zetaen{g}{q}\in\ell^q(\N_{-1}).
    \]
    Then for every dyadic point $t\in\T$, the dyadic integral $\int_0^t f\,dg$ exists.
\end{corollary}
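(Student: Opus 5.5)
The corollary will follow from Theorem~\ref{thm:coefficient-sum} once we verify the summability condition \eqref{eq:coefficient-summability}. The plan is to first apply H\"older's inequality with the conjugate pair $(p,q)$:
\[
    \sum_{n=0}^{\infty}\mathcal A^{(p)}_f(n)\,\mathcal A^{(q)}_g(n)\le \big\|\mathcal A^{(p)}_f\big\|_{\ell^p(\N_0)}\,\big\|\mathcal A^{(q)}_g\big\|_{\ell^q(\N_0)}.
\]
This reduces the problem to showing that each control sequence lies in the right $\ell^r$ space.

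To bound each factor, use Young's convolution inequality on $\N_0$ in the form $\|K*a\|_{\ell^r}\le\|K\|_{\ell^1}\|a\|_{\ell^r}$. Here $\mathcal A^{(p)}_f=K_p*\zetaen{f}{p}$ is a one-sided (causal) convolution, and the sequence $\zetaen{f}{p}$ is indexed from $-1$. To handle the index shift, write
\[
    \mathcal A^{(p)}_f(n)=\sum_{i=0}^{n+1}K_p(i)\,\zetaen{f}{p}(n-i),
\]
or equivalently reindex $b_m:=\zetaen{f}{p}(m-1)$ for $m\ge0$. Then $\mathcal A^{(p)}_f(n)\le (K_p*b)_{n+1}$ with the standard convolution on $\N_0$.

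Minkowski's inequality then gives
\[
    \big\|\mathcal A^{(p)}_f\big\|_{\ell^p}\le\sum_{i\ge0}K_p(i)\,\|\zetaen{f}{p}\|_{\ell^p(\N_{-1})}=\|K_p\|_{\ell^1}\|\zetaen{f}{p}\|_{\ell^p(\N_{-1})}.
\]
This is finite by \eqref{eq:kernel-lr} with $r=1$. The same argument gives $\|\mathcal A^{(q)}_g\|_{\ell^q}\le\|K_q\|_{\ell^1}\|\zetaen{g}{q}\|_{\ell^q(\N_{-1})}<\infty$, since $q>1$ ensures the geometric kernel $K_q$ is summable.

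Combining the two bounds, the series in \eqref{eq:coefficient-summability} is bounded by
\[
    \|K_p\|_{\ell^1}\|K_q\|_{\ell^1}\,\|\zetaen{f}{p}\|_{\ell^p(\N_{-1})}\,\|\zetaen{g}{q}\|_{\ell^q(\N_{-1})}<\infty.
\]
Theorem~\ref{thm:coefficient-sum} then yields existence of $\int_0^t f\,dg$ for every $t\in\T$. The only point requiring care is the bookkeeping of the $m=-1$ term in the convolution, which the reindexing above handles. No genuine obstacle is expected.
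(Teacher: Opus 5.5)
Your proof is correct and follows the paper's argument essentially step for step. The paper also reindexes $\zeta^{(p)}_f$ to a sequence on $\N_0$ and writes $\mathcal A^{(p)}_f(n)$ as the $(n+1)$-st term of a standard convolution with $K_p$. It then applies Young's convolution inequality and H\"older's inequality, and concludes via Theorem~\ref{thm:coefficient-sum}. Your inequality $\mathcal A^{(p)}_f(n)\le (K_p*b)_{n+1}$ is in fact an equality, which is harmless.
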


\begin{proof}
    Define the shifted sequence $\widetilde\zeta_f^{p}=(\widetilde\zeta_f^{p}(j))_{j\ge0}$ by
    \[
        \widetilde\zeta_f^{p}(j):=\zetaen{f}{p}(j-1), \qquad j\ge0.
    \]
    Then $\widetilde\zeta_f^{p}\in\ell^p(\N_0)$ and, for every $n\ge0$,
    \[
        \mathcal A^{(p)}_f(n)=(K_p*\widetilde\zeta_f^{p})_{n+1}.
    \]
    Hence, by Young's convolution inequality and \eqref{eq:kernel-lr},
    \begin{equation}    \label{ineq:Ap-bound}
        \Vert\mathcal A^{(p)}_f\Vert_{\ell^p(\N_0)}
        \le \Vert K_p*\widetilde\zeta_f^{p} \Vert_{\ell^p(\N_0)}
        \le \Vert K_p\Vert_{\ell^1(\N_0)} \Vert\widetilde\zeta_f^{p}\Vert_{\ell^p(\N_0)}
        = \Vert K_p\Vert_{\ell^1(\N_0)} \Vert\zetaen{f}{p}\Vert_{\ell^p(\N_{-1})}.
    \end{equation}
    Thus $\mathcal A^{(p)}_f\in\ell^p(\N_0)$. Similarly, $\mathcal A^{(q)}_g\in\ell^q(\N_0)$. Therefore, by H\"older's inequality,
    \begin{equation}    \label{ineq:A_pA_q-l1}
        \sum_{n=0}^{\infty} \mathcal A^{(p)}_f(n) \, \mathcal A^{(q)}_g(n)
        \le \Vert\mathcal A^{(p)}_f\Vert_{\ell^p(\N_0)} \, \Vert\mathcal A^{(q)}_g\Vert_{\ell^q(\N_0)} < \infty.
    \end{equation}
    The conclusion follows from Theorem~\ref{thm:coefficient-sum}.
\end{proof}

Whenever the conclusion of Theorem~\ref{thm:coefficient-sum} holds for a pair $(f,g)$, we write
\[
    I_{f,g}^{\T}(t):=\int_0^t f\,dg, \qquad t\in\T.
\]

\medskip

\subsection{Continuous extension to the whole interval}
\label{subsec:continuous-extension}

By Corollary~\ref{cor:separated-coefficient-criterion}, if $p,q>1$ are conjugates and
\[
    \zetaen{f}{p}\in\ell^p(\N_{-1}), \qquad \zetaen{g}{q}\in\ell^q(\N_{-1}),
\]
then the dyadic integral
\[
    I_{f,g}^{\T}(t):=\int_0^t f\,dg, \qquad t\in\T,
\]
is well-defined on the dyadic set $\T$. Moreover, the inequality \eqref{ineq:A_pA_q-l1} in the proof shows $\mathcal A^{(p)}_f \mathcal A^{(q)}_g \in \ell^1(\N_0)$.

The next result gives a sufficient condition under which this dyadic integral map is uniformly continuous on $\T$, and hence admits a unique continuous extension to $[0,1]$. We recall the level energies and Schauder tail sequence in Definitions~\ref{def:level-energies}, \ref{def:schauder-tail-sequence}.

\begin{theorem}[Continuous extension under a Schauder tail condition]   \label{thm:uniform-continuity}
    Let $p,q>1$ be conjugates, and let $f,g\in\Cn$. Suppose that $\zetaen{f}{p}\in\ell^p(\N_{-1})$, $\zetaen{g}{q}\in\ell^q(\N_{-1})$, and
    \begin{equation}\label{eq:Hf-condition}
        H_f\in\ell^p(\N_0).
    \end{equation}
    Then the mapping
    \[
        I_{f,g}^{\T}:\T\to\R, \qquad I_{f,g}^{\T}(t)=\int_0^t f\,dg,
    \]
    of \eqref{def:dyadic-integral-on-T} is uniformly continuous on $\T$. Thus, it admits a unique continuous extension to $[0,1]$.
\end{theorem}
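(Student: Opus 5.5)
The plan is to show that for dyadic $s<t$ with $t-s\le 2^{-N}$ we have $|I^{\T}_{f,g}(t)-I^{\T}_{f,g}(s)|\le \omega(N)$, where $\omega(N)\to0$ does not depend on $s,t$. Uniform continuity on the dense set $\T$ then gives the unique continuous extension in the usual way.

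\emph{Reduction to dyadic intervals.} First, additivity on dyadic points is immediate from Definition~\ref{def:dyadic-endpoint-sums}. For $s,t\in\T$ and $n\ge \ell(s)\vee\ell(t)$, the difference $S^L_n(t;f,g)-S^L_n(s;f,g)$ is the left sum over the level-$n$ points of $[s,t]$, so $I^{\T}_{f,g}(t)-I^{\T}_{f,g}(s)$ is the limit of these partial sums. Any interval $[s,t]$ with $t-s\le 2^{-N}$ lies in the union of at most two adjacent level-$N$ dyadic intervals. Inside each of these, $[s,t]$ splits into at most two dyadic intervals at each finer level $n\ge N$, by the standard binary decomposition. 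It therefore suffices to control, uniformly in $k$ and in the dyadic endpoints $s\le u\le t_{N,k+1}$ inside $J=I_{N,k}$, the quantity
\[
I^{\T}_{f,g}(u)-I^{\T}_{f,g}(t_{N,k})=\lim_{M\to\infty}\sum_{i} f(v_i)\big(g(v_{i+1})-g(v_i)\big),
\]
where the $v_i$ run over the level-$M$ points of $[t_{N,k},u]$.

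\emph{Splitting the integrand.} Write $f=\Pi_N f+R_N f$. Here $\Pi_N f$ is the truncation of the Faber--Schauder series at levels $\le N-1$, so it is linear on each level-$N$ interval and agrees with $f$ on $\T_N$. The remainder $R_N f=\sum_{m\ge N}\sum_k\theta^f_{m,k}e_{m,k}$ vanishes on $\T_N$ and satisfies $\sup|R_Nf|\le \tfrac12 H_f(N)$.

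\emph{The $\Pi_N f$ term.} On $J$ the function $\Pi_N f$ is affine, and the finite left sums for it converge to the Riemann--Stieltjes integral $\int \Pi_Nf\,dg$ over $[t_{N,k},u]$. Integration by parts bounds this by $2\,\|f\|_\infty\operatorname{osc}_J g$ plus $|f(t_{N,k+1})-f(t_{N,k})|\operatorname{osc}_J g$. Both tend to zero uniformly in $k$ because $f$ and $g$ are uniformly continuous.

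\emph{The $R_Nf$ term.} Here the base term $R_Nf(t_{N,k})\,\Delta g$ vanishes. Lemma~\ref{lem:telescoping}, applied inside $J$ and summed over the refinement levels $m\ge N$, writes the contribution as
\[
\sum_{m\ge N}\sum_{k'}(R_Nf)_{m+1,L}(k')\,g_{m+1,R}(k'),
\]
where for each $m$ the inner sum runs over the level-$m$ subintervals $I_{m,k'}\subset[t_{N,k},u]$; their number is at most $2^{m-N}$. By H\"older's inequality, the inner sum at level $m$ is at most $\big(\sum_{k'}|(R_Nf)_{m+1,L}(k')|^p\big)^{1/p}\cdot\frac12\mathcal A^{(q)}_g(m)$. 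As in the proof of Lemma~\ref{lem:child-estimate}, only coefficient levels $N\le j\le m$ enter $(R_Nf)_{m+1,L}$, each with size at most $2^{-(m+1)}2^{j/2}\sup_\ell|\theta^f_{j,\ell}|$. Since there are at most $2^{m-N}$ indices $k'$, this gives a local $\ell^p$ bound of the form
\[
2^{(m-N)/p}\,2^{-m}\sum_{j=N}^m 2^{j}\cdot 2^{-j/2}\sup_\ell|\theta^f_{j,\ell}|
\;\le\; C\sum_{j=N}^{m}K_p(m-j)\,2^{-j/2}\sup_\ell|\theta^f_{j,\ell}|,
\]
up to bookkeeping of the powers of $2$. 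This is a convolution of $K_p$ with the sequence $a_j:=2^{-j/2}\sup_\ell|\theta^f_{j,\ell}|\,\mathbf 1_{j\ge N}$. Young's convolution inequality then bounds the whole sum over $m\ge N$ by $C\|K_p\|_{\ell^1}\|a\|_{\ell^p_{\ge N}}\|\mathcal A^{(q)}_g\|_{\ell^q_{\ge N}}$. Since $a_j\le H_f(j)$, the factor $\|a\|_{\ell^p_{\ge N}}$ is at most $\|H_f\|_{\ell^p_{\ge N}}$, and this tends to $0$ because $H_f\in\ell^p(\N_0)$. The factor $\|\mathcal A^{(q)}_g\|_{\ell^q}$ is finite by \eqref{ineq:Ap-bound}.

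\emph{Main obstacle.} I expect the hardest step to be the exact local bookkeeping in the $R_Nf$ term: obtaining the local child-increment bound with the correct powers of $2$, so that the factor $2^{(m-N)/p}$ coming from the number of subintervals is absorbed by the kernel. If the naive sup bound loses a power, my first fallback is to sum $|(R_Nf)_{m+1,L}|$ over the full level rather than locally. In that case the $p$-norm over all of $I_m$ is controlled by the tail of $\zetaen{f}{p}$, and $H_f$ is used only for the $\sup|R_Nf|$ bound at the boundary pieces of the binary decomposition.
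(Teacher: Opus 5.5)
\textbf{There is a genuine gap in the $R_Nf$ step.} Your reduction to $I^{\T}_{f,g}(u)-I^{\T}_{f,g}(t_{N,k})$ and your treatment of the $\Pi_Nf$ term are fine. The problem is the representation
\[
I^{\T}_{R_Nf,g}(u)-I^{\T}_{R_Nf,g}(t_{N,k})=\sum_{m\ge N}\ \sum_{k':\,I_{m,k'}\subset[t_{N,k},u]}(R_Nf)_{m+1,L}(k')\,g_{m+1,R}(k'),
\]
which holds only when $u\in\T_N$. Lemma~\ref{lem:telescoping} compares level-$m$ and level-$(m+1)$ left sums only up to an endpoint in $\T_m$. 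It therefore says nothing about $[t_{N,k},u]$ at the levels $N\le m<\ell(u)$.

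If you do the bookkeeping through your own binary decomposition $[t_{N,k},u]=\bigcup_{n\ge N}[u_n,u_{n+1}]$ with $u_n:=2^{-n}\lfloor2^nu\rfloor$ (Lemma~\ref{lem:refinement-remainders}(ii)), each piece contributes a base term $R_Nf(u_n)\big(g(u_{n+1})-g(u_n)\big)$ in addition to its refinement sum. Only the $n=N$ base term vanishes, since $R_Nf$ vanishes on $\T_N$ but not on $\T_n$ for $n>N$. Already for $N=0$ and $u=\frac34$, your formula omits $R_0f(\frac12)\big(g(\frac34)-g(\frac12)\big)=\frac12\theta^f_{0,0}\big(g(\frac34)-g(\frac12)\big)$.

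\textbf{Why these terms matter, and how to handle them.} The omitted terms are where the real difficulty sits, and neither your main argument nor your fallback controls them.
\begin{itemize}
\item The sup bound $|R_Nf(u_n)|\le\frac12H_f(N)$ leaves $\frac12H_f(N)\sum_{n\ge N}|g(u_{n+1})-g(u_n)|$. The hypotheses give only $|g(u_{n+1})-g(u_n)|\le\frac12\mathcal A^{(q)}_g(n)$ with $\mathcal A^{(q)}_g\in\ell^q$, not $\ell^1$, and no tail condition is imposed on $g$. So this yields no bound uniform in $u$.
\item H\"older in $n$ cannot be applied to $R_Nf(u_n)$ itself, because it tends to $R_Nf(u)$ rather than to $0$.
\end{itemize}
The missing idea is to sum by parts against the endpoint value, writing $R_Nf(u_n)=R_Nf(u)-\big(R_Nf(u)-R_Nf(u_n)\big)$. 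The first part telescopes to $R_Nf(u)\big(g(u)-g(t_{N,k})\big)$, which is at most $\frac12H_f(N)\,\omega_g(2^{-N})$. For the second part, $u$ and $u_n$ lie in one level-$n$ interval, so
\[
|R_Nf(u)-R_Nf(u_n)|\le\tfrac12H_f(n)+\sum_{j=N}^{n-1}2^{-(n-j)}\,2^{-\frac j2}\sup_{\ell}|\theta^f_{j,\ell}|.
\]
Its $\ell^p$ norm over $n\ge N$ is at most $\frac32\|H_f\|_{\ell^p_{\ge N}}$. H\"older in the level index against $\frac12\|\mathcal A^{(q)}_g\|_{\ell^q_{\ge N}}$ then gives a bound uniform in $u$ and $k$ that tends to $0$.

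This pairing across levels is the one place where $H_f\in\ell^p(\N_0)$, rather than merely $H_f(N)\to0$, is needed. In the paper it is the combination of \eqref{eq:dyadic-approach-f-estimate} with the proof of Lemma~\ref{lem:uniform-integral-map-approx}, which works with $f(t_n)-f(t)$ directly and needs no $\Pi_Nf/R_Nf$ splitting.

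\textbf{The refinement sums need no tail condition.} Your local count does lose the factor $2^{(j-N)/p}$ you suspected. However, the full-level bound of Lemma~\ref{lem:child-estimate} applied to $R_Nf$ controls the refinement sums by $\frac14\|K_p\|_{\ell^1}\|\zetaen{f}{p}\|_{\ell^p_{\ge N}}\|\mathcal A^{(q)}_g\|_{\ell^q_{\ge N}}$, as in Lemma~\ref{lem:refinement-remainders}. As written, the proposal spends $H_f$ where it is unnecessary and omits exactly the terms that require it.
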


The role of the Schauder tail condition \eqref{eq:Hf-condition} is to control the oscillation of the integrand inside small dyadic intervals, thereby allowing the endpoint estimates to pass to a continuous extension on $[0,1]$. At this stage, the tail condition is required only for the integrand in the extension of $I_{f,g}^{\T}$; the corresponding symmetry is recovered later through integration by parts, as explained in Remark~\ref{rem:one-sided-tail-condition} below. To prove Theorem~\ref{thm:uniform-continuity}, we use the following lemmas.

\begin{lemma}[Refinement remainders]    \label{lem:refinement-remainders}
    Let $p,q>1$ be conjugates, and let $f,g\in\Cn$. Suppose that $\zetaen{f}{p}\in\ell^p(\N_{-1})$ and $\zetaen{g}{q}\in\ell^q(\N_{-1})$. Then the following estimates hold.

    \smallskip

    \noindent\emph{(i)} For every $N\ge0$ and $j=0,\dots,2^N-1$, there exists a remainder $R_{N,j}$ such that
    \[
        I_{f,g}^{\T}(t_{N,j+1})-I_{f,g}^{\T}(t_{N,j})
        = f(t_{N,j})\big(g(t_{N,j+1})-g(t_{N,j})\big)+R_{N,j},
    \]
    and
    \begin{equation}\label{eq:grid-remainder-bound}
        |R_{N,j}|\le\frac14 \big\Vert \mathcal A^{(p)}_f \, \mathcal A^{(q)}_g \big\Vert_{\ell^1_{\ge N}}.
    \end{equation}

    \smallskip

    \noindent\emph{(ii)} Fix $t\in\T$ and $N\ge0$. For $n\ge N$, set
    \[
        t_n:=2^{-n}\lfloor2^nt\rfloor.
    \]
    Then there exist remainders $R_n(t)$ such that
    \[
        I_{f,g}^{\T}(t_{n+1})-I_{f,g}^{\T}(t_n)
        = f(t_n)\big(g(t_{n+1})-g(t_n)\big)+R_n(t),
    \]
    and
    \begin{equation}\label{eq:approach-remainder-bound}
        \sum_{n=N}^{\infty}\big|R_n(t)\big| \le \frac14 \big\Vert \mathcal A^{(p)}_f \, \mathcal A^{(q)}_g \big\Vert_{\ell^1_{\ge N}}.
    \end{equation}
\end{lemma}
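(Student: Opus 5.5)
For part (i), I will define the remainder by the statement itself, $R_{N,j}:=I_{f,g}^{\T}(t_{N,j+1})-I_{f,g}^{\T}(t_{N,j})-A^{f,g}_{t_{N,j},t_{N,j+1}}$, and bound it by telescoping dyadic refinements localized to the single interval $I_{N,j}$. For $n\ge N$, both $t_{N,j}$ and $t_{N,j+1}$ lie in $\T_n$. Hence $S_n^L(t_{N,j+1};f,g)-S_n^L(t_{N,j};f,g)$ is the level-$n$ left sum over the $2^{n-N}$ level-$n$ subintervals of $I_{N,j}$. At $n=N$ this sum is exactly $f(t_{N,j})(g(t_{N,j+1})-g(t_{N,j}))$. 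Letting $n\to\infty$, which is allowed by Corollary~\ref{cor:separated-coefficient-criterion}, gives
\[
R_{N,j}=\sum_{n=N}^{\infty}\Big(D_n(t_{N,j+1})-D_n(t_{N,j})\Big), \qquad D_n(t):=S_{n+1}^L(t;f,g)-S_n^L(t;f,g).
\]
By Lemma~\ref{lem:telescoping}, each difference $D_n(t_{N,j+1})-D_n(t_{N,j})$ equals $\sum_{k}f_{n+1,L}(k)g_{n+1,R}(k)$, where $k$ runs only over the level-$n$ indices with $I_{n,k}\subset I_{N,j}$. I will then apply H\"older to this restricted sum and enlarge it to the full range $k\in I_n$. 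Lemma~\ref{lem:child-estimate} then bounds it by $\frac14\mathcal A^{(p)}_f(n)\mathcal A^{(q)}_g(n)$, exactly as in Proposition~\ref{prop:one-step-estimate}. Summing over $n\ge N$ gives \eqref{eq:grid-remainder-bound}; absolute convergence holds by \eqref{ineq:A_pA_q-l1}.

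For part (ii), I will note that $t_n\in\T_n$ and $t_{n+1}\in\T_{n+1}$ with $t_{n+1}-t_n\in\{0,2^{-(n+1)}\}$. If $t_{n+1}=t_n$, I will set $R_n(t)=0$. Otherwise $[t_n,t_{n+1}]$ is the left child $I_{n+1,2k}$ of the level-$n$ interval starting at $t_n$. I will apply the argument of part (i) to the level-$(n+1)$ interval $[t_n,t_{n+1}]=I_{n+1,2k}$. That argument, however, starts with the level-$(n+1)$ sum $f(t_n)(g(t_{n+1})-g(t_n))$ and only gives the weaker bound $\frac14\|\mathcal A_f^{(p)}\mathcal A_g^{(q)}\|_{\ell^1_{\ge n+1}}$ for each single $n$. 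Summing these bounds over $n$ does not give \eqref{eq:approach-remainder-bound}, since each level would be counted many times.

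Handling this overcounting is the main obstacle, and I plan to do it by refining the localization. The intervals $[t_n,t_{n+1}]$, $n\ge N$, are pairwise disjoint, apart from endpoints: they march up toward $t$ and are contained in $[t_N,t]$. Therefore, for a fixed refinement level $m$, the level-$m$ one-step errors $f_{m+1,L}(k)g_{m+1,R}(k)$ that enter the various $R_n(t)$ (those with $n<m$) come from distinct indices $k$. Swapping the order of summation, I will show
\[
\sum_{n\ge N}|R_n(t)|\le\sum_{m\ge N}\sum_{k\in I_m}|f_{m+1,L}(k)|\,|g_{m+1,R}(k)|.
\]
H\"older at each fixed level $m$ then gives $\sum_{k}|f_{m+1,L}(k)||g_{m+1,R}(k)|\le\frac14\mathcal A^{(p)}_f(m)\mathcal A^{(q)}_g(m)$, and \eqref{eq:approach-remainder-bound} follows. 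Finally, $t\in\T$ means $t_n=t$ for large $n$, so all sums are effectively finite in $n$. The only bookkeeping to verify is that the telescoping identity restricted to the subinterval $[t_n,t_{n+1}]$ uses exactly the level-$m$ indices inside it.
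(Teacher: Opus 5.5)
Your proposal is correct and follows essentially the same route as the paper. Both arguments telescope the dyadic refinements localized to each interval and bound each level by H\"older plus Lemma~\ref{lem:child-estimate}; in part (ii), both use the disjointness of the intervals $[t_n,t_{n+1}]$ at each fixed level $m$ to swap the order of summation and avoid overcounting.
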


\begin{proof}
    We first prove \eqref{eq:grid-remainder-bound}. Fix $N\ge0$ and $j=0,\dots,2^N-1$. Since $t_{N,j},t_{N,j+1}\in\T_N$, the level-$N$ left sum over $[t_{N,j},t_{N,j+1}]$ is exactly $f(t_{N,j})\big(g(t_{N,j+1})-g(t_{N,j})\big)$. By telescoping the refinements from level $N$ to infinity,
    \[
        I_{f,g}^{\T}(t_{N,j+1})-I_{f,g}^{\T}(t_{N,j})
        = f(t_{N,j})\big(g(t_{N,j+1})-g(t_{N,j})\big)+\sum_{m=N}^{\infty}\Delta_m(t_{N,j},t_{N,j+1}),
    \]
    where
    \[
        \Delta_m(a,b):=\big(S_{m+1}^L(b;f,g)-S_m^L(b;f,g)\big)-\big(S_{m+1}^L(a;f,g)-S_m^L(a;f,g)\big).
    \]
    By Lemma~\ref{lem:telescoping},
    \[
        \Delta_m(t_{N,j},t_{N,j+1}) = \sum_{k:\,I_{m,k}\subset[t_{N,j},t_{N,j+1}]} f_{m+1,L}(k)g_{m+1,R}(k).
    \]
    Hence, by H\"older's inequality and Lemma~\ref{lem:child-estimate},
    \[
        \big|\Delta_m(t_{N,j},t_{N,j+1})\big|
        \le \sum_{k\in I_m}\big|f_{m+1,L}(k)\big|\big|g_{m+1,R}(k)\big|
        \le \frac14 \mathcal A^{(p)}_f(m) \, \mathcal A^{(q)}_g(m).
    \]
    As in the proof of Corollary~\ref{cor:separated-coefficient-criterion}, the assumptions $\zetaen{f}{p}\in\ell^p(\N_{-1})$ and $\zetaen{g}{q}\in\ell^q(\N_{-1})$ imply $\mathcal A^{(p)}_f \mathcal A^{(q)}_g \in\ell^1(\N_0)$, therefore the series
    \[
        R_{N, j} := \sum_{m=N}^{\infty}\Delta_m(t_{N,j},t_{N,j+1})
    \]
    is absolutely convergent and \eqref{eq:grid-remainder-bound} follows.

    We next prove \eqref{eq:approach-remainder-bound}. Fix $t\in\T$ and define $t_n=2^{-n}\lfloor2^nt\rfloor$. If $t_{n+1}=t_n$, then the corresponding increment is zero, and we set $R_n(t)=0$. If $t_{n+1}>t_n$, then $[t_n,t_{n+1}]$ is one level-$(n+1)$ dyadic interval. Telescoping from level $n+1$ to infinity gives
    \[
        I_{f,g}^{\T}(t_{n+1})-I_{f,g}^{\T}(t_n)
        = f(t_n)\big(g(t_{n+1})-g(t_n)\big)+\sum_{m=n+1}^{\infty}\Delta_{m,n}(t),
    \]
    where
    \[
        \Delta_{m,n}(t):=\big(S_{m+1}^L(t_{n+1};f,g)-S_m^L(t_{n+1};f,g)\big)-\big(S_{m+1}^L(t_n;f,g)-S_m^L(t_n;f,g)\big).
    \]
    We define
    \[
        R_n(t):=\sum_{m=n+1}^{\infty}\Delta_{m,n}(t).
    \]
    For $m\ge n+1$, Lemma~\ref{lem:telescoping} gives
    \[
        \Delta_{m,n}(t)=\sum_{k:\,I_{m,k}\subset[t_n,t_{n+1}]}f_{m+1,L}(k)g_{m+1,R}(k).
    \]
    For fixed $m$, the intervals $[t_n,t_{n+1}]$, $n=N,\dots,m-1$, are pairwise disjoint up to endpoints. Therefore the corresponding index sets $\{k\in I_m:I_{m,k}\subset[t_n,t_{n+1}]\}$ are disjoint. Hence, by H\"older's inequality and Lemma~\ref{lem:child-estimate},
    \[
        \sum_{n=N}^{m-1}\big|\Delta_{m,n}(t)\big|
        \le \sum_{k\in I_m}\big|f_{m+1,L}(k)\big|\big|g_{m+1,R}(k)\big|
        \le \frac14 \mathcal A^{(p)}_f(m) \, \mathcal A^{(q)}_g(m).
    \]
    Tonelli's theorem yields
    \[
        \sum_{n=N}^{\infty}\big|R_n(t)\big|
        \le \sum_{m=N}^{\infty}\sum_{n=N}^{m-1}\big|\Delta_{m,n}(t)\big|
        \le \frac14\sum_{m=N}^{\infty} \mathcal A^{(p)}_f(m) \, \mathcal A^{(q)}_g(m) = \frac14 \big\Vert \mathcal A^{(p)}_f \, \mathcal A^{(q)}_g \big\Vert_{\ell^1_{\ge N}}.
    \]
\end{proof}

In what follows, we write $\omega_h$ for the modulus of continuity of $h\in\Cn$, namely
\[
    \omega_h(\delta):=\sup_{\substack{u,v\in[0,1]\\ |u-v|\le\delta}}\big|h(u)-h(v)\big|, \qquad \delta>0.
\]

\begin{lemma}[Dyadic estimates for the integrand]
\label{lem:dyadic-approach-f}
    Let $r>1$ and $h\in\Cn$. Suppose that $\zetaen{h}{r}\in\ell^r(\N_{-1})$ and $H_h\in\ell^r(\N_0)$. Then, for every $N\ge0$ and every $u\in[t_{N,k},t_{N,k+1}]$,
    \begin{equation}\label{ineq:two-endpoint-estimates}
        \big|h(u)-h(t_{N,k})\big| \le \mathcal A^{(r)}_h(N)+\frac12H_h(N), \qquad \big|h(u)-h(t_{N,k+1})\big| \le \mathcal A^{(r)}_h(N)+\frac12H_h(N).
    \end{equation}
    Consequently, 
    \begin{equation}    \label{ineq:oscillation-estimate}
        \omega_h(2^{-N}) \le 2\mathcal A^{(r)}_h(N)+H_h(N).
    \end{equation}

    Moreover, for $t\in\T\cap[0,1)$ and $n\ge0$, set $t_n:=2^{-n}\lfloor2^nt\rfloor$. Then, for every $N\ge0$,
    \begin{equation}\label{eq:dyadic-approach-f-estimate}
        \sup_{t\in\T\cap[0,1)} \bigg(\sum_{n=N}^{\infty}\big|h(t)-h(t_n)\big|^r\bigg)^{\frac1r}
        \le \Vert\mathcal A^{(r)}_h\Vert_{\ell^r_{\ge N}} + \frac12\Vert H_h\Vert_{\ell^r_{\ge N}}.
    \end{equation}
\end{lemma}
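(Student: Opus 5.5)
The idea is to split $h$ at $u$ into its coarse part (levels $m\le N-1$, including the affine level) and its fine part (levels $m\ge N$), using the expansion \eqref{eq:FS-full}.

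\textbf{Step 1: coarse and fine parts.} On $I_{N,k}$ the coarse part $\Pi h$ is linear. It agrees with $h$ at the two endpoints $t_{N,k},t_{N,k+1}$, because the levels $m\ge N$ vanish on $\T_N$. Hence
\[
|\Pi h(u)-h(t_{N,k})|\le |h(t_{N,k+1})-h(t_{N,k})|.
\]
The fine part at $u$ is $\sum_{m\ge N}\theta^h_{m,\kappa}e_{m,\kappa}(u)$. At each level at most one $e_{m,\cdot}$ is nonzero at $u$, and $\|e_{m,k}\|_\infty=2^{-m/2}/2$. So the fine part is bounded by $\frac12\sum_{m\ge N}2^{-m/2}\sup_k|\theta^h_{m,k}|=\frac12H_h(N)$.

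\textbf{Step 2: the coarse increment.} It remains to bound the level-$N$ increment $|h(t_{N,k+1})-h(t_{N,k})|$ by $\mathcal A^{(r)}_h(N)$. For $N\ge1$, this interval is a child of $I_{N-1,\lfloor k/2\rfloor}$. Lemma~\ref{lem:child-estimate} (a single term is bounded by the $\ell^r$ sum) gives the bound $\frac12\mathcal A^{(r)}_h(N-1)$. Since $K_r(j)\le K_r(j-1)$, we have $\mathcal A^{(r)}_h(N-1)\le 2^{1-1/r}\mathcal A^{(r)}_h(N)$, so the increment is at most $\mathcal A^{(r)}_h(N)$.

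For $N=0$ the increment is $|\theta_{-1,0}^h|=2^{1/r-1}\zetaen{h}{r}(-1)$. This is at most $K_r(1)\zetaen{h}{r}(-1)\le\mathcal A^{(r)}_h(0)$.

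Alternatively, one can redo the computation in Lemma~\ref{lem:child-estimate} directly at level $N$: each level $m\le N-1$ contributes $2^{-N}2^{m/2}|\theta|\le K_r(N-m)\zetaen{h}{r}(m)$. This gives the same bound. The estimate at the right endpoint is symmetric. For \eqref{ineq:oscillation-estimate}, take $|u-v|\le2^{-N}$: the points lie in the same or adjacent level-$N$ intervals. Route through the common grid point, or through an endpoint of the shared interval. This costs at most twice the one-sided bound.

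\textbf{Step 3: the $\ell^r$ estimate \eqref{eq:dyadic-approach-f-estimate}.} Since $t_n\le t<t_n+2^{-n}$, apply \eqref{ineq:two-endpoint-estimates} at level $n$ with $u=t$:
\[
|h(t)-h(t_n)|\le\mathcal A^{(r)}_h(n)+\tfrac12H_h(n).
\]
Then Minkowski's inequality in $\ell^r_{\ge N}$ gives the claim, uniformly in $t$.

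\textbf{Main obstacle.} I expect the main obstacle to be the constant bookkeeping in Step 2, namely checking that the level-$N$ increment is bounded by $\mathcal A^{(r)}_h(N)$ with coefficient one and no extra factor. The finiteness hypotheses are used only to ensure that the right-hand sides are finite. The identity $h(u)=\Pi h(u)+{}$(fine part) holds by uniform convergence of the Schauder expansion.
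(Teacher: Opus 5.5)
Your proposal is correct and follows the paper's proof step by step: the level-$N$ interpolant plus a Schauder tail bounded by $\frac12H_h(N)$, then routing through a common grid point for the oscillation bound, then the triangle inequality in $\ell^r_{\ge N}$. The only difference is Step 2. The paper writes the level-$N$ increment as the sum of its two level-$(N+1)$ child increments and applies Lemma~\ref{lem:child-estimate} at level $N$, which gives $\frac12\mathcal A^{(r)}_h(N)+\frac12\mathcal A^{(r)}_h(N)$ at once with no separate $N=0$ case. Your parent-level route also works, but the comparison $\mathcal A^{(r)}_h(N-1)\le 2^{1-1/r}\mathcal A^{(r)}_h(N)$ does not follow from the monotonicity $K_r(j)\le K_r(j-1)$, which points the wrong way. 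It follows from the exact identity $K_r(j-1)=2^{1-1/r}K_r(j)$ together with dropping the nonnegative $m=N$ term.
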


\begin{proof}
    Fix $N\ge0$ and $u\in[t_{N,k},t_{N,k+1}]$. Consider the level-$N$ dyadic interpolation
    \[
        \Pi_Nh(v):=h(0)+\sum_{m=-1}^{N-1}\sum_{\ell\in I_m}\theta_{m,\ell}^h e_{m,\ell}(v), \qquad v\in[0,1].
    \]
    Since the Schauder tail vanishes at level-$N$ dyadic points,
    \[
        \Pi_Nh(t_{N,k})=h(t_{N,k}), \qquad \Pi_Nh(t_{N,k+1})=h(t_{N,k+1}).
    \]
    Moreover, $\Pi_Nh$ is affine on $[t_{N,k},t_{N,k+1}]$. Hence
    \[
        \big|\Pi_Nh(u)-\Pi_Nh(t_{N,k})\big|
        \le \big|\Pi_Nh(t_{N, k+1})-\Pi_Nh(t_{N,k})\big| = \big|h(t_{N,k+1})-h(t_{N,k})\big|,
    \]
    and the same estimate holds with $t_{N,k}$ replaced by $t_{N,k+1}$. The increment $h(t_{N,k+1})-h(t_{N,k})$ is the sum of the two child increments over the level-$N$ interval, so Lemma~\ref{lem:child-estimate} gives
    \[
        \big|h(t_{N,k+1})-h(t_{N,k})\big| \le \mathcal A^{(r)}_h(N).
    \]
    For the tail term, at each level $m\ge N$, at most one Faber--Schauder tent is nonzero at $u$, and its height is at most $2^{-\frac m2-1}$. Therefore
    \begin{equation}    \label{ineq:N-dyadic-interpol}
        \big|h(u)-\Pi_Nh(u)\big| = \bigg|\sum_{m=N}^{\infty}\sum_{\ell \in I_m} \theta_{m,\ell}^h \, e_{m,\ell}(u)\bigg|
        \le \frac12\sum_{m=N}^{\infty}2^{-\frac m2}\sup_{\ell\in I_m}\big|\theta_{m,\ell}^h\big|
        = \frac12H_h(N).
    \end{equation}
    Combining these estimates proves the two estimates \eqref{ineq:two-endpoint-estimates}.

    If $|u-v|\le2^{-N}$, then $u$ and $v$ lie either in the same level-$N$ dyadic interval or in two adjacent level-$N$ dyadic intervals. In both cases, there is a level-$N$ dyadic point $z$ such that $u$ and $v$ each lie in a level-$N$ interval having $z$ as one endpoint. Hence
    \[
        \big|h(u)-h(v)\big| \le 2\mathcal A^{(r)}_h(N)+H_h(N).
    \]
    Taking the supremum over all such $u,v$ proves the oscillation estimate \eqref{ineq:oscillation-estimate}.

    Now fix $t\in\T\cap[0,1)$ and $n\ge0$. Applying the first endpoint estimate at level $n$ to the interval containing $t$ gives
    \[
        \big|h(t)-h(t_n)\big| \le \mathcal A^{(r)}_h(n)+\frac12H_h(n).
    \]
    Taking the $\ell^r$ norm over $n\ge N$ and then the supremum over $t\in\T\cap[0,1)$ proves \eqref{eq:dyadic-approach-f-estimate}.
\end{proof}

\begin{lemma}[Uniform dyadic approximation of the integral map]
\label{lem:uniform-integral-map-approx}
    Let $p,q>1$ be conjugates, and let $f,g\in\Cn$. Suppose that $\zetaen{f}{p}\in\ell^p(\N_{-1})$, $\zetaen{g}{q}\in\ell^q(\N_{-1})$, and $H_f\in\ell^p(\N_0)$. Define
    \[
        F(t):=I_{f,g}^{\T}(t), \qquad t\in\T,
    \]
    and, for $N\ge0$,
    \[
        t_N:=2^{-N}\lfloor2^Nt\rfloor, \qquad F_N(t):=F(t_N), \qquad t\in\T.
    \]
    Then $F_N\to F$ uniformly on $\T$, i.e.,
    \[
        \sup_{t\in\T}\big|F(t)-F_N(t)\big|\xrightarrow{N\to\infty}0.
    \]
\end{lemma}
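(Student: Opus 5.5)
We will write $F(t)-F_N(t)$ as a telescoping sum along the dyadic approximations of $t$ and control it with Lemma~\ref{lem:refinement-remainders}(ii) and Lemma~\ref{lem:dyadic-approach-f}.

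Fix $t\in\T$. If $t=1$ then $t_N=1$ and there is nothing to prove, so assume $t\in\T\cap[0,1)$. Since $t\in\T$, we have $t_n=t$ for all $n\ge\ell(t)$. Hence
\[
    F(t)-F_N(t)=\sum_{n=N}^{\infty}\big(F(t_{n+1})-F(t_n)\big),
\]
a finite sum. By Lemma~\ref{lem:refinement-remainders}(ii), each term equals $f(t_n)\big(g(t_{n+1})-g(t_n)\big)+R_n(t)$. The remainders contribute at most $\frac14\Vert\mathcal A^{(p)}_f\mathcal A^{(q)}_g\Vert_{\ell^1_{\ge N}}$, uniformly in $t$, and this tends to $0$ by \eqref{ineq:A_pA_q-l1}.

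The main term $\sum_{n\ge N}f(t_n)\big(g(t_{n+1})-g(t_n)\big)$ cannot be bounded termwise, since $\sum_n|g(t_{n+1})-g(t_n)|$ is not controlled. Instead we subtract the constant $f(t)$ and write
\[
    \sum_{n\ge N}f(t_n)\big(g(t_{n+1})-g(t_n)\big)=f(t)\big(g(t)-g(t_N)\big)-\sum_{n\ge N}\big(f(t)-f(t_n)\big)\big(g(t_{n+1})-g(t_n)\big).
\]

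The first term is bounded by $\|f\|_\infty\,\omega_g(2^{-N})$, which tends to $0$ uniformly in $t$ because $g$ is uniformly continuous.

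For the second term we apply H\"older's inequality with exponents $p,q$:
\[
    \Big(\sum_{n\ge N}|f(t)-f(t_n)|^p\Big)^{1/p}\Big(\sum_{n\ge N}|g(t_{n+1})-g(t_n)|^q\Big)^{1/q}.
\]
The first factor is at most $\Vert\mathcal A^{(p)}_f\Vert_{\ell^p_{\ge N}}+\frac12\Vert H_f\Vert_{\ell^p_{\ge N}}$ by \eqref{eq:dyadic-approach-f-estimate}. Both sequences lie in $\ell^p$, by \eqref{ineq:Ap-bound} and the hypothesis $H_f\in\ell^p(\N_0)$, so this factor tends to $0$. This is the only place where the tail condition on $f$ is used.

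For the second factor, $[t_n,t_{n+1}]$ is either degenerate or a single level-$(n+1)$ dyadic interval, which is a child of a level-$n$ interval. Lemma~\ref{lem:child-estimate} therefore gives $|g(t_{n+1})-g(t_n)|\le\frac12\mathcal A^{(q)}_g(n)$. Consequently the second factor is at most $\frac12\Vert\mathcal A^{(q)}_g\Vert_{\ell^q(\N_0)}<\infty$, again by \eqref{ineq:Ap-bound}.

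All bounds are independent of $t$, so taking the supremum over $t\in\T$ and letting $N\to\infty$ gives the claim. The main obstacle is the main Riemann-sum term: it must be reorganized around $f(t)$ so that H\"older's inequality pairs the tail of $f$ (in $\ell^p$) with the increments of $g$ along the dyadic approach (in $\ell^q$). Everything else is supplied by the earlier lemmas.
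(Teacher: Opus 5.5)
Your proposal is correct and follows the paper's own proof: telescoping along $t_n$, Lemma~\ref{lem:refinement-remainders}(ii) for the remainders, recentring the Riemann-sum term at $f(t)$, and a H\"older pairing of \eqref{eq:dyadic-approach-f-estimate} with the child-increment bound $|g(t_{n+1})-g(t_n)|\le\frac12\mathcal A^{(q)}_g(n)$, with the remaining term bounded by $\|f\|_\infty\omega_g(2^{-N})$. The only difference is cosmetic: you bound the $g$-factor by the full norm $\frac12\Vert\mathcal A^{(q)}_g\Vert_{\ell^q(\N_0)}$ rather than its tail, which is harmless because the $f$-factor already tends to zero.
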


\begin{proof}
    The claim is trivial at $t=1$, since $t_N=1$ for all $N$. Fix $t\in\T\cap[0,1)$. Since $t_n=2^{-n}\lfloor2^nt\rfloor$ satisfies $t_n\uparrow t$ and $t_n=t$ for all sufficiently large $n$, we have
    \begin{equation*}
        F(t)-F_N(t)=\sum_{n=N}^{\infty}\big(F(t_{n+1})-F(t_n)\big).
    \end{equation*}
    By Lemma~\ref{lem:refinement-remainders} (ii),
    \[
        F(t_{n+1})-F(t_n)=f(t_n)\big(g(t_{n+1})-g(t_n)\big)+R_n(t),
    \]
    with
    \[
        \sum_{n=N}^{\infty} \big|R_n(t)\big| \le \frac14 \big\Vert \mathcal A^{(p)}_f \, \mathcal A^{(q)}_g \big\Vert_{\ell^1_{\ge N}}.
    \]
    Therefore
    \[
        F(t)-F(t_N)-f(t)\big(g(t)-g(t_N)\big) = \sum_{n=N}^{\infty}\big(f(t_n)-f(t)\big)\big(g(t_{n+1})-g(t_n)\big)+\sum_{n=N}^{\infty}R_n(t).
    \]
    Each nonzero increment $g(t_{n+1})-g(t_n)$ is a left-child increment of $g$ at level $n+1$. Hence Lemma~\ref{lem:child-estimate} gives
    \[
        \big|g(t_{n+1})-g(t_n)\big|\le \frac12\mathcal A^{(q)}_g(n).
    \]
    By H\"older's inequality and Lemma~\ref{lem:dyadic-approach-f},
    \[
        \sum_{n=N}^{\infty}\big|f(t_n)-f(t)\big|\big|g(t_{n+1})-g(t_n)\big|\le \frac12\Vert\mathcal A^{(q)}_g\Vert_{\ell^q_{\ge N}}\bigg(\Vert\mathcal A^{(p)}_f\Vert_{\ell^p_{\ge N}}+\frac12\Vert H_f\Vert_{\ell^p_{\ge N}}\bigg).
    \]
    Combining the last two estimates proves for every $N\ge0$ and every $t\in\T$,
    \begin{align}
        \Big|F(t)-F(t_N)&-f(t)\big(g(t)-g(t_N)\big)\Big| \label{eq:projection-bound-on-T}
        \\
        &\le \frac12\Vert\mathcal A^{(q)}_g\Vert_{\ell^q_{\ge N}}\bigg(\Vert\mathcal A^{(p)}_f\Vert_{\ell^p_{\ge N}}+\frac12\Vert H_f\Vert_{\ell^p_{\ge N}}\bigg)+\frac14 \big\Vert \mathcal A^{(p)}_f \, \mathcal A^{(q)}_g \big\Vert_{\ell^1_{\ge N}}.    \nonumber
    \end{align}
    Finally,
    \[
        |F(t)-F_N(t)|\le \big|F(t)-F(t_N)-f(t)\big(g(t)-g(t_N)\big)\big|+\|f\|_{\infty}\omega_g(2^{-N}).
    \]
    Taking the supremum over $t\in\T$ gives
    \begin{align*}
        \sup_{t\in\T}\big|F(t)&-F_N(t)\big|
        \\
        &\le \frac12\Vert \mathcal A^{(q)}_g \Vert_{\ell^q_{\ge N}}\bigg(\Vert \mathcal A^{(p)}_f \Vert_{\ell^p_{\ge N}}+\frac12\Vert H_f\Vert_{\ell^p_{\ge N}}\bigg) + \frac14 \big\Vert \mathcal A^{(p)}_f \, \mathcal A^{(q)}_g \big\Vert_{\ell^1_{\ge N}} + \|f\|_{\infty}\omega_g(2^{-N}).
    \end{align*}
    The right-hand side converges to $0$ as $N\to\infty$, since $\mathcal A^{(p)}_f\in\ell^p$, $\mathcal A^{(q)}_g \in\ell^q$, $H_f\in\ell^p$, $\mathcal A^{(p)}_f \mathcal A^{(q)}_g\in\ell^1$, and $g$ is uniformly continuous.    
\end{proof}

\begin{lemma}[Vanishing jumps on dyadic grids]
\label{lem:vanishing-grid-jumps}
    Let $p,q>1$ be conjugates, and let $f,g\in\Cn$. Suppose that $\zetaen{f}{p}\in\ell^p(\N_{-1})$ and $\zetaen{g}{q}\in\ell^q(\N_{-1})$. Recall $F(t):=I_{f,g}^{\T}(t)$ for $t\in\T$, then 
    \[
        \max_{0\le j<2^N}\big|F(t_{N,j+1})-F(t_{N,j})\big|\xrightarrow{N\to\infty}0.
    \]
\end{lemma}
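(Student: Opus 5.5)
Use Lemma~\ref{lem:refinement-remainders}~(i). For each $N\ge0$ and $0\le j<2^N$ it gives
\[
    F(t_{N,j+1})-F(t_{N,j}) = f(t_{N,j})\big(g(t_{N,j+1})-g(t_{N,j})\big)+R_{N,j},
    \qquad
    |R_{N,j}|\le\frac14\big\Vert\mathcal A^{(p)}_f\,\mathcal A^{(q)}_g\big\Vert_{\ell^1_{\ge N}}.
\]
The remainder bound does not depend on $j$. It is the tail of a convergent series, since inequality \eqref{ineq:A_pA_q-l1} in the proof of Corollary~\ref{cor:separated-coefficient-criterion} shows $\mathcal A^{(p)}_f\mathcal A^{(q)}_g\in\ell^1(\N_0)$. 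Hence $\max_j|R_{N,j}|\to0$ as $N\to\infty$.

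For the main term, continuity of $f$ on $[0,1]$ gives $\|f\|_\infty<\infty$, so
\[
    \max_j\big|f(t_{N,j})\big(g(t_{N,j+1})-g(t_{N,j})\big)\big|\le\|f\|_\infty\,\omega_g(2^{-N}).
\]
This tends to $0$ because $g$ is uniformly continuous on $[0,1]$. No Schauder tail condition is needed here.

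Alternatively, one can avoid the modulus of continuity and stay on the coefficient side. The increment $g(t_{N,j+1})-g(t_{N,j})$ is the sum of the two child increments of $I_{N,j}$, so Lemma~\ref{lem:child-estimate} gives $|g(t_{N,j+1})-g(t_{N,j})|\le\mathcal A^{(q)}_g(N)$. This tends to $0$ because $\mathcal A^{(q)}_g\in\ell^q$ by \eqref{ineq:Ap-bound}.

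Combining the two bounds gives
\[
    \max_{0\le j<2^N}\big|F(t_{N,j+1})-F(t_{N,j})\big|\le\|f\|_\infty\,\omega_g(2^{-N})+\frac14\big\Vert\mathcal A^{(p)}_f\,\mathcal A^{(q)}_g\big\Vert_{\ell^1_{\ge N}}\longrightarrow0.
\]
I expect no real obstacle. The only point to check is that the remainder estimate of Lemma~\ref{lem:refinement-remainders}~(i) is uniform in $j$, which it is, since it sums over all of $I_m$.
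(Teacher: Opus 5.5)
Your proof is correct and follows the paper's argument exactly: it uses the decomposition from Lemma~\ref{lem:refinement-remainders}~(i), bounds the main term by $\|f\|_\infty\,\omega_g(2^{-N})$, and bounds the remainder uniformly in $j$ by the tail $\frac14\|\mathcal A^{(p)}_f\mathcal A^{(q)}_g\|_{\ell^1_{\ge N}}$. Your coefficient-side alternative, which bounds the increment by $\mathcal A^{(q)}_g(N)$ via Lemma~\ref{lem:child-estimate}, is also valid but not needed.
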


\begin{proof}
    By Lemma~\ref{lem:refinement-remainders} (i),
    \[
        F(t_{N,j+1})-F(t_{N,j})=f(t_{N,j})\big(g(t_{N,j+1})-g(t_{N,j})\big)+R_{N,j},
    \]
    where
    \[
        |R_{N,j}|\le\frac14 \big\Vert \mathcal A^{(p)}_f \, \mathcal A^{(q)}_g \big\Vert_{\ell^1_{\ge N}}.
    \]
    The first term is bounded by $\|f\|_{\infty}\omega_g(2^{-N})$. Taking the maximum over $j$ gives 
    \begin{equation*}
        \max_{0\le j<2^N}\big|F(t_{N,j+1})-F(t_{N,j})\big|\le\|f\|_{\infty}\omega_g(2^{-N})+\frac14 \big\Vert \mathcal A^{(p)}_f \, \mathcal A^{(q)}_g \big\Vert_{\ell^1_{\ge N}}.
    \end{equation*}
    The right-hand side tends to zero because $g$ is uniformly continuous and $\mathcal A^{(p)}_f \mathcal A^{(q)}_g\in\ell^1$.
\end{proof}

We are now ready to prove Theorem~\ref{thm:uniform-continuity}.

\begin{proof}[Proof of Theorem~\ref{thm:uniform-continuity}]
    For $t\in\T$, set
    \[
        F(t):=I_{f,g}^{\T}(t), \qquad F_N(t):=F(2^{-N}\lfloor2^Nt\rfloor).
    \]
    By Lemma~\ref{lem:uniform-integral-map-approx},
    \[
        \sup_{u\in\T}\big|F(u)-F_N(u)\big|\xrightarrow{N\to\infty}0.
    \]
    By Lemma~\ref{lem:vanishing-grid-jumps},
    \[
        \max_{0\le j<2^N}\big|F(t_{N,j+1})-F(t_{N,j})\big|\xrightarrow{N\to\infty}0.
    \]
    Let $\varepsilon>0$. Choose $N$ large enough so that
    \[
        2\sup_{u\in\T}\big|F(u)-F_N(u)\big|+\max_{0\le j<2^N}\big|F(t_{N,j+1})-F(t_{N,j})\big|<\varepsilon.
    \]
    If $s,t\in\T$ and $|s-t|<2^{-N}$, then
    \[
        s_N:=2^{-N}\lfloor2^Ns\rfloor, \qquad t_N:=2^{-N}\lfloor2^Nt\rfloor
    \]
    are either equal or adjacent level-$N$ dyadic points. Hence
    \[
        \big|F_N(s)-F_N(t)\big|=\big|F(s_N)-F(t_N)\big|\le\max_{0\le j<2^N}\big|F(t_{N,j+1})-F(t_{N,j})\big|.
    \]
    Therefore
    \[
        \big|F(s)-F(t)\big|\le\big|F(s)-F_N(s)\big|+\big|F_N(s)-F_N(t)\big|+\big|F_N(t)-F(t)\big|<\varepsilon.
    \]
    This proves uniform continuity of $F=I_{f,g}^{\T}$ on $\T$. Since $\T$ is dense in $[0,1]$ and $\R$ is complete, the standard extension theorem for uniformly continuous maps on dense subsets gives a unique continuous extension of $F$ to $[0,1]$.
\end{proof}

Whenever the assumptions of Theorem~\ref{thm:uniform-continuity} hold, we can define the dyadic integral on the whole interval $[0, 1]$.

\begin{definition}[Faber--Schauder integral on the unit interval]   \label{def:FS-integral-unit-interval}
    Let $f,g\in\Cn$. Suppose that the dyadic integral $I_{f,g}^{\T}$ is well-defined on $\T$ and admits a unique continuous extension to $[0,1]$. We denote this extension by
    \[
        I_{f,g}:[0,1]\to\R
    \]
    and say that the \emph{Faber--Schauder integral} exists on $[0,1]$.
\end{definition}

We conclude this subsection with the following corollary, which provides a simpler sufficient condition for the Schauder tail assumption \eqref{eq:Hf-condition}.

\begin{corollary}[A weighted sufficient condition for continuous extension]
\label{cor:weighted-boundary-tail}
    Let $p,q>1$ be conjugates, and let $f,g\in\Cn$. Suppose that $\zetaen{f}{p}\in\ell^p(\N_{-1})$, $\zetaen{g}{q}\in\ell^q(\N_{-1})$, and
    \begin{equation}\label{eq:weighted-tail-condition}
        \sum_{m=1}^{\infty}m^{\frac1p}2^{-\frac m2}\sup_{k\in I_m}\big|\theta_{m,k}^f\big|<\infty.
    \end{equation}
    Then $H_f\in\ell^p(\N_0)$. Consequently, the unique continuous extension $I_{f, g}$ of $I_{f,g}^{\T}$ exists on $[0, 1]$.
\end{corollary}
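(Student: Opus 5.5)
The plan is to show $\sum_{n\ge0}H_f(n)^p<\infty$ from the weighted condition \eqref{eq:weighted-tail-condition}, and then invoke Theorem~\ref{thm:uniform-continuity}. Write $a_m:=2^{-\frac m2}\sup_{k\in I_m}|\theta^f_{m,k}|\ge0$, so that $H_f(n)=\sum_{m\ge n}a_m$. The hypothesis says $\sum_{m\ge1}m^{1/p}a_m<\infty$. The term $m=0$ needs separate care: $a_0=|\theta^f_{0,0}|$ is finite, so it only affects $H_f(0)$ by a finite amount. Hence it suffices to bound $\sum_{n\ge1}H_f(n)^p$.

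The key step is a Minkowski-type inequality for $\ell^{1}$ versus $\ell^p$ with $p>1$. We view $H_f(n)^p$ as
\[
    H_f(n)^p=\Big(\sum_{m\ge n}a_m\Big)^p ,
\]
and use the elementary fact that for nonnegative $a_m$,
\[
    \Big(\sum_{n\ge1}\Big(\sum_{m\ge n}a_m\Big)^p\Big)^{1/p}\le \sum_{m\ge1}\Big(\sum_{n=1}^{m}a_m^p\Big)^{1/p}=\sum_{m\ge1}m^{1/p}a_m .
\]
This is exactly Minkowski's inequality in $\ell^p(\N)$. Write the sequence $(H_f(n))_{n\ge1}$, restricted to $m\ge1$, as the sum over $m\ge1$ of the sequences $b^{(m)}:=a_m\mathbf 1_{\{1\le n\le m\}}$. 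Each of these has $\ell^p$-norm $a_m m^{1/p}$. The triangle inequality for countable sums of nonnegative sequences in $\ell^p$ then gives the display. If one prefers to avoid infinite sums, apply Minkowski to the truncations $\sum_{m=1}^{M}b^{(m)}$ and let $M\to\infty$ by monotone convergence.

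Combining the two steps gives $\|H_f\|_{\ell^p(\N_0)}\le H_f(0)+\sum_{m\ge1}m^{1/p}a_m<\infty$. Here $H_f(0)=a_0+H_f(1)$ is finite because $H_f(1)\le\sum_{m\ge1}m^{1/p}a_m$. So $H_f\in\ell^p(\N_0)$. Together with $\zetaen{f}{p}\in\ell^p(\N_{-1})$ and $\zetaen{g}{q}\in\ell^q(\N_{-1})$, Theorem~\ref{thm:uniform-continuity} yields the unique continuous extension $I_{f,g}$.

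I expect no serious obstacle. The only points needing care are setting up the Minkowski decomposition correctly, namely recognising that the weight $m^{1/p}$ is precisely the $\ell^p$-norm of the indicator of $\{1,\dots,m\}$, and the bookkeeping of the $m=0$ level, which the hypothesis omits.
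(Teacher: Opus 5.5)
Your proposal is correct and follows the paper's proof: both write $H_f$ as a superposition of the numbers $2^{-m/2}\sup_{k}|\theta^f_{m,k}|$ times indicators of initial segments, apply Minkowski's inequality in $\ell^p$, and then invoke Theorem~\ref{thm:uniform-continuity}. The only difference is the bookkeeping at level $0$: the paper uses the indicators $\mathbf 1_{\{0,\dots,m\}}$ on $\N_0$ and then bounds $(m+1)^{1/p}\le 2^{1/p}m^{1/p}$ for $m\ge1$, whereas you split off $n=0$ first and obtain the weight $m^{1/p}$ exactly.
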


\begin{proof}
    By Minkowski's inequality,
    \begin{align*}
        \Vert H_f\Vert_{\ell^p}
        &=
        \bigg\Vert\sum_{m=0}^{\infty} 2^{-\frac m2}\sup_{k\in I_m}\big|\theta_{m,k}^f\big| \mathbf 1_{\{0,\dots,m\}}\bigg\Vert_{\ell^p}
        \\
        &\le
        \sum_{m=0}^{\infty} \Big( 2^{-\frac m2}\sup_{k\in I_m}\big|\theta_{m,k}^f\big| \Big) \big\Vert\mathbf 1_{\{0,\dots,m\}}\big\Vert_{\ell^p}
        = \sum_{m=0}^{\infty}(m+1)^{\frac1p} 2^{-\frac m2}\sup_{k\in I_m}\big|\theta_{m,k}^f\big|.
    \end{align*}
    The $m=0$ term is finite, and $(m+1)^{\frac1p}\le2^{\frac1p}m^{\frac1p}$ holds for $m\ge1$. Hence \eqref{eq:weighted-tail-condition} implies $\Vert H_f\Vert_{\ell^p}<\infty$. The conclusion follows from Theorem~\ref{thm:uniform-continuity}.
\end{proof}

\medskip

\subsection{Algebraic properties and integration by parts}
\label{subsec:algebraic-properties}

We now provide basic algebraic properties of the Faber--Schauder integrals on the unit interval. Throughout this subsection, the statement that $I_{f,g}$ exists on $[0,1]$ is understood in the sense of Definition~\ref{def:FS-integral-unit-interval}. Whenever $I_{f,g}$ exists on $[0,1]$, we define the interval integral by
\[
    \int_s^t f\,dg:=I_{f,g}(t)-I_{f,g}(s).
\]

\begin{proposition}[Bilinearity and additivity]
\label{prop:extended-bilinearity-additivity}
    Let $f_1,f_2,g\in\Cn$ and $\alpha,\beta\in\R$. If the Faber--Schauder integrals $I_{f_1,g}$ and $I_{f_2,g}$ exist on $[0,1]$, then $I_{\alpha f_1+\beta f_2,g}$ also exists on $[0,1]$, and for every $t\in[0,1]$,
    \[
        I_{\alpha f_1+\beta f_2,g}(t)=\alpha I_{f_1,g}(t)+\beta I_{f_2,g}(t).
    \]
    Consequently, for every $s,t\in[0,1]$,
    \[
        \int_s^t(\alpha f_1+\beta f_2)\,dg
        = \alpha\int_s^t f_1\,dg+\beta\int_s^t f_2\,dg.
    \]

    Similarly, let $f,g_1,g_2\in\Cn$ and $\alpha,\beta\in\R$. If $I_{f,g_1}$ and $I_{f,g_2}$ exist on $[0,1]$, then $I_{f,\alpha g_1+\beta g_2}$ also exists on $[0,1]$, and for every $t\in[0,1]$
    \[
        I_{f,\alpha g_1+\beta g_2}(t)=\alpha I_{f,g_1}(t)+\beta I_{f,g_2}(t),
    \]
    and consequently, for every $s,t\in[0,1]$,
    \[
        \int_s^t f\,d(\alpha g_1+\beta g_2)
        = \alpha\int_s^t f\,dg_1+\beta\int_s^t f\,dg_2.
    \]

    Finally, if $I_{f,g}$ exists on $[0,1]$, then
    \[
        \int_r^t f\,dg=\int_r^s f\,dg+\int_s^t f\,dg, \qquad r,s,t\in[0,1].
    \]
    In particular,
    \[
        \int_s^s f\,dg=0, \qquad \int_s^t f\,dg=-\int_t^s f\,dg.
    \]
\end{proposition}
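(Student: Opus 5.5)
The plan is to work first on the dyadic set $\T$ and then pass to $[0,1]$ by density and continuity. The key observation is that the left Riemann sums in Definition~\ref{def:dyadic-endpoint-sums} are bilinear in $(f,g)$ for each fixed $t\in\T$ and $n\ge\ell(t)$:
\[
    S_n^L(t;\alpha f_1+\beta f_2,g)=\alpha S_n^L(t;f_1,g)+\beta S_n^L(t;f_2,g),
\]
and similarly in the second argument. This is immediate from \eqref{def:left-Riemann-sum}, since each summand $f(t_{n,k})(g(t_{n,k+1})-g(t_{n,k}))$ is bilinear.

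Existence of $I_{f_1,g}$ and $I_{f_2,g}$ in the sense of Definition~\ref{def:FS-integral-unit-interval} means in particular that the dyadic integrals $I^{\T}_{f_i,g}(t)=\lim_n S_n^L(t;f_i,g)$ exist for every $t\in\T$. Taking limits in the identity above shows that $S_n^L(t;\alpha f_1+\beta f_2,g)$ converges for every $t\in\T$. Hence $I^{\T}_{\alpha f_1+\beta f_2,g}$ is well defined and equals $\alpha I^{\T}_{f_1,g}+\beta I^{\T}_{f_2,g}$ on $\T$.

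Next I extend to $[0,1]$. The function $\alpha I_{f_1,g}+\beta I_{f_2,g}$ is continuous on $[0,1]$ and agrees with $I^{\T}_{\alpha f_1+\beta f_2,g}$ on $\T$. So $I^{\T}_{\alpha f_1+\beta f_2,g}$ admits a continuous extension. Because $\T$ is dense, a continuous extension is unique: two continuous functions agreeing on a dense set coincide. This gives existence of $I_{\alpha f_1+\beta f_2,g}$ together with the identity $I_{\alpha f_1+\beta f_2,g}=\alpha I_{f_1,g}+\beta I_{f_2,g}$ on $[0,1]$. The interval statement then follows by subtracting the values at $t$ and $s$. The argument for linearity in $g$ is identical, using linearity of the sums in the integrator.

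Additivity is purely algebraic once $I_{f,g}$ exists. By the definition $\int_s^t f\,dg=I_{f,g}(t)-I_{f,g}(s)$, the difference telescopes:
\[
    (I_{f,g}(s)-I_{f,g}(r))+(I_{f,g}(t)-I_{f,g}(s))=I_{f,g}(t)-I_{f,g}(r).
\]
Setting $t=s$ gives $\int_s^s f\,dg=0$, and setting $t=r$ gives the antisymmetry $\int_s^t f\,dg=-\int_t^s f\,dg$.

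There is no genuine obstacle here. The only point needing care is that the existence hypothesis is the abstract one of Definition~\ref{def:FS-integral-unit-interval}, not the energy conditions of Theorem~\ref{thm:uniform-continuity}. I will therefore argue existence directly through limits of the sums and uniqueness of continuous extensions from the dense set $\T$, rather than by checking any energy-space or Schauder-tail conditions for $\alpha f_1+\beta f_2$.
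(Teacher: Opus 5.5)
Your proposal is correct and follows the paper's proof essentially step for step: you use bilinearity of the dyadic left sums on $\T$, identify $\alpha I_{f_1,g}+\beta I_{f_2,g}$ as the continuous extension, and get uniqueness from density of $\T$. Additivity and the two special identities then follow directly from the definition $\int_s^t f\,dg=I_{f,g}(t)-I_{f,g}(s)$.
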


\begin{proof}
    For $t\in\T$, bilinearity of the finite dyadic left sums gives
    \[
        S_n^L(t;\alpha f_1+\beta f_2,g)
        = \alpha S_n^L(t;f_1,g)+\beta S_n^L(t;f_2,g).
    \]
    Letting $n\to\infty$ shows that $I_{\alpha f_1+\beta f_2,g}^{\T}(t)$ exists and satisfies
    \[
        I_{\alpha f_1+\beta f_2,g}^{\T}(t)
        = \alpha I_{f_1,g}^{\T}(t)+\beta I_{f_2,g}^{\T}(t),
        \qquad t\in\T.
    \]
    The right-hand side has a continuous extension to $[0,1]$, namely $\alpha I_{f_1,g}+\beta I_{f_2,g}$. Hence, $I_{\alpha f_1+\beta f_2,g}$ exists on $[0,1]$ and the identity holds everywhere on $[0, 1]$ by density of $\T$.

    The proof of linearity in the second variable is identical, using the linearity of the finite dyadic left sums in the second variable.

    The bilinearity identities for interval integrals follow by subtracting the corresponding identities at $s$ and at $t$. Additivity follows directly from the definition of the interval integral:
    \[
        \int_r^s f\,dg+\int_s^t f\,dg = \big(I_{f,g}(s)-I_{f,g}(r)\big)+\big(I_{f,g}(t)-I_{f,g}(s)\big) = I_{f,g}(t)-I_{f,g}(r)= \int_r^t f\,dg.
    \]
    The identities $\int_s^s f\,dg=0$ and $\int_s^t f\,dg=-\int_t^s f\,dg$ are immediate.
\end{proof}

\begin{theorem}[Integration by parts]   \label{thm:extended-integration-by-parts}
    Let $p,q>1$ be conjugates, and let $f,g\in\Cn$. Suppose that $\zetaen{f}{p}\in\ell^p(\N_{-1})$ and $\zetaen{g}{q}\in\ell^q(\N_{-1})$. If $I_{f,g}$ exists on $[0,1]$, then $I_{g,f}^{\T}$ also admits a unique continuous extension to $[0,1]$, given by
    \begin{equation}\label{eq:ibp-definition-of-reverse-integral}
        I_{g,f}(t):=f(t)g(t)-f(0)g(0)-I_{f,g}(t), \qquad t\in[0,1].
    \end{equation}
    Equivalently, for every $s,t\in[0,1]$, we have the integration by parts formula:
    \[
        \int_s^t f\,dg+\int_s^t g\,df=f(t)g(t)-f(s)g(s).
    \]
\end{theorem}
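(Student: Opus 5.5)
The plan is to work first at dyadic points, using the exact discrete summation-by-parts identity, and then pass to $[0,1]$ by density. Fix $t\in\T$ and $n\ge\ell(t)$. Write $t_{n,k}$ for $k=0,\dots,2^nt$. For each $k$,
\[
f(t_{n,k+1})g(t_{n,k+1})-f(t_{n,k})g(t_{n,k}) = f(t_{n,k})\Delta_k g + g(t_{n,k})\Delta_k f + \Delta_k f\,\Delta_k g,
\]
where $\Delta_k h:=h(t_{n,k+1})-h(t_{n,k})$. Summing over $k$ telescopes the left-hand side and gives
\[
S_n^L(t;f,g)+S_n^L(t;g,f)=f(t)g(t)-f(0)g(0)-Q_n(t),\qquad Q_n(t):=\sum_{k=0}^{2^nt-1}\Delta_kf\,\Delta_kg .
\]
By Corollary~\ref{cor:separated-coefficient-criterion}, $S_n^L(t;f,g)\to I^{\T}_{f,g}(t)$. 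So $I^{\T}_{g,f}(t)$ exists and equals $f(t)g(t)-f(0)g(0)-I^{\T}_{f,g}(t)$ as soon as $Q_n(t)\to0$.

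The key step, and the only real analytic content, is to show that the dyadic covariation vanishes. Each level-$n$ increment $\Delta_k f$ is a child increment at level $n$, namely the left or right child of an interval at level $n-1$. Grouping the $k$'s by parent and applying Lemma~\ref{lem:child-estimate} at level $n-1$, for $n\ge1$, bounds $\big(\sum_k|\Delta_kf|^p\big)^{1/p}$ by a constant multiple of $\mathcal A^{(p)}_f(n-1)$, and similarly for $g$ with exponent $q$. H\"older's inequality then gives
\[
|Q_n(t)|\le C\,\mathcal A^{(p)}_f(n-1)\,\mathcal A^{(q)}_g(n-1),
\]
uniformly in $t$. Since $\mathcal A^{(p)}_f\mathcal A^{(q)}_g\in\ell^1(\N_0)$, as noted after Corollary~\ref{cor:separated-coefficient-criterion}, its terms tend to $0$. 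Hence $Q_n(t)\to0$, and in fact $\sup_{t\in\T}|Q_n(t)|\to0$.

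For the extension, the right-hand side of \eqref{eq:ibp-definition-of-reverse-integral} is continuous on $[0,1]$ because $f$, $g$ and $I_{f,g}$ are continuous. It agrees with $I^{\T}_{g,f}$ on the dense set $\T$. Therefore $I^{\T}_{g,f}$ is uniformly continuous on $\T$, and its unique continuous extension is exactly the stated $I_{g,f}$. Uniqueness holds because $\T$ is dense. No Schauder tail condition on $g$ is needed, since continuity is inherited from $I_{f,g}$.

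Finally, I evaluate \eqref{eq:ibp-definition-of-reverse-integral} at $t$ and at $s$ and subtract. Using the definition of interval integrals, this yields $\int_s^t f\,dg+\int_s^t g\,df=f(t)g(t)-f(s)g(s)$. Conversely, this formula with $s=0$ recovers \eqref{eq:ibp-definition-of-reverse-integral}, which gives the claimed equivalence. The main obstacle is only the bookkeeping in the covariation bound: $\Delta_k$ at level $n$ must be matched with the child increments at level $n-1$, and the case $n=0$ is harmless because only the limit matters.
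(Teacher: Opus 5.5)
Your proof is correct and follows essentially the same route as the paper: discrete summation by parts at level $n$, then H\"older's inequality together with Lemma~\ref{lem:child-estimate} to show the dyadic covariation term vanishes, then extension by density using the continuity of $I_{f,g}$. The differences are cosmetic. The paper writes each level-$n$ increment as the sum of its two level-$(n+1)$ children, which bounds the covariation by $\mathcal A^{(p)}_f(n)\,\mathcal A^{(q)}_g(n)$, and it obtains existence of $I^{\T}_{g,f}$ from Corollary~\ref{cor:separated-coefficient-criterion} applied to $(g,f)$; you instead group by parents at level $n-1$ and deduce existence from the identity itself, and both versions work.
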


\begin{proof}
    By Corollary~\ref{cor:separated-coefficient-criterion}, both integral maps $I_{f,g}^{\T}$ and $I_{g,f}^{\T}$ are well-defined on $\T$.

    We first prove the integration-by-parts identity on $\T$. Fix $t\in\T$ and let $n\ge\ell(t)$. At level $n$, the finite dyadic sums satisfy
    \[
        S_n^L(t;f,g)+S_n^L(t;g,f)
        = f(t)g(t)-f(0)g(0) - \sum_{k=0}^{2^nt-1}\big(f(t_{n,k+1})-f(t_{n,k})\big) \big(g(t_{n,k+1})-g(t_{n,k})\big).
    \]
    It remains to show that the last term converges to zero. By H\"older's inequality,
    \begin{align*}
        &\bigg|\sum_{k=0}^{2^nt-1}\big(f(t_{n,k+1})-f(t_{n,k})\big)\big(g(t_{n,k+1})-g(t_{n,k})\big)\bigg|
        \\
        & \qquad \qquad \qquad \le \bigg(\sum_{k\in I_n}\big|f(t_{n,k+1})-f(t_{n,k})\big|^p\bigg)^{\frac1p}
        \bigg(\sum_{k\in I_n}\big|g(t_{n,k+1})-g(t_{n,k})\big|^q\bigg)^{\frac1q}.
    \end{align*}
    Since $f(t_{n,k+1})-f(t_{n,k})=f_{n+1,L}(k)+f_{n+1,R}(k)$, Lemma~\ref{lem:child-estimate} gives
    \[
        \bigg(\sum_{k\in I_n}\big|f(t_{n,k+1})-f(t_{n,k})\big|^p\bigg)^{\frac1p}
        \le \mathcal A^{(p)}_f(n),
    \]
    and a similar bound for $g$. Hence,
    \[
        \bigg|\sum_{k=0}^{2^nt-1}\big(f(t_{n,k+1})-f(t_{n,k})\big)\big(g(t_{n,k+1})-g(t_{n,k})\big)\bigg|
        \le \mathcal A^{(p)}_f(n) \, \mathcal A^{(q)}_g(n).
    \]
    As in the proof of Corollary~\ref{cor:separated-coefficient-criterion}, the conditions $\zetaen{f}{p}\in\ell^p(\N_{-1})$ and $\zetaen{g}{q}\in\ell^q(\N_{-1})$ imply 
    \[
        \mathcal A^{(p)}_f\,\mathcal A^{(q)}_g\in\ell^1(\N_0), \qquad \text{and in particular} \qquad \mathcal A^{(p)}_f(n)\mathcal A^{(q)}_g(n)\xrightarrow{n\to\infty}0.
    \]
    Letting $n\to\infty$ in the finite dyadic sums identity gives
    \[
        I_{f,g}^{\T}(t)+I_{g,f}^{\T}(t)=f(t)g(t)-f(0)g(0), \qquad t\in\T.
    \]

    Now define $I_{g,f}$ by \eqref{eq:ibp-definition-of-reverse-integral}. This function is continuous on $[0,1]$ and agrees with $I_{g,f}^{\T}$ on the dense set $\T$. Hence, it is the unique continuous extension of $I_{g,f}^{\T}$. The displayed integration-by-parts identities follow immediately.
\end{proof}

\begin{remark}[One-sided tail condition]
\label{rem:one-sided-tail-condition}
    The Schauder tail condition is one-sided: it is enough to impose it on either the integrand or the integrator to obtain both Faber--Schauder integrals. More precisely, let $p,q>1$ be conjugates, and let $f,g\in\Cn$ satisfy $\zetaen{f}{p}\in\ell^p(\N_{-1})$ and $\zetaen{g}{q}\in\ell^q(\N_{-1})$. If either $H_f\in\ell^p(\N_0)$ or $H_g\in\ell^q(\N_0)$, then both $I_{f,g}$ and $I_{g,f}$ exist on $[0,1]$, and the integration by parts formula \eqref{eq:ibp-definition-of-reverse-integral} holds. Indeed, if $H_f\in\ell^p(\N_0)$, then Theorem~\ref{thm:uniform-continuity} gives the existence of $I_{f,g}$, and Theorem~\ref{thm:extended-integration-by-parts} gives the existence of $I_{g,f}$. The case $H_g\in\ell^q(\N_0)$ follows by first applying Theorem~\ref{thm:uniform-continuity} to the pair $(g,f)$, and then applying Theorem~\ref{thm:extended-integration-by-parts} with the roles of $f$ and $g$ interchanged.
\end{remark}

\medskip

\subsection{A dyadic analogue of the Young--Lo\`eve estimate}   \label{subsec:dyadic-young-loeve}

We now state and prove a dyadic analogue of the Young--Lo\`eve estimate.

First, to simplify notations, for $r>1$ and $h\in\Cn$ satisfying $\zetaen{h}{r}\in\ell^r(\N_{-1})$ and $H_h\in\ell^r(\N_0)$, we set
\begin{equation}    \label{def:D-and-L}
    D^{(r)}_h(N):=\mathcal A^{(r)}_h(N)+\frac12H_h(N), \qquad L^{(r)}_h(N):=\Vert\mathcal A^{(r)}_h\Vert_{\ell^r_{\ge N}}+\frac12\Vert H_h\Vert_{\ell^r_{\ge N}}.
\end{equation}

Throughout this subsection, $p,q>1$ are conjugates, and $f,g\in\Cn$ satisfy
\[
    \zetaen{f}{p}\in\ell^p(\N_{-1}), \qquad \zetaen{g}{q}\in\ell^q(\N_{-1}), \qquad H_f\in\ell^p(\N_0), \qquad H_g\in\ell^q(\N_0).
\]
Thus, by Theorem~\ref{thm:uniform-continuity}, both Faber--Schauder integrals $I_{f,g}$ and $I_{g,f}$ exist on $[0,1]$. Moreover, $L^{(p)}_f(N)<\infty$ and $L^{(q)}_g(N)<\infty$ for every $N\ge0$, and
\[
    L^{(p)}_f(N)\xrightarrow{N\to\infty}0, \qquad
    L^{(q)}_g(N)\xrightarrow{N\to\infty}0.
\]

\begin{lemma}[Projection estimate for the Faber--Schauder integral] \label{lem:projection-estimate-integral}
    Let $h,g\in\Cn$. Suppose
    \[
        \zetaen{h}{p}\in\ell^p(\N_{-1}), \qquad
        \zetaen{g}{q}\in\ell^q(\N_{-1}), \qquad
        H_h\in\ell^p(\N_0),
    \]
    such that $I_{h,g}$ exists on $[0,1]$. For $N\ge0$ and $u\in[0,1]$, set $u_N:=2^{-N}\lfloor2^Nu\rfloor$. Then
    \begin{align*}
        \Big|I_{h,g}(u)-I_{h,g}(u_N)-h(u)\big(g(u)-g(u_N)\big)\Big|
        \le \frac12\Vert\mathcal A^{(q)}_g\Vert_{\ell^q_{\ge N}} L^{(p)}_h(N) + \frac14 \big\Vert \mathcal A^{(p)}_h \, \mathcal A^{(q)}_g \big\Vert_{\ell^1_{\ge N}}.
    \end{align*}
\end{lemma}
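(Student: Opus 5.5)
The estimate is already available on the dyadic set: inequality \eqref{eq:projection-bound-on-T} from the proof of Lemma~\ref{lem:uniform-integral-map-approx}, applied to the pair $(h,g)$, states exactly the claimed bound for every $t\in\T$, since $L^{(p)}_h(N)=\Vert\mathcal A^{(p)}_h\Vert_{\ell^p_{\ge N}}+\frac12\Vert H_h\Vert_{\ell^p_{\ge N}}$. The derivation there used only $\zetaen{h}{p}\in\ell^p$, $\zetaen{g}{q}\in\ell^q$ and $H_h\in\ell^p$. Those are precisely the present hypotheses, so I would first record that bound for dyadic $t$, with $t_N=2^{-N}\lfloor 2^Nt\rfloor$. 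I would double-check that at $t=1$ the bound is trivial: there $t_N=1$, so the left side vanishes.

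The plan is then to pass to general $u\in[0,1]$ by density and continuity. Fix $u\in[0,1)$ and $N$, and let $u_N=t_{N,k}$, so that $u\in[t_{N,k},t_{N,k+1})$. Choose dyadic points $t^{(i)}\in\T\cap[t_{N,k},t_{N,k+1})$ with $t^{(i)}\to u$; for instance, take $t^{(i)}=2^{-i}\lfloor 2^iu\rfloor$ for $i\ge N$. For each such point $\lfloor 2^N t^{(i)}\rfloor=k$ holds, so $t^{(i)}_N=u_N$ stays fixed. This is the key point: the floor map is locally constant from the left-closed side, so approximating from within the same level-$N$ cell avoids any jump in $u_N$.

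Apply the dyadic bound at $t^{(i)}$. The right-hand side does not depend on $t^{(i)}$. On the left-hand side, $I_{h,g}(t^{(i)})\to I_{h,g}(u)$ by continuity of the extension (Definition~\ref{def:FS-integral-unit-interval}), $I_{h,g}(u_N)=I^{\T}_{h,g}(u_N)$ is fixed, and $h(t^{(i)})\big(g(t^{(i)})-g(u_N)\big)\to h(u)\big(g(u)-g(u_N)\big)$ by continuity of $h$ and $g$. Passing to the limit gives the bound at $u$. The case $u=1$ is trivial as before.

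The main obstacle is only bookkeeping. One must confirm that \eqref{eq:projection-bound-on-T} holds uniformly in $t\in\T$ with constants independent of $t$; it does, since it came from \eqref{eq:approach-remainder-bound} and \eqref{eq:dyadic-approach-f-estimate}, which are uniform suprema. One must also ensure that the approximating sequence keeps $u_N$ fixed, which the choice from the left within the same cell guarantees.
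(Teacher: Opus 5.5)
Your proposal is correct and essentially identical to the paper's proof. The paper likewise cites \eqref{eq:projection-bound-on-T} for dyadic $u$, approximates a general $u\in[0,1)$ by $u_m=2^{-m}\lfloor 2^m u\rfloor$ with $m\ge N$ (so that $2^{-N}\lfloor 2^N u_m\rfloor=u_N$ stays fixed), lets $m\to\infty$ using continuity of $I_{h,g}$, $h$ and $g$, and treats $u=1$ as trivial.
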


\begin{proof}
    If $u\in\T$, the estimate is exactly \eqref{eq:projection-bound-on-T} applied to the pair $(h,g)$.

    For a general $u\in[0,1)$, consider $u_m=2^{-m}\lfloor2^m u\rfloor$ for $m\ge N$. Then $u_m\in\T$, $u_m\to u$, and
    \[
        2^{-N}\lfloor2^Nu_m\rfloor=u_N.
    \]
    Applying the estimate already proved for dyadic points to $u_m$ gives
    \begin{align*}
        \Big|I_{h,g}(u_m)-I_{h,g}(u_N)-h(u_m)\big(g(u_m)-g(u_N)\big)\Big|
        \le \frac12\Vert\mathcal A^{(q)}_g \Vert_{\ell^q_{\ge N}} L^{(p)}_h(N) + \frac14 \big\Vert \mathcal A^{(p)}_h \, \mathcal A^{(q)}_g \big\Vert_{\ell^1_{\ge N}}.
    \end{align*}
    Letting $m\to\infty$ and using the continuity of $I_{h,g}$, $h$, and $g$ gives the estimate at $u$. The case $u=1$ is trivial because $u_N=1$ and the left-hand side is zero.
\end{proof}

\begin{theorem}[A dyadic analogue of the Young--Lo\`eve estimate] \label{thm:dyadic-young-loeve}
    Let $p,q>1$ be conjugates, and let $f,g\in\Cn$. Suppose that $\zetaen{f}{p}\in\ell^p(\N_{-1})$, $\zetaen{g}{q}\in\ell^q(\N_{-1})$, $H_f\in\ell^p(\N_0)$, and $H_g\in\ell^q(\N_0)$. Then the
    Faber--Schauder integral $I_{f,g}$ exists on $[0,1]$, and for every $0\le s\le\xi\le t\le1$ and every $N\ge0$ satisfying $t-s\le2^{-N}$,
    \[
        \bigg|\int_s^t f\,dg-f(\xi)\big(g(t)-g(s)\big)\bigg| \le 8L^{(p)}_f(N)L^{(q)}_g(N).
    \]
    In particular, taking $N=0$ gives the global estimate
    \begin{equation}    \label{ineq:global-Young-Loeve}
        \bigg|\int_s^t f\,dg-f(\xi)\big(g(t)-g(s)\big)\bigg| \le 8L^{(p)}_f(0)L^{(q)}_g(0), \qquad 0\le s\le\xi\le t\le1.
    \end{equation}
\end{theorem}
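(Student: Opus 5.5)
The plan is to reduce everything to the projection estimate of Lemma~\ref{lem:projection-estimate-integral} and the grid remainder estimate of Lemma~\ref{lem:refinement-remainders}~(i). The size of the error terms will be controlled by the endpoint and oscillation bounds of Lemma~\ref{lem:dyadic-approach-f}. Existence of $I_{f,g}$ is Theorem~\ref{thm:uniform-continuity}, so only the estimate needs proof.

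Three elementary comparisons convert every error term into a multiple of $L^{(p)}_f(N)L^{(q)}_g(N)$:
\begin{itemize}
\item $D^{(r)}_h(N)\le L^{(r)}_h(N)$, since a single term of a sequence is at most its $\ell^r$ tail norm;
\item $\Vert\mathcal A^{(q)}_g\Vert_{\ell^q_{\ge N}}\le L^{(q)}_g(N)$;
\item by H\"older, $\Vert\mathcal A^{(p)}_f\mathcal A^{(q)}_g\Vert_{\ell^1_{\ge N}}\le \Vert\mathcal A^{(p)}_f\Vert_{\ell^p_{\ge N}}\Vert\mathcal A^{(q)}_g\Vert_{\ell^q_{\ge N}}\le L^{(p)}_f(N)L^{(q)}_g(N)$.
\end{itemize}
Consequently each application of Lemma~\ref{lem:projection-estimate-integral} (with $h=f$) costs at most $\tfrac34 L^{(p)}_f(N)L^{(q)}_g(N)$, and each grid remainder $R_{N,j}$ costs at most $\tfrac14 L^{(p)}_f(N)L^{(q)}_g(N)$.

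Since $t-s\le 2^{-N}$, the projections $s_N$ and $t_N$ (with $u_N:=2^{-N}\lfloor 2^N u\rfloor$) are either equal or adjacent level-$N$ points. If $t=1$, I handle it by continuity, or treat $t_N=t$ directly.

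\emph{Case 1: $s_N=t_N=:z$.} Here $s,\xi,t$ all lie in $[z,z+2^{-N}]$. I write
\[
\int_s^t f\,dg=\big(I_{f,g}(t)-I_{f,g}(z)\big)-\big(I_{f,g}(s)-I_{f,g}(z)\big).
\]
Applying the projection lemma at $t$ and at $s$ replaces these by $f(t)(g(t)-g(z))-f(s)(g(s)-g(z))$, with two projection errors. Subtracting $f(\xi)(g(t)-g(s))=f(\xi)(g(t)-g(z))-f(\xi)(g(s)-g(z))$ leaves the cross terms
\[
(f(t)-f(\xi))(g(t)-g(z)) \quad\text{and}\quad (f(s)-f(\xi))(g(s)-g(z)).
\]
By the two-sided endpoint estimates \eqref{ineq:two-endpoint-estimates}, each $f$-difference is at most $2D^{(p)}_f(N)$ and each $g$-difference is at most $D^{(q)}_g(N)$. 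This step is where $H_g\in\ell^q$ is used.

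\emph{Case 2: $t_N=s_N+2^{-N}$.} I insert the grid point $t_N$ and decompose
\[
\int_s^t=\big(I(t)-I(t_N)\big)+\big(I(t_N)-I(s_N)\big)-\big(I(s)-I(s_N)\big).
\]
The first and third pieces are handled by the projection lemma. The middle piece equals $f(s_N)(g(t_N)-g(s_N))+R_{N,j}$ by Lemma~\ref{lem:refinement-remainders}~(i). Comparing the resulting main terms with $f(\xi)(g(t)-g(s))$ produces products of the form (difference of $f$ between two points at distance at most one or two level-$N$ cells) times ($g$-increment inside one level-$N$ cell). These are bounded by \eqref{ineq:two-endpoint-estimates} and \eqref{ineq:oscillation-estimate}, giving $f$-factors of order $2D^{(p)}_f(N)$ (routing through the common grid point $t_N$ where needed) and $g$-factors of order $D^{(q)}_g(N)$.

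The main obstacle is the bookkeeping in Case~2. The main terms must be grouped so that every $f$-difference is taken within adjacent cells through the common point $t_N$, and the accumulated constants must come to at most $8$. Rough count: two projection errors ($\tfrac32$), one grid remainder ($\tfrac14$), and a few cross products each at most $2LL$. If the naive grouping exceeds $8$, I would first try pairing $f(t)(g(t)-g(t_N))$ and $f(s_N)(g(t_N)-g(s_N))-f(s)(g(s)-g(s_N))$ against $f(\xi)$ separately, each with a single $2L_fL_g$ cross term. The global bound \eqref{ineq:global-Young-Loeve} is then the case $N=0$.
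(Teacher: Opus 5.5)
Your proposal is correct and follows the paper's proof: the same three-piece decomposition through $s_N$ and $t_N$, the projection lemma at $s$ and $t$, Lemma~\ref{lem:refinement-remainders}~(i) on the middle cell, and the bounds of Lemma~\ref{lem:dyadic-approach-f}. The only cosmetic difference is that the paper first replaces $f$ by $h=f-f(\xi)$ via bilinearity, whereas you subtract $f(\xi)\big(g(t)-g(s)\big)$ piece by piece, which is algebraically identical. Your natural grouping already closes with $3\cdot 2+\tfrac32+\tfrac14=7\tfrac34\le 8$, exactly the paper's count $4+2+1+\tfrac34$, so no regrouping is needed.
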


We refer to these estimates as dyadic Young--Lo\`eve estimates, by analogy with the classical Young--Lo\`eve estimate for finite $p$-variation paths. See Young~\cite[Section~10]{Young1936} for the classical inequality; for a modern $p$-variation formulation, see \cite[Theorem~6.8]{FrizVictoir2010}.

The estimates in Theorem~\ref{thm:dyadic-young-loeve} come in both global and local forms. The right-hand side of the global estimate \eqref{ineq:global-Young-Loeve} is independent of the interval $[s,t]$ and of the evaluation point $\xi$. By contrast, the local estimate is scale-sensitive: since $L^{(p)}_f(N)$ and $L^{(q)}_g(N)$ decay to zero as $N\to\infty$, it yields
\[
    \sup_{\substack{0\le s\le\xi\le t\le1\\ t-s\le2^{-N}}} \left| \int_s^t f\,dg-f(\xi)\big(g(t)-g(s)\big) \right| \xrightarrow{N\to\infty}0.
\]

\begin{proof}
    Set $h:=f-f(\xi)$. Then
    \[
        \zetaen{h}{p}=\zetaen{f}{p}, \qquad \mathcal A^{(p)}_h=\mathcal A^{(p)}_f, \qquad H_h=H_f.
    \]
    By the bilinearity of Faber--Schauder integrals,
    \[
        \int_s^t f\,dg-f(\xi)\big(g(t)-g(s)\big) = \int_s^t h\,dg.
    \]
    Consider $s_N:=2^{-N}\lfloor2^Ns\rfloor$ and $t_N:=2^{-N}\lfloor2^Nt\rfloor$. Since $s\le t$ and $t-s\le2^{-N}$, the dyadic points $s_N$ and $t_N$ are either equal or adjacent level-$N$ dyadic points. We decompose
    \[
        \int_s^t h\,dg = \big(I_{h,g}(t)-I_{h,g}(t_N)\big) + \big(I_{h,g}(t_N)-I_{h,g}(s_N)\big) + \big(I_{h,g}(s_N)-I_{h,g}(s)\big).
    \]

    We first estimate the two boundary terms. By Lemma~\ref{lem:projection-estimate-integral}, applied at $t$ and at $s$,
    \[
        \Big|I_{h,g}(t)-I_{h,g}(t_N)-h(t)\big(g(t)-g(t_N)\big)\Big| + \Big|I_{h,g}(s)-I_{h,g}(s_N)-h(s)\big(g(s)-g(s_N)\big)\Big|
    \]
    is bounded by
    \[
        \Vert\mathcal A^{(q)}_g\Vert_{\ell^q_{\ge N}}L^{(p)}_f(N) + \frac12 \big\Vert \mathcal A^{(p)}_f \, \mathcal A^{(q)}_g \big\Vert_{\ell^1_{\ge N}}.
    \]
    Since $s\le\xi\le t$ and $t-s\le2^{-N}$, the oscillation estimate \eqref{ineq:oscillation-estimate} in Lemma~\ref{lem:dyadic-approach-f}, applied with $r=p$ to $h$, gives
    \[
        |h(s)| = |f(s) - f(\xi)| \le2D^{(p)}_f(N), \qquad |h(t)| = |f(t) - f(\xi)| \le2D^{(p)}_f(N).
    \]
    The endpoint estimate \eqref{ineq:two-endpoint-estimates} in Lemma~\ref{lem:dyadic-approach-f}, applied with $r=q$ to $g$, gives
    \[
        \big|g(s)-g(s_N)\big|\le D^{(q)}_g(N), \qquad
        \big|g(t)-g(t_N)\big|\le D^{(q)}_g(N).
    \]
    Hence, the product part of the two boundary terms is bounded by
    \[
        |h(t)|\big|g(t)-g(t_N)\big| + |h(s)|\big|g(s)-g(s_N)\big|
        \le 4D^{(p)}_f(N)D^{(q)}_g(N).
    \]

    It remains to estimate the middle term $I_{h,g}(t_N)-I_{h,g}(s_N)$. If $s_N=t_N$, then it is zero. Otherwise we can assume that $s_N$ and $t_N$ are adjacent level-$N$ dyadic points. Lemma~\ref{lem:refinement-remainders} (i) gives
    \[
        I_{h,g}^{\T}(t_N)-I_{h,g}^{\T}(s_N) = h(s_N)\big(g(t_N)-g(s_N)\big)+R_N,
    \]
    where
    \[
        |R_N| \le \frac14 \big\Vert \mathcal A^{(p)}_f \, \mathcal A^{(q)}_g \big\Vert_{\ell^1_{\ge N}}.
    \]
    We claim that
    \[
        |h(s_N)|=\big|f(s_N)-f(\xi)\big|\le2D^{(p)}_f(N).
    \]
    Indeed, since $s_N$ and $t_N$ are adjacent, if $\xi\le t_N$, then $\xi$ lies in the level-$N$ interval with left endpoint $s_N$, so the endpoint estimate \eqref{ineq:two-endpoint-estimates} in Lemma~\ref{lem:dyadic-approach-f} gives
    \[
        \big|f(s_N)-f(\xi)\big|\le D^{(p)}_f(N).
    \]
    If $\xi\ge t_N$, then
    \[
        \big|f(s_N)-f(\xi)\big| \le \big|f(s_N)-f(t_N)\big|+\big|f(t_N)-f(\xi)\big| \le 2D^{(p)}_f(N).
    \]
    Thus, the claim holds. Also, $g(t_N)-g(s_N)$ is the increment of $g$ over one level-$N$ dyadic interval, so Lemma~\ref{lem:child-estimate} gives
    \[
        \big|g(t_N)-g(s_N)\big|\le \mathcal A^{(q)}_g(N).
    \]
    Therefore, the middle term is bounded by
    \[
        2D^{(p)}_f(N)\mathcal A^{(q)}_g(N) + \frac14 \big\Vert \mathcal A^{(p)}_f \, \mathcal A^{(q)}_g \big\Vert_{\ell^1_{\ge N}}.
    \]

    Combining the boundary and middle estimates yields
    \[
        \bigg|\int_s^t h\,dg\bigg| \le 4D^{(p)}_f(N)D^{(q)}_g(N) + 2D^{(p)}_f(N)\mathcal A^{(q)}_g(N) + \Vert\mathcal A^{(q)}_g\Vert_{\ell^q_{\ge N}}L^{(p)}_f(N) + \frac34 \big\Vert \mathcal A^{(p)}_f \, \mathcal A^{(q)}_g \big\Vert_{\ell^1_{\ge N}}.
    \]
    Note that
    \[
        D^{(p)}_f(N) \le L^{(p)}_f(N), \qquad \mathcal A^{(p)}_f(N) \le \Vert\mathcal A^{(p)}_f\Vert_{\ell^p_{\ge N}} \le L^{(p)}_f(N),
    \]
    with similar inequalities for $f, p$ replaced by $g, q$. By H\"older's inequality,
    \[
        \big\Vert \mathcal A^{(p)}_f \, \mathcal A^{(q)}_g \big\Vert_{\ell^1_{\ge N}}
        \le \Vert\mathcal A^{(p)}_f\Vert_{\ell^p_{\ge N}} \Vert\mathcal A^{(q)}_g\Vert_{\ell^q_{\ge N}}
        \le L^{(p)}_f(N)L^{(q)}_g(N),
    \]
    thus, we obtain
    \[
        \bigg|\int_s^t h\,dg\bigg| \le \Big(4+2+1+\frac34\Big)L^{(p)}_f(N)L^{(q)}_g(N) \le 8L^{(p)}_f(N)L^{(q)}_g(N).
    \]
\end{proof}

\medskip

\subsection{Energy spaces}  \label{subsec:energy-spaces}

The previous subsections show that two types of coefficient conditions are needed for the full Faber--Schauder integral theory. The level energy condition $\zetaen{f}{p}\in\ell^p(\N_{-1})$ controls the dyadic endpoint integrals, while the Schauder tail condition $H_f\in\ell^p(\N_0)$ ensures continuous extension to the whole interval (Definitions~\ref{def:level-energies}, \ref{def:schauder-tail-sequence}). We now package these two conditions into a single space.

\begin{definition}[Energy spaces]   \label{def:energy-spaces}
    Let $p>1$. For $f\in\Cn$, define
    \[
        \|f\|_{\EE^p}:=\|f\|_{\infty}+\Vert\zetaen{f}{p}\Vert_{\ell^p(\N_{-1})}+\Vert H_f\Vert_{\ell^p(\N_0)},
    \]
    and
    \[
        \EE^p:=\{f\in\Cn:\|f\|_{\EE^p}<\infty\}.
    \]
\end{definition}

\begin{proposition}[Completeness of the energy space]   \label{prop:energy-banach}
    For every $p>1$, the space $(\EE^p,\|\cdot\|_{\EE^p})$ is a Banach space.
\end{proposition}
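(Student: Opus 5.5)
The argument has two parts: first check that $\|\cdot\|_{\EE^p}$ is a norm, then prove completeness by the usual Fatou-type argument on coefficients.

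\emph{Norm.} For fixed $n$, the coefficient map $f\mapsto\theta^f_{n,k}$ is linear by \eqref{eq:FS-coeff-1}--\eqref{eq:FS-coeff}. Hence $\zetaen{f}{p}(n)$ is a (weighted) $\ell^p$-norm of a linear image of $f$, and so it is subadditive and absolutely homogeneous in $f$. Minkowski's inequality in $\ell^p(\N_{-1})$ then gives the triangle inequality for $\Vert\zetaen{f}{p}\Vert_{\ell^p}$. In the same way, $\sup_k|\theta^f_{m,k}|$ is a seminorm in $f$, so each $H_f(n)$ is a seminorm with values in $[0,\infty]$, and $\Vert H_f\Vert_{\ell^p(\N_0)}$ is a seminorm. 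Adding $\|f\|_\infty$ makes the sum definite, so $\|\cdot\|_{\EE^p}$ is a norm on $\EE^p$. In particular $\EE^p$ is a vector space, since sums and scalar multiples of functions with finite norm again have finite norm.

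\emph{Completeness.} Let $(f_j)$ be Cauchy in $\EE^p$. Because $\|\cdot\|_\infty\le\|\cdot\|_{\EE^p}$, the sequence is Cauchy in $\Cn$, so $f_j\to f$ uniformly for some $f\in\Cn$. Each $\theta^{h}_{n,k}$ is a finite combination of point values of $h$, so $\theta^{f_j-f_i}_{n,k}\to\theta^{f-f_i}_{n,k}$ as $j\to\infty$, for every fixed $n,k$ and $i$.

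Now fix $\varepsilon>0$ and choose $J$ with $\|f_j-f_i\|_{\EE^p}<\varepsilon$ for $i,j\ge J$. For a fixed $i\ge J$, we pass to the limit $j\to\infty$ in finitely many terms at a time.

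For the level energies, take any finite $M$. Then
\[
\sum_{n=-1}^M\zetaen{f-f_i}{p}(n)^p=\lim_{j}\sum_{n=-1}^M\zetaen{f_j-f_i}{p}(n)^p\le\varepsilon^p,
\]
since for $n\le M$ each $\zetaen{\cdot}{p}(n)$ involves only finitely many coefficients. Letting $M\to\infty$ gives $\Vert\zetaen{f-f_i}{p}\Vert_{\ell^p}\le\varepsilon$.

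The tail term is where the care is needed, because $H(n)$ is an infinite series of suprema. Lower semicontinuity still holds. For finite truncations $n\le m\le M'$ and finitely many $k$, pointwise convergence gives
\[
\sum_{m=n}^{M'}2^{-m/2}\sup_k|\theta^{f-f_i}_{m,k}|\le\liminf_j H_{f_j-f_i}(n).
\]
Each $I_m$ is finite, so the suprema are maxima and commute with the limit; letting $M'\to\infty$ yields $H_{f-f_i}(n)\le\liminf_j H_{f_j-f_i}(n)$. Fatou's lemma in $\ell^p$, applied first over finitely many $n$ and then letting that range grow, gives $\Vert H_{f-f_i}\Vert_{\ell^p}\le\liminf_j\Vert H_{f_j-f_i}\Vert_{\ell^p}\le\varepsilon$.

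Putting the three pieces together, $\|f-f_i\|_{\EE^p}\le3\varepsilon$ for all $i\ge J$. Hence $f=f_i+(f-f_i)\in\EE^p$ and $f_i\to f$ in $\EE^p$.
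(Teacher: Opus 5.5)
Your proposal is correct and follows essentially the same route as the paper: you use the uniform limit of the Cauchy sequence, continuity of each coefficient functional in the sup norm, and a Fatou-type lower semicontinuity argument on finite truncations, first for the level energies and then for the Schauder tails, where you truncate the series of level maxima before passing to the limit. The only differences are cosmetic: you spell out the norm axioms, and you phrase the limit with a fixed $\varepsilon$ and index $J$ rather than through $\liminf$ bounds that vanish as $m\to\infty$.
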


\begin{proof}
    It is straightforward to check, using the linearity of the Faber--Schauder coefficients and the subadditivity of the level energies and Schauder tails, that $\EE^p$ is a vector space and that $\|\cdot\|_{\EE^p}$ is a norm.
    
    Let $(f_m)_{m\ge1}$ be a Cauchy sequence in $\EE^p$. Then $(f_m)$ is Cauchy in the uniform norm, and hence converges uniformly to some $f\in\Cn$. For each fixed coefficient index $(n,k)$, the coefficient formulae \eqref{eq:FS-coeff-1}, \eqref{eq:FS-coeff} show that the map $h\mapsto\theta_{n,k}^h$ is continuous with respect to the uniform norm. Hence
    \[
        \theta_{n,k}^{f_m-f} \longrightarrow 0 \qquad \text{as } m\to\infty
    \]
    for every fixed $(n,k)$.

    We first prove the convergence of the level energy part. Fix $m$ and $N\ge0$. Since $f_\ell\to f$ uniformly,
    \[
        \sum_{n=-1}^{N}\big(\zetaen{f_m-f}{p}(n)\big)^p
        = \lim_{\ell\to\infty} \sum_{n=-1}^{N} \big(\zetaen{f_m-f_\ell}{p}(n)\big)^p.
    \]
    Therefore
    \[
        \sum_{n=-1}^{N}\big(\zetaen{f_m-f}{p}(n)\big)^p
        \le \liminf_{\ell\to\infty}\Vert\zetaen{f_m-f_\ell}{p}\Vert_{\ell^p(\N_{-1})}^p.
    \]
    Since $(f_m)$ is Cauchy in $\EE^p$, the right-hand side tends to $0$ as $m\to\infty$, uniformly in $N$. Letting $N\to\infty$ gives
    \[
        \Vert\zetaen{f_m-f}{p}\Vert_{\ell^p(\N_{-1})}\longrightarrow0.
    \]

    We next prove the convergence of the Schauder tail part. For fixed $m$, $n\ge0$, and $M\ge n$,
    \[
        \sum_{r=n}^{M}2^{-\frac r2}\sup_{k\in I_r}\big|\theta_{r,k}^{f_m-f}\big|
        = \lim_{\ell\to\infty} \sum_{r=n}^{M}2^{-\frac r2}\sup_{k\in I_r}\big|\theta_{r,k}^{f_m-f_\ell}\big|,
    \]
    hence
    \[
        \sum_{r=n}^{M}2^{-\frac r2}\sup_{k\in I_r}\big|\theta_{r,k}^{f_m-f}\big|
        \le
        \liminf_{\ell\to\infty}H_{f_m-f_\ell}(n).
    \]
    Letting $M\to\infty$ yields
    \[
        H_{f_m-f}(n)\le\liminf_{\ell\to\infty}H_{f_m-f_\ell}(n), \qquad n\ge0.
    \]
    Therefore, for every $N\ge0$,
    \[
        \sum_{n=0}^{N} \big(H_{f_m-f}(n)\big)^p
        \le \liminf_{\ell\to\infty}\sum_{n=0}^{N} \big(H_{f_m-f_\ell}(n)\big)^p
        \le \liminf_{\ell\to\infty}\Vert H_{f_m-f_\ell}\Vert_{\ell^p(\N_0)}^p.
    \]
    Since $(f_m)$ is Cauchy in $\EE^p$, the right-hand side tends to $0$ as $m\to\infty$, uniformly in $N$. Letting $N\to\infty$ gives
    \[
        \Vert H_{f_m-f}\Vert_{\ell^p(\N_0)}\longrightarrow0.
    \]

    Combining uniform convergence, level energy convergence, and Schauder tail convergence, we obtain
    \[
        \|f_m-f\|_{\EE^p}\longrightarrow0.
    \]
    In particular, $f\in\EE^p$, and $\EE^p$ is complete.
\end{proof}

The next proposition compares the relative size of the energy spaces $\EE^p$ and shows that they are nested with respect to the exponent $p$.

\begin{proposition}[Monotonicity of the energy spaces]  \label{prop:energy-space-monotonicity}
    Let $1<p<q<\infty$. Then $\EE^p\subset \EE^q$ continuously. More precisely, there exists a constant $C_{p,q}>0$ such that
    \[
        \|f\|_{\EE^q}\le C_{p,q}\|f\|_{\EE^p}, \qquad f\in\EE^p.
    \]
    Furthermore, the inclusion is strict.
\end{proposition}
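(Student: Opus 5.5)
The plan is to compare the three components of $\|f\|_{\EE^q}$ and $\|f\|_{\EE^p}$ one at a time, using only the elementary embedding $\ell^p\subset\ell^q$ with $\|a\|_{\ell^q}\le\|a\|_{\ell^p}$ for $p<q$. The uniform-norm part is identical in both norms. For the Schauder tail, $H_f$ depends only on the coefficients and not on the exponent, so $\|H_f\|_{\ell^q(\N_0)}\le\|H_f\|_{\ell^p(\N_0)}$ directly.

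The level energies need a little more care, because $\zetaen{f}{q}$ and $\zetaen{f}{p}$ are different sequences.

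For $n\ge0$, write $\theta_n^f=(\theta^f_{n,k})_{k\in I_n}$. Then
\[
\zetaen{f}{q}(n)=2^{-\frac n2}\|\theta_n^f\|_{\ell^q(I_n)}\le 2^{-\frac n2}\|\theta_n^f\|_{\ell^p(I_n)}=\zetaen{f}{p}(n),
\]
again by $\ell^p\subset\ell^q$ on the finite index set $I_n$. For the affine level,
\[
\zetaen{f}{q}(-1)=2^{1-\frac1q}|\theta^f_{-1,0}|=2^{\frac1p-\frac1q}\,\zetaen{f}{p}(-1)\le 2\,\zetaen{f}{p}(-1).
\]
Hence $\zetaen{f}{q}\le 2^{\frac1p-\frac1q}\zetaen{f}{p}$ pointwise on $\N_{-1}$. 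Applying the sequence embedding once more gives
\[
\|\zetaen{f}{q}\|_{\ell^q}\le\|\zetaen{f}{q}\|_{\ell^p}\le 2^{\frac1p-\frac1q}\|\zetaen{f}{p}\|_{\ell^p}.
\]
Summing the three parts yields the inequality with $C_{p,q}=2^{\frac1p-\frac1q}$ (or simply $2$).

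For strictness I will build $f$ directly from its coefficients. Fix $0<\varepsilon<\frac1p-\frac1q$ and choose $\theta^f_{-1,0}=0$ and $\theta^f_{n,k}=a_n$ for all $k\in I_n$, with
\[
a_n:=2^{\frac n2-\frac nq-\varepsilon n}.
\]
First I must check that the Faber--Schauder series defines a continuous function. Since $0\le e_{n,k}\le 2^{-\frac n2-1}$ and the tents at a fixed level have disjoint supports, the level-$n$ block is bounded by $2^{-\frac n2-1}a_n=\tfrac12\,2^{-n(\frac1q+\varepsilon)}$. This is summable, so the series converges uniformly to some $f\in\Cn$, and uniqueness of the expansion identifies its coefficients as the $a_n$.

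Next I compute the two level energies. We get $\zetaen{f}{q}(n)=2^{-\frac n2}2^{\frac nq}a_n=2^{-\varepsilon n}$, which lies in $\ell^q$. On the other hand $\zetaen{f}{p}(n)=2^{n(\frac1p-\frac1q-\varepsilon)}\to\infty$, which is not in $\ell^p$.

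Finally, the tail is geometric: $H_f(n)=\sum_{m\ge n}2^{-m(\frac1q+\varepsilon)}\lesssim 2^{-n(\frac1q+\varepsilon)}$, which is in $\ell^q$. Therefore $f\in\EE^q\setminus\EE^p$.

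There is no real obstacle. The only point needing care is confirming that the prescribed coefficient array genuinely comes from a continuous function, which the uniform convergence of the series handles.
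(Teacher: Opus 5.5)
Your proof is correct and follows the paper's argument. You compare the three norm components using $\|\cdot\|_{\ell^q}\le\|\cdot\|_{\ell^p}$, with the affine level contributing the explicit factor $2^{\frac1p-\frac1q}$, and you prove strictness with a function whose coefficients are constant across each full level; the only difference is that the paper damps these coefficients by $(n+1)^{-2/q}$ instead of your $2^{-\varepsilon n}$, so its single example lies outside $\EE^r$ for every $r<q$, whereas yours depends on the chosen $p$, which is equally sufficient here.
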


\begin{proof}
    For $n\ge0$, the monotonicity of $\ell^r$ norms gives
    \[
        \zetaen{f}{q}(n) = 2^{-\frac n2}\bigg(\sum_{k\in I_n}\big|\theta^f_{n,k}\big|^q\bigg)^{\frac1q}
        \le 2^{-\frac n2}\bigg(\sum_{k\in I_n}\big|\theta^f_{n,k}\big|^p\bigg)^{\frac1p}
        = \zetaen{f}{p}(n).
    \]
    The affine level is controlled by a constant depending only on $p$ and $q$. Hence
    \[
        \|\zetaen{f}{q}\|_{\ell^q(\N_{-1})} \le C_{p,q}\|\zetaen{f}{p}\|_{\ell^p(\N_{-1})}
    \]
    holds for some constant $C_{p,q} > 0$ depending only on $p, q$. Also, since $p<q$,
    \[
        \|H_f\|_{\ell^q(\N_0)} \le \|H_f\|_{\ell^p(\N_0)}.
    \]
    Together with the uniform norm term, this proves the continuous embedding.

    To see that the inclusion is strict, set
    \[
        a_n:=2^{-\frac nq}(n+1)^{-\frac2q}, \qquad n\ge0,
    \]
    and define
    \[
        f(t):=\sum_{n=0}^{\infty}\sum_{k\in I_n}2^{\frac n2}a_ne_{n,k}(t), \qquad t\in[0,1].
    \]
    The series converges uniformly. Indeed, since
    \[
        \Vert e_{n,k}\Vert_{\infty}\le \frac12 2^{-\frac n2},
    \]
    and since at each point $t\in[0,1]$ at most one Faber--Schauder function at level $n$ is nonzero, the level-$n$ contribution is bounded in the uniform norm by $\frac12 a_n$. Since $\sum_{n=0}^{\infty}a_n<\infty$, the series converges uniformly. Moreover,
    \[
        \big(\zetaen{f}{q}(n)\big)^q
        = 2^{-\frac{nq}{2}}\sum_{k\in I_n}\big|2^{\frac n2}a_n\big|^q = 2^na_n^q = (n+1)^{-2},
    \]
    so $\zetaen{f}{q}\in\ell^q(\N_0)$. Also,
    \[
        H_f(n)=\sum_{m=n}^{\infty}a_m
        \le C_q2^{-\frac nq}(n+1)^{-\frac2q},
    \]
    and hence $H_f\in\ell^q(\N_0)$. Thus $f\in\EE^q$.

    On the other hand,
    \[
        \big(\zetaen{f}{p}(n)\big)^p
        = 2^{-\frac{np}{2}}\sum_{k\in I_n}\big|2^{\frac n2}a_n\big|^p
        = 2^na_n^p = 2^{n(1-\frac pq)}(n+1)^{-\frac{2p}{q}},
    \]
    which tends to infinity as $n\to\infty$. Therefore $\zetaen{f}{p}\notin\ell^p(\N_0)$, and hence $f\notin\EE^p$. This proves that the inclusion is strict.
\end{proof}

The following result summarizes that membership in the energy space for both the integrand and the integrator is sufficient for the full Faber--Schauder integral theory.

\begin{corollary}[Energy-space criterion] \label{cor:energy-space-full-theory}
    Let $p,q>1$ satisfy
    \[
        \frac1p+\frac1q\ge1.
    \]
    If $f\in\EE^p$ and $g\in\EE^q$, then both Faber--Schauder integrals $I_{f,g}$ and $I_{g,f}$ exist on $[0,1]$.

    More precisely, there exist conjugate exponents $\bar p,\bar q>1$ such that
    \[
        \bar p\ge p, \qquad \bar q\ge q, \qquad f\in\EE^{\bar p}, \qquad g\in\EE^{\bar q}.
    \]
    With respect to such a conjugate pair $(\bar p,\bar q)$, the algebraic properties in Proposition~\ref{prop:extended-bilinearity-additivity}, the integration-by-parts identity in Theorem~\ref{thm:extended-integration-by-parts}, and the dyadic Young--Lo\`eve estimate in Theorem~\ref{thm:dyadic-young-loeve} all apply to the pair $(f,g)$.
\end{corollary}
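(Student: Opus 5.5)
The plan is to reduce to the conjugate case via the monotonicity in Proposition~\ref{prop:energy-space-monotonicity}, and then invoke the results already proved for conjugate exponents. The only real work is choosing the conjugate pair $(\bar p,\bar q)$. If $\frac1p+\frac1q=1$ we take $\bar p=p$, $\bar q=q$. Otherwise $\frac1p+\frac1q>1$, and we want $\bar p\ge p$ and $\bar q\ge q$ with $\frac1{\bar p}+\frac1{\bar q}=1$.

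Concretely, we keep $\bar p:=p$ and set $\frac1{\bar q}:=1-\frac1p$, that is, $\bar q=\frac{p}{p-1}$. Since $p>1$, we get $\frac1{\bar q}\in(0,1)$, so $\bar q\in(1,\infty)$. Since $\frac1q\ge 1-\frac1p=\frac1{\bar q}$, we have $\bar q\ge q$. Then $f\in\EE^p=\EE^{\bar p}$ trivially. For $g$, either $\bar q=q$ or $q<\bar q$, and in the latter case Proposition~\ref{prop:energy-space-monotonicity} gives $g\in\EE^{\bar q}$ with $\|g\|_{\EE^{\bar q}}\le C_{q,\bar q}\|g\|_{\EE^q}$. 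A symmetric choice, such as $\frac1{\bar p}=\frac1p-\frac\delta2$ and $\frac1{\bar q}=\frac1q-\frac\delta2$ with $\delta=\frac1p+\frac1q-1$, would also work, since $\delta<\min(\frac1p,\frac1q)\cdot 2$ keeps both reciprocals in $(0,1)$. The one-sided choice is simpler, so I would use it.

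With the conjugate pair fixed, membership $f\in\EE^{\bar p}$ and $g\in\EE^{\bar q}$ unpacks by Definition~\ref{def:energy-spaces} into four conditions:
\[
\zetaen{f}{\bar p}\in\ell^{\bar p}(\N_{-1}),\qquad \zetaen{g}{\bar q}\in\ell^{\bar q}(\N_{-1}),\qquad H_f\in\ell^{\bar p}(\N_0),\qquad H_g\in\ell^{\bar q}(\N_0).
\]
These four conditions are exactly the hypotheses of Theorem~\ref{thm:uniform-continuity}. Applying it to the pair $(f,g)$ gives $I_{f,g}$, and applying it to the pair $(g,f)$ with the roles of the exponents swapped gives $I_{g,f}$. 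Alternatively, $I_{g,f}$ follows from Theorem~\ref{thm:extended-integration-by-parts}, as in Remark~\ref{rem:one-sided-tail-condition}. Proposition~\ref{prop:extended-bilinearity-additivity} needs only existence, and Theorems~\ref{thm:extended-integration-by-parts} and~\ref{thm:dyadic-young-loeve} have hypotheses that are precisely these four conditions for the conjugate pair, so all of them apply.

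I do not expect a genuine obstacle. The one point to check is that the integral constructed does not depend on the choice of $(\bar p,\bar q)$. This is automatic: $I^{\T}_{f,g}$ is defined as the limit of dyadic left sums, which involve no exponent, and the extension to $[0,1]$ is unique. So the object is intrinsic, and only the constants in the estimates depend on $(\bar p,\bar q)$.
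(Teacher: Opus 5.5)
Your proposal is correct and matches the paper's proof: reduce to a conjugate pair via Proposition~\ref{prop:energy-space-monotonicity}, then apply Theorem~\ref{thm:uniform-continuity} to $(f,g)$ with $(\bar p,\bar q)$ and to $(g,f)$ with $(\bar q,\bar p)$, after which the remaining results apply directly. Your explicit choice $\bar p=p$, $\bar q=p/(p-1)$ makes concrete the pair the paper leaves implicit, and your remark that the integral does not depend on the chosen pair (dyadic left sums involve no exponent, and the continuous extension is unique) is correct, though the paper does not state it.
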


\begin{proof}
    By Proposition~\ref{prop:energy-space-monotonicity}, we can choose conjugate exponents $\bar p,\bar q>1$ with $\bar p \ge p$, $\bar q \ge q$ such that $f\in\EE^p\subset\EE^{\bar p}$ and $g\in\EE^q\subset\EE^{\bar q}$. Therefore, Theorem~\ref{thm:uniform-continuity} gives the existence of $I_{f,g}$ on $[0,1]$. Applying the same theorem to the pair $(g,f)$ with the conjugate exponents $(\bar q,\bar p)$ gives the existence of $I_{g,f}$ on $[0,1]$. The remaining assertions follow directly from Proposition~\ref{prop:extended-bilinearity-additivity}, Theorem~\ref{thm:extended-integration-by-parts}, and Theorem~\ref{thm:dyadic-young-loeve}, applied with the conjugate pair $(\bar p,\bar q)$.
\end{proof}

Using the energy spaces of integrand and integrator, the next result shows that the Faber--Schauder integral map is a bounded bilinear operator.

\begin{proposition}[Bounded bilinear integration operator]  \label{prop:bounded-bilinear-integration-operator}
    Let $p,q>1$ satisfy $\frac1p+\frac1q\ge1$. The map
    \[
        \mathcal I_{p,q}:\EE^p\times\EE^q\to C([0,1]), \qquad \mathcal I_{p,q}(f,g):=I_{f,g},
    \]
    is a bounded bilinear operator. More precisely, there exists a constant $C_{p,q}>0$ such that
    \[
        \|I_{f,g}\|_{\infty}\le C_{p,q}\|f\|_{\EE^p}\|g\|_{\EE^q}, \qquad f\in\EE^p,\ g\in\EE^q.
    \]
\end{proposition}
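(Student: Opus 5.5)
The plan is to reduce to the conjugate case and then read off a bound from the global dyadic Young--Lo\`eve estimate \eqref{ineq:global-Young-Loeve}.

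\emph{Reduction to conjugate exponents.} By Corollary~\ref{cor:energy-space-full-theory}, fix once and for all conjugate exponents $\bar p\ge p$, $\bar q\ge q$. These depend only on $(p,q)$; for instance, take $\bar p=p$ and $\bar q=p/(p-1)\ge q$. By Proposition~\ref{prop:energy-space-monotonicity} we have $\|f\|_{\EE^{\bar p}}\le C\|f\|_{\EE^p}$ and $\|g\|_{\EE^{\bar q}}\le C\|g\|_{\EE^q}$. So it suffices to prove the bound for conjugate $p,q$.

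\emph{Bilinearity.} Existence of $I_{f,g}$ for all $f\in\EE^p$, $g\in\EE^q$ follows from Theorem~\ref{thm:uniform-continuity}. Bilinearity of the map then follows from Proposition~\ref{prop:extended-bilinearity-additivity}, since $\EE^p,\EE^q$ are vector spaces.

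\emph{The bound.} Apply \eqref{ineq:global-Young-Loeve} with $s=\xi=0$ and arbitrary $t$. This gives
\[
|I_{f,g}(t)|\le |f(0)|\,|g(t)-g(0)|+8L^{(p)}_f(0)L^{(q)}_g(0)\le 2\|f\|_\infty\|g\|_\infty+8L^{(p)}_f(0)L^{(q)}_g(0).
\]
It remains to bound $L^{(p)}_f(0)=\|\mathcal A^{(p)}_f\|_{\ell^p}+\frac12\|H_f\|_{\ell^p}$. By \eqref{ineq:Ap-bound},
\[
\|\mathcal A^{(p)}_f\|_{\ell^p}\le\|K_p\|_{\ell^1}\|\zetaen{f}{p}\|_{\ell^p(\N_{-1})},
\]
so $L^{(p)}_f(0)\le(\|K_p\|_{\ell^1}+1)\|f\|_{\EE^p}$. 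The same argument gives $L^{(q)}_g(0)\le(\|K_q\|_{\ell^1}+1)\|g\|_{\EE^q}$. Combining these with the display yields $\|I_{f,g}\|_\infty\le C_{p,q}\|f\|_{\EE^p}\|g\|_{\EE^q}$.

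The only subtle point is that Theorem~\ref{thm:dyadic-young-loeve} requires tail conditions on both $f$ and $g$. This is available here because both functions lie in energy spaces, so $H_f\in\ell^p$ and $H_g\in\ell^q$. An alternative route avoids the Young--Lo\`eve estimate altogether. On dyadic $t$, telescope $S_n^L$ from level $0$ using Proposition~\ref{prop:one-step-estimate} and \eqref{ineq:A_pA_q-l1}. This gives $|I^{\T}_{f,g}(t)|\le|f(0)(g(1)-g(0))|+\frac14\|\mathcal A_f\|_{\ell^p}\|\mathcal A_g\|_{\ell^q}$ for $t\in\T_0$. Finer dyadic $t$ would need the level-$\ell(t)$ sum as the starting point, which is messier, so I prefer the Young--Lo\`eve route. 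In either case the bound extends to all of $[0,1]$ by continuity and density of $\T$.
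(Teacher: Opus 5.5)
Your proposal is correct and follows the paper's proof: pass to a conjugate pair $(\bar p,\bar q)$, obtain bilinearity from Proposition~\ref{prop:extended-bilinearity-additivity}, and bound $\|I_{f,g}\|_\infty$ using the global Young--Lo\`eve estimate at $s=\xi=0$ together with \eqref{ineq:Ap-bound}. Your explicit choice $\bar p=p$, $\bar q=p/(p-1)$, and your use of the embedding constants from Proposition~\ref{prop:energy-space-monotonicity}, is slightly more careful than the paper: the paper writes $L^{(p)}_f(0)L^{(q)}_g(0)$ with the original exponents, although Theorem~\ref{thm:dyadic-young-loeve} is stated only for conjugate exponents.
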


\begin{proof}
    As in Corollary~\ref{cor:energy-space-full-theory}, choose conjugates $\bar p,\bar q>1$ such that $\bar p\ge p$, $\bar q\ge q$, $f\in\EE^p\subset\EE^{\bar p}$, and $g\in\EE^q\subset\EE^{\bar q}$. Thus, the map is well-defined, and its bilinearity follows from Proposition~\ref{prop:extended-bilinearity-additivity}. It remains to prove boundedness. By Theorem~\ref{thm:dyadic-young-loeve} with $s=\xi=0$, we have for every $t\in[0,1]$,
    \[
        \big|I_{f,g}(t)-f(0)\big(g(t)-g(0)\big)\big|\le 8L_f^{(p)}(0)L_g^{(q)}(0).
    \]
    Hence
    \[
        |I_{f,g}(t)|\le 2\|f\|_{\infty}\|g\|_{\infty}+8L_f^{(p)}(0)L_g^{(q)}(0).
    \]
    From the $\ell^p$-bound of $\mathcal A^{(p)}_f$ in \eqref{ineq:Ap-bound}, we have
    \[
        L_f^{(p)}(0)=\|\mathcal A^{(p)}_f\|_{\ell^p(\N_0)}+\frac12\|H_f\|_{\ell^p(\N_0)}\le C_p\|f\|_{\EE^p}, \qquad \text{with} \qquad C_p:=\|K_p\|_{\ell^1(\N_0)}+\frac12.
    \]
    Similarly,
    \[
        L_g^{(q)}(0)\le C_q\|g\|_{\EE^q}, \qquad C_q:=\|K_q\|_{\ell^1(\N_0)}+\frac12.
    \]
    Taking the supremum over $t\in[0,1]$ gives
    \[
        \|I_{f,g}\|_{\infty}\le \big(2+8C_pC_q\big)\|f\|_{\EE^p}\|g\|_{\EE^q}.
    \]
\end{proof}

The next result shows that any element of the space $\EE^p$ in Definition~\ref{def:energy-spaces} can be approximated by piecewise linear functions in the $\Vert \cdot \Vert_{\EE^p}$-norm.

\begin{proposition}[Density of finite Faber--Schauder expansions]   \label{prop:finite-fs-density}
    Let $p>1$. For $f\in\EE^p$ and $N\ge0$, define the level-$N$ Faber--Schauder truncation by
    \begin{equation}    \label{FS-truncation}
        \Pi_N f(t):=f(0)+\sum_{n=-1}^{N-1}\sum_{k\in I_n}\theta_{n,k}^f e_{n,k}(t), \qquad t\in[0,1].
    \end{equation}
    Then $\|f-\Pi_N f\|_{\EE^p} \xrightarrow{N\to\infty} 0$. In particular, finite Faber--Schauder expansions are dense in $\EE^p$.
\end{proposition}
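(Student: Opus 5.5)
The key observation is that $r_N:=f-\Pi_N f$ has an explicit Faber--Schauder expansion. By uniqueness of the representation \eqref{eq:FS-full}, and since $\Pi_N f$ agrees with $f$ at $0$ and at $1$, we have $r_N(0)=0$ and $\theta^{r_N}_{-1,0}=0$. Moreover, $\theta^{r_N}_{n,k}=0$ for $n\le N-1$ and $\theta^{r_N}_{n,k}=\theta^f_{n,k}$ for $n\ge N$. Indeed, $\Pi_N f$ is the piecewise linear interpolation of $f$ on $\T_N$. It is affine on each level-$N$ interval, so its coefficients at levels $n\ge N$ vanish. At levels $n<N$, the coefficient formula \eqref{eq:FS-coeff} only uses values at points of $\T_N$, where $\Pi_Nf=f$. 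I would first record this coefficient identity. All three parts of the norm are then controlled by tails of quantities already known to be finite.

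\emph{Level energy part.} From the coefficient identity, $\zetaen{r_N}{p}(n)=0$ for $-1\le n\le N-1$ and $\zetaen{r_N}{p}(n)=\zetaen{f}{p}(n)$ for $n\ge N$. Hence
\[
\Vert\zetaen{r_N}{p}\Vert_{\ell^p(\N_{-1})}=\Vert\zetaen{f}{p}\Vert_{\ell^p_{\ge N}},
\]
which tends to $0$ because $\zetaen{f}{p}\in\ell^p$.

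\emph{Schauder tail part.} We have $H_{r_N}(n)=H_f(\max(n,N))$. Therefore
\[
\Vert H_{r_N}\Vert_{\ell^p(\N_0)}^p=N\,H_f(N)^p+\Vert H_f\Vert^p_{\ell^p_{\ge N}}.
\]
The second term tends to $0$. For the first term, $H_f$ is nonincreasing, so
\[
N H_f(N)^p\le 2\sum_{n=\lceil N/2\rceil}^{N}H_f(n)^p\le 2\Vert H_f\Vert^p_{\ell^p_{\ge \lceil N/2\rceil}}\to0 .
\]
This monotonicity trick is the one genuinely non-routine step, and it is where I expect the only subtlety. A naive bound would leave a term like $N H_f(N)^p$ that is not obviously small.

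\emph{Uniform part.} The estimate \eqref{ineq:N-dyadic-interpol} in the proof of Lemma~\ref{lem:dyadic-approach-f}, applied with $h=f$, gives $\|f-\Pi_Nf\|_\infty\le\frac12H_f(N)$. This tends to $0$ since $H_f\in\ell^p$ forces $H_f(N)\to0$.

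Summing the three estimates gives $\|f-\Pi_Nf\|_{\EE^p}\to0$. Density follows because each $\Pi_Nf$ is a finite expansion, and it lies in $\EE^p$ since it has only finitely many nonzero coefficients.
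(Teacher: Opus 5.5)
Your proposal is correct and follows the paper's proof step for step. It uses the same coefficient identity for $f-\Pi_N f$, the same tail identities for the level energies and for $H_{f-\Pi_N f}(n)=H_f(\max\{n,N\})$, the same monotonicity bound on the $N H_f(N)^p$ term, and the same uniform bound via \eqref{ineq:N-dyadic-interpol}. You additionally justify the coefficient identity through the interpolation property of $\Pi_N f$ on $\T_N$, which the paper leaves implicit.
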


\begin{proof}
    Set $h_N:=f-\Pi_N f$. Then the Faber--Schauder coefficients of $h_N$ vanish at levels $-1,0,\dots,N-1$, and agree with those of $f$ at levels $n\ge N$.

    First, by the tail estimate \eqref{ineq:N-dyadic-interpol}, for every $t\in[0,1]$,
    \[
        |h_N(t)|\le \frac12\sum_{m=N}^{\infty}2^{-\frac m2}\sup_{k\in I_m}\big|\theta_{m,k}^f\big|=\frac12H_f(N).
    \]
    Since $H_f\in\ell^p(\N_0)$, we have $H_f(N)\to0$, and hence
    \[
        \|h_N\|_{\infty}\xrightarrow{N\to\infty}0.
    \]

    The level energy part satisfies
    \[
        \|\zetaen{h_N}{p}\|_{\ell^p(\N_{-1})}^p=\sum_{n=N}^{\infty}\big(\zetaen{f}{p}(n)\big)^p\xrightarrow{N\to\infty}0.
    \]

    It remains to control the Schauder tail part. Since $h_N$ has no coefficients below level $N$,
    \[
        H_{h_N}(n)=H_f(\max\{n,N\}), \qquad n\ge0.
    \]
    Therefore
    \[
        \|H_{h_N}\|_{\ell^p(\N_0)}^p = \sum_{n=0}^{N-1} \big(H_f(N)\big)^p + \sum_{n=N}^{\infty} \big(H_f(n)\big)^p = N \big(H_f(N)\big)^p + \sum_{n=N}^{\infty} \big(H_f(n)\big)^p.
    \]
    The second term tends to zero because $H_f\in\ell^p(\N_0)$. For the first term, note that $H_f$ is nonincreasing. Hence
    \[
        \frac N2 \big(H_f(N)\big)^p\le \sum_{n=\lfloor N/2\rfloor}^{N-1} \big(H_f(n)\big)^p \xrightarrow{N\to\infty} 0.
    \]
    Thus $\|H_{h_N}\|_{\ell^p(\N_0)}\xrightarrow{N\to\infty}0$.
    
    Combining the three estimates gives $\|f - \Pi_N f\|_{\EE^p}\to0$ as $N \to \infty$.
\end{proof}

With the energy spaces in place, we now provide an analogue of Ciesielski's isomorphism theorem~\cite{Ciesielski:isomorphism}. In 1960, Ciesielski proved that the H\"older space $C^{\alpha}([0, 1])$ for $\alpha \in (0, 1)$ is isomorphic to a space $\ell^{\infty}$ of real sequences, equipped with the supremum norm:
\begin{align*}
    \big(C^{\alpha}([0, 1]), \Vert \cdot \Vert_{C^{\alpha}}\big) &\longrightarrow \big(\ell^{\infty}, \Vert \cdot \Vert_{\ell^\infty}\big)
    \\
    f \quad &\mapsto \Big(f(0), f(1)-f(0), \big(2^{n(\alpha-\frac12)} \theta^f_{n,k}\big)_{n\ge0,\ k\in I_n}\Big).
\end{align*}
In what follows, we prove an analogous coefficient-space representation for the Banach space $(\EE^p,\|\cdot\|_{\EE^p})$.

\begin{definition}[Coefficient energy space]
\label{def:coefficient-energy-space}
    For a coefficient family $a=(a_{n,k})_{n\in\N_{-1},\,k\in I_n}$, define
    \[
        \zeta_a^{(p)}(-1):=2^{1-\frac1p}|a_{-1,0}| \qquad \text{and} \qquad \zeta_a^{(p)}(n):=\Big(2^{-\frac{np}{2}}\sum_{k\in I_n}|a_{n,k}|^p\Big)^{\frac1p} \quad \text{for } n \ge 0.
    \]
    Also define
    \[
        H_a(n):=\sum_{m=n}^{\infty}2^{-\frac m2}\sup_{k\in I_m}|a_{m,k}|, \qquad n\ge0.
    \]
    We define the coefficient energy space by
    \[
        \mathfrak e^p:=\Big\{(c,a):c\in\R,\ \zeta_a^{(p)}\in\ell^p(\N_{-1}),\ H_a\in\ell^p(\N_0)\Big\},
    \]
    equipped with the norm
    \[
        \|(c,a)\|_{\mathfrak e^p}:=|c|+\|\zeta_a^{(p)}\|_{\ell^p(\N_{-1})}+\|H_a\|_{\ell^p(\N_0)}.
    \]
\end{definition}

\begin{theorem}[Coefficient-space representation]
\label{thm:coefficient-space-isomorphism}
    Let $p>1$. The Faber--Schauder coefficient map
    \[
        \mathscr C_p:\EE^p\to\mathfrak e^p, \qquad \mathscr C_p f:=\Big(f(0),\big(\theta_{n,k}^f\big)_{n\in\N_{-1},\,k\in I_n}\Big),
    \]
    is a Banach space isomorphism. Its inverse is the reconstruction map
    \[
        \mathscr R_p(c,a)(t):=c+\sum_{n=-1}^{\infty}\sum_{k\in I_n}a_{n,k}e_{n,k}(t), \qquad t\in[0,1].
    \]
\end{theorem}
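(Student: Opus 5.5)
The plan is to verify that $\mathscr C_p$ is a bounded linear bijection with bounded inverse $\mathscr R_p$, and then to invoke the bounded inverse theorem. Alternatively, one can obtain two-sided norm bounds directly.

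\emph{$\mathscr C_p$ is bounded and injective.} Linearity of $\mathscr C_p$ follows from linearity of the coefficient formulae \eqref{eq:FS-coeff-1} and \eqref{eq:FS-coeff}. By construction, $\zeta^{(p)}_{\theta^f}=\zetaen{f}{p}$ and $H_{\theta^f}=H_f$, so
\[
    \|\mathscr C_p f\|_{\mathfrak e^p}=|f(0)|+\|\zetaen{f}{p}\|_{\ell^p(\N_{-1})}+\|H_f\|_{\ell^p(\N_0)}\le \|f\|_{\EE^p}.
\]
Injectivity follows from uniqueness of the Faber--Schauder representation \eqref{eq:FS-full}: if all coefficients vanish and $f(0)=0$, then $f$ vanishes on $\T$, and hence everywhere by continuity.

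\emph{$\mathscr R_p$ is well defined and lands in $\EE^p$.} Take $(c,a)\in\mathfrak e^p$. Since $H_a\in\ell^p(\N_0)$, we have $H_a(0)<\infty$. At each level $n\ge0$, at most one $e_{n,k}$ is nonzero at any given point, and its height is at most $\tfrac12 2^{-n/2}$. Hence the level-$n$ block is bounded in sup norm by $\tfrac12 2^{-n/2}\sup_k|a_{n,k}|$. These bounds are summable, so the series converges uniformly, exactly as in the tail estimate \eqref{ineq:N-dyadic-interpol}, and $\mathscr R_p(c,a)\in\Cn$. This gives
\[
    \|\mathscr R_p(c,a)\|_\infty\le |c|+|a_{-1,0}|+\tfrac12H_a(0).
\]
The key step is to identify the Faber--Schauder coefficients of $u:=\mathscr R_p(c,a)$ with $a$. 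First, $u(0)=c$ because every $e_{n,k}(0)=0$. Next, $u(1)-u(0)=a_{-1,0}$ because $e_{n,k}(1)=0$ for $n\ge0$. For $n\ge0$, the second-difference functional in \eqref{eq:FS-coeff} is continuous for uniform convergence, so it may be applied term by term. It annihilates the affine term and every $e_{m,\ell}$ with $m\ne n$:
\begin{itemize}
    \item for $m>n$, $e_{m,\ell}$ vanishes at all level-$(n+1)$ points;
    \item for $m<n$, $e_{m,\ell}$ is affine on $I_{n,k}$, so its second difference there is zero.
\end{itemize}
For $m=n$, the functional gives $\delta_{\ell k}$, since $e_{n,k}(t_{n+1,2k+1})=\tfrac12 2^{-n/2}$. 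Therefore $\theta^u_{n,k}=a_{n,k}$.

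\emph{Conclusion.} The identification gives $\zetaen{u}{p}=\zeta^{(p)}_a$ and $H_u=H_a$. Combined with the sup-norm bound and the estimate $|a_{-1,0}|\le 2^{\frac1p-1}\|\zeta^{(p)}_a\|_{\ell^p(\N_{-1})}$, this yields
\[
    \|\mathscr R_p(c,a)\|_{\EE^p}\le C_p\|(c,a)\|_{\mathfrak e^p}.
\]
So $\mathscr R_p$ is bounded. The identification also gives $\mathscr C_p\mathscr R_p=\mathrm{id}$. In the other direction, $\mathscr R_p\mathscr C_p f=f$ is precisely the expansion \eqref{eq:FS-full}, which converges uniformly for $f\in\EE^p$ by Proposition~\ref{prop:finite-fs-density}. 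Hence $\mathscr C_p$ is a bijective bounded linear map with bounded inverse $\mathscr R_p$, i.e.\ a Banach space isomorphism. This uses completeness of $\EE^p$ from Proposition~\ref{prop:energy-banach}; completeness of $\mathfrak e^p$ then follows by transport through $\mathscr C_p$ and need not be proved separately.

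The main obstacle is the coefficient identification $\theta^{\mathscr R_p(c,a)}=a$. It needs care with uniform convergence to justify applying the functional termwise, and with the fact that $e_{m,\ell}$ for $m<n$ is affine on each level-$n$ interval. Everything else is bookkeeping.
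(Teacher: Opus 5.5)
Your proposal is correct and follows the paper's proof essentially step for step. You bound $\|\mathscr C_p f\|_{\mathfrak e^p}\le\|f\|_{\EE^p}$, use $H_a(0)<\infty$ for uniform convergence of the reconstruction series, identify the coefficients of $\mathscr R_p(c,a)$ with $a$ to bound $\|\mathscr R_p(c,a)\|_{\EE^p}$ by a multiple of $\|(c,a)\|_{\mathfrak e^p}$, and conclude from $\mathscr R_p\mathscr C_p=\mathrm{id}$, $\mathscr C_p\mathscr R_p=\mathrm{id}$ and completeness of $\EE^p$; your termwise check that $\theta^{\mathscr R_p(c,a)}_{n,k}=a_{n,k}$ (the second-difference functional kills every $e_{m,\ell}$ with $m\neq n$) just makes explicit a step the paper asserts directly.
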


\begin{proof}
    For $f\in\EE^p$, we have
    \[
        \|\mathscr C_p f\|_{\mathfrak e^p}=|f(0)|+\|\zetaen{f}{p}\|_{\ell^p(\N_{-1})}+\|H_f\|_{\ell^p(\N_0)}\le \|f\|_{\EE^p}.
    \]
    Hence $\mathscr C_p$ is bounded.

    Conversely, let $(c,a)\in\mathfrak e^p$. Since $H_a(0)<\infty$, the series
    \[
        \sum_{n=0}^{\infty}\sum_{k\in I_n}a_{n,k}e_{n,k}(t), \qquad t \in [0, 1]
    \]
    converges uniformly on $[0,1]$, because at each level $n\ge0$ at most one Faber--Schauder function $e_{n,k}$ is nonzero at a given point and its height is at most $2^{-\frac n2-1}$. Therefore $\mathscr R_p(c,a)$ is a well-defined continuous function.

    Moreover,
    \[
        \|\mathscr R_p(c,a)\|_{\infty}\le |c|+|a_{-1,0}|+\frac12H_a(0)\le |c|+\|\zeta_a^{(p)}\|_{\ell^p(\N_{-1})}+\frac12\|H_a\|_{\ell^p(\N_0)}.
    \]
    The Faber--Schauder coefficients of $\mathscr R_p(c,a)$ are precisely $a_{n,k}$, and its initial value is $c$. Hence
    \[
        \|\mathscr R_p(c,a)\|_{\EE^p}\le |c|+2\|\zeta_a^{(p)}\|_{\ell^p(\N_{-1})}+\frac32\|H_a\|_{\ell^p(\N_0)}\le 2\|(c,a)\|_{\mathfrak e^p}.
    \]
    Thus $\mathscr R_p$ maps $\mathfrak e^p$ boundedly into $\EE^p$.

    Finally, the identities
    \[
        \mathscr R_p\mathscr C_p f=f \quad \text{for} \quad f\in\EE^p, \qquad \text{and} \qquad 
        \mathscr C_p\mathscr R_p(c,a)=(c,a) \quad \text{for} \quad (c,a)\in\mathfrak e^p,
    \]
    follow from the uniqueness of the Faber--Schauder expansion. Therefore, $\mathscr C_p$ is a bounded linear bijection with bounded inverse $\mathscr R_p$. Since $\EE^p$ is Banach by Proposition~\ref{prop:energy-banach}, $\mathfrak e^p$ is Banach as well, and $\mathscr C_p$ is a Banach space isomorphism.
\end{proof}

We conclude this subsection with a remark on the local nature of the energy condition. Although the energy condition is defined through the Faber--Schauder expansion on $[0, 1]$, it is local with respect to finite dyadic decompositions. In particular, the construction of the Faber--Schauder integral is stable under extension to dyadically compatible larger intervals.

\begin{remark}[Dyadic locality of the energy condition]
\label{rem:dyadic-locality}
    Although the Faber--Schauder expansion is defined relative to a fixed interval, the energy conditions defining $\EE^p$ are local in the dyadic sense. For example, suppose that a function $F$ is defined on $[0,2]$, and consider the Haar--Schauder system on $[0,2]$. Then, from level $1$ onward, each Schauder function is supported either in $[0,1]$ or in $[1,2]$. The only coefficients which interact with the boundary point $1$ are the affine coefficient and the finitely many coarse coefficients at the first levels.

    Consequently, the conditions $\zetaen{F}{p}\in\ell^p$ and $H_F\in\ell^p$ are unaffected by these finitely many coarse coefficients. If the restriction of $F$ to $[0,1]$ is already known to satisfy the energy conditions, then verifying the energy conditions on the enlarged interval reduces to checking the tail coefficients supported on the newly added dyadic piece $[1,2]$. These coefficients are determined by the same second-order increment formula as in \eqref{eq:FS-coeff}, applied locally on $[1,2]$.

    Thus, if functions $f,g$ on $[0,1]$ are extended to functions $F,G$ on a dyadically compatible larger interval, one does not need to recompute all coarse coefficients of the enlarged expansion in order to verify the energy conditions. It is enough to check the tail coefficients on the newly added dyadic pieces; the finitely many coefficients crossing the old boundary do not affect membership in the energy spaces.

    The integral itself reflects the same locality. On the original interval $[0,1]$, the dyadic left sums computed in the enlarged dyadic system agree with the original dyadic left sums after passing to a cofinal subsequence of dyadic levels. Hence, on $[0,1]$, the Faber--Schauder integral constructed on the enlarged interval coincides with the original Faber--Schauder integral.
\end{remark}

\medskip

\subsection{Approximation by classical Riemann--Stieltjes integrals}
\label{subsec:rs-approximation}

The density of finite Faber--Schauder expansions (Proposition~\ref{prop:finite-fs-density}) and the bounded bilinearity of the Faber--Schauder integral (Proposition~\ref{prop:bounded-bilinear-integration-operator}) imply that the Faber--Schauder integral can be recovered as a uniform limit of classical Riemann--Stieltjes integrals. This gives a direct approximation interpretation of the construction.

\begin{theorem}[Approximation by Riemann--Stieltjes integrals]  \label{thm:classical-rs-approximation}
    Let $p,q>1$ be conjugates, and let $f\in\EE^p$, $g\in\EE^q$. For $N,M\ge0$, consider continuous piecewise linear functions $\Pi_N f$ and $\Pi_M g$ from \eqref{FS-truncation}. Then
    \[
        \sup_{t\in[0,1]}\left|\int_0^t \Pi_N f\,d(\Pi_M g)-I_{f,g}(t)\right|\xrightarrow{N,M\to\infty}0,
    \]
    where the integral on the left-hand side is the classical Riemann--Stieltjes integral.
\end{theorem}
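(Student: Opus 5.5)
The plan has two steps. First, for piecewise linear functions the Faber--Schauder integral coincides with the Riemann--Stieltjes integral. Then we pass to the limit using the bounded bilinearity of Proposition~\ref{prop:bounded-bilinear-integration-operator} together with the density result of Proposition~\ref{prop:finite-fs-density}.

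\textbf{Step 1: identification for finite truncations.} Fix $N,M$ and set $K:=\max\{N,M\}$. Then $\Pi_N f$ and $\Pi_M g$ are continuous and affine on every level-$K$ dyadic interval. They have only finitely many nonzero coefficients, so both lie in every energy space, in particular in $\EE^p$ and $\EE^q$, and $I_{\Pi_N f,\Pi_M g}$ exists. We show that $I_{\Pi_N f,\Pi_M g}(t)=\int_0^t\Pi_N f\,d(\Pi_M g)$ for all $t\in[0,1]$.

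Fix $t\in\T$ and $n\ge\max\{K,\ell(t)\}$. On each level-$n$ interval $I_{n,k}\subset[0,t]$ both functions are affine, so with $\Delta_k f,\Delta_k g$ denoting the increments of $\Pi_N f,\Pi_M g$ over $I_{n,k}$,
\[
\int_{I_{n,k}}\Pi_N f\,d(\Pi_M g)=\Pi_N f(t_{n,k})\,\Delta_k g+\tfrac12\Delta_k f\,\Delta_k g .
\]
Summing over $k$ gives
\[
\int_0^t\Pi_Nf\,d(\Pi_Mg)-S_n^L(t;\Pi_Nf,\Pi_Mg)=\tfrac12\sum_k\Delta_kf\,\Delta_kg .
\]
Both increments are $O(2^{-n})$ with constants depending on the slopes, so the right-hand side is $O(2^{-n})$ and tends to $0$. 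Hence the dyadic integral equals the Riemann--Stieltjes integral on $\T$. Both sides are continuous in $t$, so the identity extends to $[0,1]$.

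\textbf{Step 2: bilinear estimate.} By Step 1 and bilinearity (Proposition~\ref{prop:extended-bilinearity-additivity}),
\[
\int_0^\cdot\Pi_Nf\,d(\Pi_Mg)-I_{f,g}=I_{\Pi_Nf-f,\;\Pi_Mg}+I_{f,\;\Pi_Mg-g}.
\]
Proposition~\ref{prop:bounded-bilinear-integration-operator} then bounds the supremum norm of this difference by
\[
C_{p,q}\Big(\|\Pi_Nf-f\|_{\EE^p}\,\|\Pi_Mg\|_{\EE^q}+\|f\|_{\EE^p}\,\|\Pi_Mg-g\|_{\EE^q}\Big).
\]
By Proposition~\ref{prop:finite-fs-density}, $\|\Pi_Nf-f\|_{\EE^p}\to0$ and $\|\Pi_Mg-g\|_{\EE^q}\to0$. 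The same result gives $\|\Pi_Mg\|_{\EE^q}\le\|g\|_{\EE^q}+\|\Pi_Mg-g\|_{\EE^q}$, which is bounded uniformly in $M$. Thus the bound tends to $0$ as $N,M\to\infty$ jointly.

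\textbf{Main obstacle.} Step 2 is formal, so the only real work is Step 1. Its key point is that the dyadic left sums of piecewise linear functions differ from the exact Riemann--Stieltjes integral only by the quadratic covariation-type term. This term vanishes in the limit because both functions are Lipschitz; the energy-space machinery is not needed for it.
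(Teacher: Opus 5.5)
Your proposal is correct and follows essentially the same route as the paper: identify $I_{\Pi_N f,\Pi_M g}$ with the Riemann--Stieltjes integral on $\T$ and extend by continuity, then conclude via bilinearity, Proposition~\ref{prop:bounded-bilinear-integration-operator}, and Proposition~\ref{prop:finite-fs-density}. The only minor difference is in Step 1, where you verify the convergence of the dyadic left sums explicitly through the affine-piece identity with error $\tfrac12\sum_k\Delta_kf\,\Delta_kg=O(2^{-n})$; the paper instead simply invokes convergence of Riemann--Stieltjes sums for a continuous integrand against a bounded-variation integrator.
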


\begin{proof}
    Since $\Pi_N f$ and $\Pi_M g$ are continuous piecewise linear functions on $[0,1]$, the function $\Pi_N f$ is continuous and $\Pi_M g$ has bounded variation. Hence, the classical Riemann--Stieltjes integral
    \[
        \int_0^t \Pi_N f\,d(\Pi_M g)
    \]
    exists for every $t\in[0,1]$.

    We first show that this classical integral agrees with the Faber--Schauder integral of the pair $(\Pi_N f,\Pi_M g)$. Fix $t\in\T$. For every $m\ge\ell(t)$, the sum $S_m^L(t;\Pi_N f,\Pi_M g)$ is a Riemann--Stieltjes sum over the dyadic partition of $[0,t]$, with mesh size $2^{-m}$. Since $\Pi_N f$ is continuous and $\Pi_M g$ has bounded variation, these sums converge to the classical Riemann--Stieltjes integral. Therefore
    \[
        I_{\Pi_N f,\Pi_M g}^{\T}(t)=\int_0^t \Pi_N f\,d(\Pi_M g), \qquad t\in\T.
    \]
    Both sides are continuous functions of $t$, so the identity extends to all $t\in[0,1]$:
    \[
        I_{\Pi_N f,\Pi_M g}(t)=\int_0^t \Pi_N f\,d(\Pi_M g), \qquad t\in[0,1].
    \]

    By Proposition~\ref{prop:finite-fs-density},
    \[
        \Pi_N f \xrightarrow{N\to\infty} f \quad\text{in }\EE^p, \qquad \Pi_M g \xrightarrow{M\to\infty} g \quad\text{in }\EE^q.
    \]
    Using bilinearity and Proposition~\ref{prop:bounded-bilinear-integration-operator}, we obtain
    \[
        \|I_{\Pi_N f,\Pi_M g}-I_{f,g}\|_{\infty}
        \le C_{p,q}\|\Pi_N f-f\|_{\EE^p}\|\Pi_M g\|_{\EE^q} + C_{p,q}\|f\|_{\EE^p}\|\Pi_M g-g\|_{\EE^q}.
    \]
    Since $\Pi_M g\to g$ in $\EE^q$, the sequence $\|\Pi_M g\|_{\EE^q}$ is bounded. Hence, the right-hand side converges to $0$ as $N,M\to\infty$. This proves the result.
\end{proof}

This approximation theorem is specific to the energy-space framework. In the classical Young theory, continuity of the integral is usually formulated with respect to H\"older or variation topologies, and approximation by smoother paths requires convergence in those topologies, which control not only the uniform distance between paths but also their regularity through H\"older seminorms or variation seminorms. Here, by contrast, the approximants are not auxiliary smooth paths chosen to converge in a strong topology; they are the canonical dyadic piecewise linear interpolants determined solely by finitely many observations of the paths on the dyadic grids. Theorem~\ref{thm:classical-rs-approximation} shows that the classical Riemann--Stieltjes integrals of these finite-dimensional approximations converge uniformly to the Faber--Schauder integral. Thus, the construction is compatible with classical bounded-variation integration at every finite level, while the limiting pair $(f,g)$ itself may lie outside the finite-variation Young regime.

\medskip

\subsection{Energy regularity of the Faber--Schauder integral}  \label{subsec:energy-regularity-integral}

We close this section with a permanence property of the Faber--Schauder integral. The general theory developed above gives
\[
    (f,g)\mapsto I_{f,g}
\]
as a bounded bilinear map from $\EE^p\times\EE^q$ into $C([0,1])$. Under an additional decay condition on the integrand, the integral inherits the energy regularity of the integrator.

For $r>1$ and $h\in\EE^r$, recall the quantity $L_h^{(r)}$ from \eqref{def:D-and-L}.

\begin{theorem}[Integrator-side energy regularity]
\label{thm:integral-energy-regularity}
    Let $p,q>1$ be conjugates, let $f\in\EE^p$ and $g\in\EE^q$. Suppose that there exist constants $C>0$ and $\rho>0$ such that
    \[
        L_f^{(p)}(N)\le C2^{-\rho N}, \qquad N\ge0.
    \]
    Then $I_{f,g}\in\EE^q$.
\end{theorem}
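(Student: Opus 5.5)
The plan is to compute the Faber--Schauder coefficients of $F:=I_{f,g}$ directly from the local expansion of Lemma~\ref{lem:refinement-remainders}(i). Then I bound $\zetaen{F}{q}$ and $H_F$ level by level, using the decay hypothesis on $L_f^{(p)}$ only through a supremum factor.

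\emph{Step 1: coefficient formula.} Since $\|F\|_\infty<\infty$ by Proposition~\ref{prop:bounded-bilinear-integration-operator}, only the coefficient parts of $\|F\|_{\EE^q}$ need work. The affine level $\theta^F_{-1,0}=F(1)-F(0)$ is finite, so it is harmless. Fix $n\ge0$, $k\in I_n$, and write $a=t_{n,k}$, $u=t_{n+1,2k+1}$, $b=t_{n,k+1}$. Apply Lemma~\ref{lem:refinement-remainders}(i) at level $n+1$ to the two children $[a,u]$ and $[u,b]$, with remainders $R^L_{n,k}$ and $R^R_{n,k}$. Using $g_{n+1,L}(k)-g_{n+1,R}(k)=2^{-n/2}\theta^g_{n,k}$, this gives
\[
    \theta^F_{n,k}=f(a)\,\theta^g_{n,k}-2^{\frac n2}\big(f(u)-f(a)\big)g_{n+1,R}(k)+2^{\frac n2}\big(R^L_{n,k}-R^R_{n,k}\big).
\]
From the proof of that lemma, each remainder is a sum over $m\ge n+1$ of local sums $\sum_{j:\,I_{m,j}\subset I^\sigma_{n,k}} f_{m+1,L}(j)g_{m+1,R}(j)$.

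\emph{Step 2: level energies.} I take the $\ell^q(I_n)$ norm of each term and multiply by $2^{-n/2}$.
\begin{itemize}
    \item The first term contributes at most $\|f\|_\infty\zetaen{g}{q}(n)$.
    \item For the second term, I bound $|f(u)-f(a)|\le D^{(p)}_f(n)\le L_f^{(p)}(n)$ by \eqref{ineq:two-endpoint-estimates}, and apply Lemma~\ref{lem:child-estimate} to $g_{n+1,R}$. This gives at most $\tfrac12 C2^{-\rho n}\mathcal A^{(q)}_g(n)$.
    \item The remainder term is the key point. A uniform bound such as Theorem~\ref{thm:dyadic-young-loeve} loses a factor $2^{n/q}$ in the $\ell^q$ sum over $k$, and this cannot be absorbed without assuming $\rho>1/q$. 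So the $\ell^q$ structure must be kept. For each $m$, apply H\"older locally on each $I^\sigma_{n,k}$. Bound the local $f$-factor $\big(\sum_{j\subset I^\sigma_{n,k}}|f_{m+1,L}(j)|^p\big)^{1/p}$ by the global quantity $\tfrac12\mathcal A^{(p)}_f(m)$, uniformly in $k$. The remaining local $g$-factors then assemble, by disjointness, into $\big(\sum_{j\in I_m}|g_{m+1,R}(j)|^q\big)^{1/q}\le\tfrac12\mathcal A^{(q)}_g(m)$. Minkowski over $m$ then gives a contribution of at most $\tfrac12\sum_{m\ge n+1}\mathcal A^{(p)}_f(m)\mathcal A^{(q)}_g(m)$.
\end{itemize}
Since $\mathcal A^{(p)}_f(m)\le L^{(p)}_f(m)\le C2^{-\rho m}$, the remainder bound becomes $C\sum_{m\ge n}2^{-\rho(m-n)}\,2^{-\rho n}\mathcal A^{(q)}_g(m)$. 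This lies in $\ell^q(\N_0)$ by Young's convolution inequality with the geometric kernel and \eqref{ineq:Ap-bound}. Collecting the three contributions, $\zetaen{F}{q}\in\ell^q(\N_{-1})$.

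\emph{Step 3: Schauder tails.} Multiply $\sup_k|\theta^F_{n,k}|$ by $2^{-n/2}$ and use $\sup_k\le\|\cdot\|_{\ell^q(I_n)}$ on the last two terms. This gives
\[
    2^{-\frac n2}\sup_k|\theta^F_{n,k}|\le\|f\|_\infty 2^{-\frac n2}\sup_k|\theta^g_{n,k}|+e(n),
\]
where $e(n)$ is the sum of the two error bounds from Step~2, so $e(n)\le C'2^{-\rho n}b(n)$ with $b:=\mathcal A^{(q)}_g+\sum_{m\ge\cdot}2^{-\rho(m-\cdot)}\mathcal A^{(q)}_g(m)$, a sequence in $\ell^q$. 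Summing over $m\ge n$ yields $H_F(n)\le\|f\|_\infty H_g(n)+\sum_{m\ge n}e(m)$. The first piece is in $\ell^q$ because $H_g\in\ell^q$. Since $b$ is bounded, the second piece is $O(2^{-\rho n})$, hence in $\ell^q$. Therefore $F\in\EE^q$.

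The main obstacle is Step~2's remainder estimate: the bound has to be kept in $\ell^q$ over $k$ rather than uniform, and the sup-in-$k$ step for the $f$-factor (bounding a local $\ell^p$ sum by the global one) is where the argument must be carefully justified.
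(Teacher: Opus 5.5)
Your proposal is correct and is essentially the paper's proof: the same two-child coefficient identity for $\theta^{I_{f,g}}_{n,k}$; the same level-wise $\ell^q$ remainder bound, obtained by block-wise H\"older with the local $f$-factor dominated by the global $\tfrac12\mathcal A^{(p)}_f(m)$ and then Minkowski over $m$ (this is exactly the paper's estimate $(\sum_{j\in I_N}|R_{N,j}|^q)^{1/q}\le\tfrac14\sum_{m\ge N}\mathcal A^{(p)}_f(m)\mathcal A^{(q)}_g(m)$); and the same summation over levels for the Schauder tail. The differences are cosmetic: you bound the cross term via $D^{(p)}_f(n)$ and the decay hypothesis rather than by $\tfrac14\mathcal A^{(p)}_f(n)\mathcal A^{(q)}_g(n)$, and you get $\ell^q$-membership of the remainder sequence from Young's convolution inequality rather than the paper's H\"older tail bound $\sum_{m\ge N}\mathcal A^{(p)}_f(m)\mathcal A^{(q)}_g(m)\le C2^{-\rho N}\|\mathcal A^{(q)}_g\|_{\ell^q_{\ge N}}$.
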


\begin{proof}
    Set $h(t):=I_{f,g}(t)$ for $t\in[0,1]$, and write
    \[
        P_{f,g}(N) := \sum_{n=N}^{\infty} \mathcal A_f^{(p)}(n) \, \mathcal A_g^{(q)}(n), \qquad N \ge 0.
    \]
    Since $\mathcal A_f^{(p)}(n)\le L_f^{(p)}(n)\le C2^{-\rho n}$ and $\mathcal A_g^{(q)}\in\ell^q(\N_0)$, H\"older's inequality gives for some $C_{p,q,\rho}>0$
    \[
        P_{f,g}(N)\le C\sum_{n=N}^{\infty}2^{-\rho n}\mathcal A_g^{(q)}(n)\le C_{p,q,\rho}\,2^{-\rho N}\Vert\mathcal A_g^{(q)}\Vert_{\ell^q_{\ge N}}.
    \]
    Since $N\mapsto\Vert\mathcal A_g^{(q)}\Vert_{\ell^q_{\ge N}}$ is nonincreasing, the same bound also gives
    \[
        \sum_{m=N}^{\infty}P_{f,g}(m) \le C_{p,q,\rho}2^{-\rho N}\Vert\mathcal A_g^{(q)}\Vert_{\ell^q_{\ge N}},
    \]
    possibly with a different constant $C_{p,q,\rho}$. In particular,
    \[
        P_{f,g}\in\ell^q(\N_0), \qquad \sum_{N=0}^{\infty}\bigg(\sum_{m=N}^{\infty}P_{f,g}(m)\bigg)^q<\infty.
    \]

    We first estimate the level energies of $h$. For a level-$n$ dyadic interval, consider three dyadic points $t_{n,k}$, $t_{n+1,2k+1}$, and $t_{n,k+1}$. By Lemma~\ref{lem:refinement-remainders}, applied to the two level-$(n+1)$ intervals $[t_{n,k},t_{n+1,2k+1}]$ and $[t_{n+1,2k+1},t_{n,k+1}]$, there are remainders $R_{n+1,2k}$ and $R_{n+1,2k+1}$ such that
    \[
        h(t_{n+1,2k+1})-h(t_{n,k}) = f(t_{n,k})\big(g(t_{n+1,2k+1})-g(t_{n,k})\big)+R_{n+1,2k},
    \]
    and
    \[
        h(t_{n,k+1})-h(t_{n+1,2k+1}) = f(t_{n+1,2k+1})\big(g(t_{n,k+1})-g(t_{n+1,2k+1})\big)+R_{n+1,2k+1}.
    \]
    Moreover, from the proof of Lemma~\ref{lem:refinement-remainders} we claim the level-wise estimate
    \begin{equation}    \label{level-wise-estimate}
        \bigg(\sum_{j\in I_N}\big|R_{N,j}\big|^q\bigg)^{\frac1q}\le \frac14 P_{f,g}(N), \qquad N\ge0.
    \end{equation}
    Indeed, for fixed $m\ge N$, the level-$m$ refinement contributions inside the level-$N$ intervals form disjoint blocks. If $\Delta_{m,j}$ denotes the contribution of level $m$ inside $[t_{N,j},t_{N,j+1}]$, then
    \[
        R_{N,j}=\sum_{m=N}^{\infty}\Delta_{m,j}, \qquad \Delta_{m,j} := \sum_{k:\,I_{m,k}\subset[t_{N,j},t_{N,j+1}]} f_{m+1,L}(k)g_{m+1,R}(k).
    \]
    By Minkowski's inequality,
    \[
        \bigg(\sum_{j\in I_N}|R_{N,j}|^q\bigg)^{\frac1q}
        \le \sum_{m=N}^{\infty}\bigg(\sum_{j\in I_N}|\Delta_{m,j}|^q\bigg)^{\frac1q}.
    \]
    For each fixed $m\ge N$, set $K_{m,j}:=\{k\in I_m:I_{m,k}\subset[t_{N,j},t_{N,j+1}]\}$ for $j\in I_N$. The sets $K_{m,j}$ are disjoint and their union is $I_m$. For $j\in I_N$, H\"older's inequality on the block $K_{m,j}$ gives
    \[
        |\Delta_{m,j}|
        \le \bigg(\sum_{k\in K_{m,j}}\big|f_{m+1,L}(k)\big|^p\bigg)^{\frac1p} \bigg(\sum_{k\in K_{m,j}}\big|g_{m+1,R}(k)\big|^q\bigg)^{\frac1q}.
    \]
    Write
    \[
        F_{m,j}:=\bigg(\sum_{k\in K_{m,j}}\big|f_{m+1,L}(k)\big|^p\bigg)^{\frac1p}, \qquad
        G_{m,j}:=\bigg(\sum_{k\in K_{m,j}}\big|g_{m+1,R}(k)\big|^q\bigg)^{\frac1q}.
    \]
    Then
    \[
        \bigg(\sum_{j\in I_N}|\Delta_{m,j}|^q\bigg)^{\frac1q}
        \le \bigg(\sum_{j\in I_N}F_{m,j}^qG_{m,j}^q\bigg)^{\frac1q}
        \le \Big( \sup_{j\in I_N}F_{m,j} \Big) \bigg(\sum_{j\in I_N}G_{m,j}^q\bigg)^{\frac1q}.
    \]
    Since
    \[
        \sup_{j\in I_N}F_{m,j}\le\bigg(\sum_{j\in I_N}F_{m,j}^p\bigg)^{\frac1p}
        = \bigg(\sum_{k\in I_m}\big|f_{m+1,L}(k)\big|^p\bigg)^{\frac1p},
    \]
    and
    \[
        \bigg(\sum_{j\in I_N}G_{m,j}^q\bigg)^{\frac1q}
        = \bigg(\sum_{k\in I_m}\big|g_{m+1,R}(k)\big|^q\bigg)^{\frac1q},
    \]
    Lemma~\ref{lem:child-estimate} yields
    \[
        \bigg(\sum_{j\in I_N}|\Delta_{m,j}|^q\bigg)^{\frac1q}
        \le \frac14\mathcal A_f^{(p)}(m) \, \mathcal A_g^{(q)}(m).
    \]
    Summing over $m\ge N$ gives the claimed estimate \eqref{level-wise-estimate}.

    Since
    \[
        2^{-\frac n2} \theta_{n,k}^h = \big(h(t_{n+1,2k+1})-h(t_{n,k})\big)-\big(h(t_{n,k+1})-h(t_{n+1,2k+1})\big),
    \]
    we obtain
    \begin{align*}
        2^{-\frac n2}\theta_{n,k}^h = f(t_{n,k})\,2^{-\frac n2}\theta_{n,k}^g &+ \big(f(t_{n,k})-f(t_{n+1,2k+1})\big) \big(g(t_{n,k+1})-g(t_{n+1,2k+1})\big) 
        \\ & + R_{n+1,2k}-R_{n+1,2k+1}.
    \end{align*}
    Taking the $\ell^q$ norm over $k\in I_n$ and using Lemma~\ref{lem:child-estimate} with the estimate \eqref{level-wise-estimate}, we get
    \[
        \zetaen{h}{q}(n)\le \Vert f\Vert_\infty\zetaen{g}{q}(n)+\frac14 \mathcal A_f^{(p)}(n) \, \mathcal A_g^{(q)}(n) +\frac12 P_{f,g}(n+1).
    \]
    Since $\zetaen{g}{q}\in\ell^q(\N_{-1})$, $\mathcal A_f^{(p)} \mathcal A_g^{(q)}\in\ell^q(\N_0)$, and $P_{f,g}\in\ell^q(\N_0)$, it follows that $\zetaen{h}{q}\in\ell^q(\N_{-1})$.

    It remains to estimate the Schauder tail of $h$. From the same coefficient identity and the pointwise remainder estimate in Lemma~\ref{lem:refinement-remainders}, we have
    \[
        2^{-\frac n2} \sup_{k\in I_n} \big|\theta_{n,k}^h\big|
        \le \Vert f\Vert_\infty\,2^{-\frac n2}\sup_{k\in I_n}\big|\theta_{n,k}^g\big|
        + \frac14 \mathcal A_f^{(p)}(n) \, \mathcal A_g^{(q)}(n) + \frac12 P_{f,g}(n+1).
    \]
    Therefore, for every $N\ge0$,
    \[
        H_h(N)\le \Vert f\Vert_\infty H_g(N)+\frac14 P_{f,g}(N)+\frac12\sum_{n=N}^{\infty}P_{f,g}(n+1).
    \]
    The first term belongs to $\ell^q(\N_0)$ because $g\in\EE^q$, and the last two terms belong to $\ell^q(\N_0)$ by the estimate on $P_{f,g}$ above. Hence, $H_h\in\ell^q(\N_0)$.
    Since $h$ is continuous and $h(1)-h(0)$ is finite, the affine level is finite. Therefore $h=I_{f,g}\in\EE^q$.
\end{proof}

The additional assumption in Theorem~\ref{thm:integral-energy-regularity} is not needed for the existence of the Faber--Schauder integral. Rather, it is used to ensure that the integral belongs to the same energy space as the integrator. This kind of regularity is useful if one wants to use a Faber--Schauder integral itself as an integrator, for instance in the construction of iterated integrals or in a differential equation theory driven by paths in Faber--Schauder energy spaces.

The assumption is satisfied in many natural examples where the integrand has sufficiently fast Faber--Schauder energy decay, as illustrated in the next section. There we give deterministic and stochastic examples showing the scope of the theory, including cases where the classical H\"older--Young or finite-variation Young conditions cannot be verified.

\bigskip

\section{Examples and comparison with Young theory} \label{sec:examples}

In this section, we illustrate the scope of the Faber--Schauder integral theory through deterministic and stochastic examples, including examples beyond the classical H\"older--Young and finite-variation Young regimes.

\medskip

\subsection{Deterministic examples}   \label{subsec:deterministic-examples}

We first show that the classical H\"older--Young regime is contained in the energy-space framework developed in the previous section. For $0<\alpha\le1$, write
\begin{equation}    \label{def:Holder-norm}
    [f]_{\alpha}:=\sup_{0 \le s\ne t \le 1}\frac{|f(t)-f(s)|}{|t-s|^{\alpha}}, \qquad \|f\|_{C^\alpha}:=\|f\|_\infty+[f]_\alpha,
\end{equation}
and let $C^\alpha([0,1])$ denote the space of $\alpha$-H\"older continuous functions on $[0,1]$.

\begin{proposition}[H\"older functions belong to energy spaces]
\label{prop:holder-embedding-energy}
    Let $p>1$ and $0<\alpha\le1$. If $\alpha p > 1$, then there exists a constant $C_{\alpha,p}>0$ such that
    \[
        \|f\|_{\EE^p}\le C_{\alpha,p}\|f\|_{C^\alpha}, \qquad f\in C^\alpha([0,1]).
    \]
    In particular, $C^\alpha([0,1])\subset \EE^p$.
\end{proposition}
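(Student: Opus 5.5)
The plan is to bound each of the three pieces of $\|f\|_{\EE^p}$ separately by a multiple of $\|f\|_{C^\alpha}$. Everything rests on one elementary coefficient estimate, obtained from the second-order increment formula \eqref{eq:FS-coeff}:
\[
    \theta^f_{n,k}=2^{n/2}\Big(\big(f(t_{n+1,2k+1})-f(t_{n,k})\big)-\big(f(t_{n,k+1})-f(t_{n+1,2k+1})\big)\Big).
\]
Each increment is over an interval of length $2^{-(n+1)}$, so
\[
    |\theta^f_{n,k}|\le 2\cdot 2^{n/2}2^{-(n+1)\alpha}[f]_\alpha\le 2^{1-\alpha}\,2^{n(\frac12-\alpha)}[f]_\alpha,
\]
uniformly in $k\in I_n$. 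This is the easy half of Ciesielski's characterization. The uniform-norm part is trivial, and the affine level is handled by $\zetaen{f}{p}(-1)=2^{1-\frac1p}|f(1)-f(0)|\le 2\|f\|_\infty$.

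For the level energies with $n\ge0$, insert the coefficient bound into \eqref{def:zetaen_f}:
\[
    \big(\zetaen{f}{p}(n)\big)^p=2^{-\frac{np}{2}}\sum_{k\in I_n}|\theta^f_{n,k}|^p\le C\,2^{-\frac{np}2}\,2^n\,2^{np(\frac12-\alpha)}[f]_\alpha^p=C\,2^{-n(\alpha p-1)}[f]_\alpha^p.
\]
Because $\alpha p>1$, this sequence is geometrically summable in $n$, so
\[
    \|\zetaen{f}{p}\|_{\ell^p(\N_{-1})}\le C_{\alpha,p}\big(\|f\|_\infty+[f]_\alpha\big).
\]
This is the only place where the hypothesis $\alpha p>1$ is genuinely needed, and it shows why the threshold is exactly $\alpha p>1$.

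For the Schauder tail, the same bound gives
\[
    2^{-\frac m2}\sup_{k\in I_m}|\theta^f_{m,k}|\le 2^{1-\alpha}2^{-m\alpha}[f]_\alpha,
\]
so
\[
    H_f(n)\le \frac{2^{1-\alpha}}{1-2^{-\alpha}}\,2^{-n\alpha}[f]_\alpha .
\]
This is geometric in $n$ for every $\alpha>0$, hence $\|H_f\|_{\ell^p(\N_0)}\le C_{\alpha,p}[f]_\alpha$, with a constant depending only on $\alpha$ and $p$. In the case $\alpha=1$, the bounds $|\theta^f_{n,k}|\lesssim 2^{-n/2}[f]_1$ still hold, and nothing changes.

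Summing the three bounds gives $\|f\|_{\EE^p}\le C_{\alpha,p}\|f\|_{C^\alpha}$. No real obstacle is expected. The only point needing care is to track the powers of $2$ in the level-energy computation, so as to confirm that the exponent is exactly $-(\alpha p-1)$ and hence summable under the stated hypothesis.
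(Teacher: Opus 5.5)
Your proof is correct and follows the paper's argument essentially step for step: the same coefficient bound $|\theta^f_{n,k}|\le 2^{1-\alpha}2^{n(\frac12-\alpha)}[f]_\alpha$, geometric summability of the level energies under $\alpha p>1$, and the same geometric bound on $H_f$. One harmless constant slip is that $2^{1-\frac1p}|f(1)-f(0)|\le 2^{2-\frac1p}\|f\|_\infty$, not $2\|f\|_\infty$; the paper instead uses $2^{1-\frac1p}|f(1)-f(0)|\le 2^{1-\frac1p}[f]_\alpha$, and either choice only changes $C_{\alpha,p}$.
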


\begin{proof}
    Let $f\in C^\alpha([0,1])$. For $n\ge0$ and $k\in I_n$, the coefficient formula \eqref{eq:FS-coeff} gives
    \begin{align}
        \big|\theta_{n,k}^f\big|
        &= 2^{\frac n2}\big|2f(t_{n+1,2k+1})-f(t_{n,k})-f(t_{n,k+1})\big|   \label{ineq:FS-coefficient-bound-Holder}
        \\
        &\le 2^{\frac n2} \Big( \big|f(t_{n+1,2k+1})-f(t_{n,k})\big| + \big|f(t_{n+1,2k+1})-f(t_{n,k+1})\big| \Big) \le 2^{1-\alpha}[f]_\alpha 2^{n(\frac12-\alpha)}.   \nonumber
    \end{align}
    Therefore, for $n\ge0$,
    \[
        \zetaen{f}{p}(n) = \bigg(2^{-\frac{np}{2}}\sum_{k\in I_n}\big|\theta_{n,k}^f\big|^p\bigg)^{\frac1p}
        \le 2^{1-\alpha}[f]_\alpha 2^{-n(\alpha-\frac1p)}.
    \]
    Since $\alpha>1/p$, this gives $\|\zetaen{f}{p}\|_{\ell^p(\N_0)}<\infty$. The affine level is also finite, since
    \[
        \zetaen{f}{p}(-1)=2^{1-\frac1p}\big|f(1)-f(0)\big|\le 2^{1-\frac1p}[f]_\alpha.
    \]
    Hence, $\zetaen{f}{p}\in\ell^p(\N_{-1})$.

    It remains to estimate the Schauder tail. From the same coefficient bound,
    \[
        2^{-\frac n2}\sup_{k\in I_n}\big|\theta_{n,k}^f\big| \le 2^{1-\alpha}[f]_\alpha 2^{-\alpha n}.
    \]
    Thus
    \[
        H_f(n) = \sum_{m=n}^{\infty}2^{-\frac m2}\sup_{k\in I_m}\big|\theta_{m,k}^f\big|
        \le 2^{1-\alpha}[f]_\alpha\sum_{m=n}^{\infty}2^{-\alpha m} = \frac{2^{1-\alpha}}{1-2^{-\alpha}}[f]_\alpha 2^{-\alpha n}.
    \]
    Hence $H_f\in\ell^p(\N_0)$. Combining these estimates with Definition~\ref{def:energy-spaces} proves the claimed bound.
\end{proof}

We now verify that, in the classical H\"older--Young regime, the Faber--Schauder integral constructed in this paper agrees with the usual Young integral. We write
\[
    \int_0^t f\,d^Yg
\]
for the classical Young integral.

\begin{theorem}[H\"older--Young regime]
\label{thm:holder-young-regime}
    Let $0<\alpha,\beta\le1$ satisfy $\alpha+\beta>1$. If $f\in C^\alpha([0,1])$ and $g\in C^\beta([0,1])$, then there exist conjugate exponents $p,q>1$ such that
    \[
        f\in\EE^p, \qquad g\in\EE^q.
    \]
    Consequently, the Faber--Schauder integrals $I_{f,g}$ and $I_{g,f}$ exist on $[0,1]$ and agree with the classical Young integrals:
    \[
        I_{f,g}(t)=\int_0^t f\,d^Yg, \qquad I_{g,f}(t)=\int_0^t g\,d^Yf, \qquad t\in[0,1].
    \]
    Moreover, the integration-by-parts identity of Theorem~\ref{thm:extended-integration-by-parts} holds, and the dyadic Young--Lo\`eve estimate of Theorem~\ref{thm:dyadic-young-loeve} applies.
\end{theorem}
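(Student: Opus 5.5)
First, choose the exponents. Since $\alpha+\beta>1$, we may pick $p,q>1$ with $1/p<\alpha$, $1/q<\beta$ and $1/p+1/q=1$. Set $1/p=\alpha-\delta$ and $1/q=1-\alpha+\delta$ with $\delta>0$ small. Then $\alpha p>1$ holds as soon as $\delta>0$. The condition $\beta q>1$ means $1-\alpha+\delta<\beta$, i.e. $\delta<\alpha+\beta-1$. We also need $1/p\in(0,1)$, i.e. $\delta<\alpha$ (and $\alpha-\delta<1$ is automatic unless $\alpha=1$, where $\delta>0$ suffices). So any $0<\delta<\min\{\alpha,\alpha+\beta-1\}$ works. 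Proposition~\ref{prop:holder-embedding-energy} then gives $f\in\EE^p$ and $g\in\EE^q$. Theorem~\ref{thm:uniform-continuity} (or Corollary~\ref{cor:energy-space-full-theory}) yields existence of $I_{f,g}$ and $I_{g,f}$. The integration-by-parts identity and the Young--Lo\`eve estimate then follow directly from Theorem~\ref{thm:extended-integration-by-parts} and Theorem~\ref{thm:dyadic-young-loeve}.

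Second, we identify the integral with the Young integral. The classical Young theory in the H\"older regime says that Riemann--Stieltjes sums $\sum f(t_i)(g(t_{i+1})-g(t_i))$ over partitions of $[0,t]$ converge to $\int_0^t f\,d^Yg$ as the mesh tends to zero. For $t\in\T$ and $n\ge\ell(t)$, the dyadic left sum $S_n^L(t;f,g)$ is exactly such a sum with mesh $2^{-n}$, so $I^{\T}_{f,g}(t)=\int_0^t f\,d^Yg$ for all dyadic $t$. Both $t\mapsto I_{f,g}(t)$ and $t\mapsto\int_0^t f\,d^Yg$ are continuous; the latter is in fact $\beta$-H\"older by the classical Young--Lo\`eve estimate. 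Density of $\T$ therefore gives equality on $[0,1]$. The same argument with the roles of $f$ and $g$ exchanged handles $I_{g,f}$. Alternatively, one can use integration by parts on both sides.

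The main point to check is the mesh-convergence statement for Young integrals along the dyadic partitions. If one prefers not to quote it, it can be derived from the sewing bound $|\int_s^t f\,d^Yg-f(s)(g(t)-g(s))|\le C[f]_\alpha[g]_\beta|t-s|^{\alpha+\beta}$. Summing this over the $2^n t$ level-$n$ intervals gives an error of at most $C2^{n(1-\alpha-\beta)}\to0$. Everything else is bookkeeping with earlier results.
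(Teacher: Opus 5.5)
Your proposal is correct and follows essentially the same route as the paper. Your choice $1/p=\alpha-\delta$ with $0<\delta<\alpha+\beta-1$ is just a parametrization of the paper's interval $1-\beta<1/p<\alpha$; from there both arguments invoke the H\"older embedding into $\EE^p$, the energy-space criterion for existence, integration by parts and the dyadic Young--Lo\`eve estimate, and then identify the integrals by viewing the dyadic left sums at $t\in\T$ as Riemann--Stieltjes sums converging to the Young integral and extending by continuity and density (your optional sewing-bound derivation of that mesh convergence is a harmless extra detail the paper simply quotes).
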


\begin{proof}
    Since $\alpha+\beta>1$, we may choose $p>1$ such that
    \[
        1-\beta<\frac1p<\alpha.
    \]
    Let $q$ be the conjugate exponent of $p$. Then
    \[
        \frac1q=1-\frac1p<\beta.
    \]
    Hence $\alpha p > 1$ and $\beta q > 1$. Proposition~\ref{prop:holder-embedding-energy} gives $f\in\EE^p$, $g\in\EE^q$, and Corollary~\ref{cor:energy-space-full-theory} proves the existence of Faber--Schauder integrals $I_{f,g}$ and $I_{g, f}$ on $[0,1]$, along with the integration-by-parts identity and the dyadic Young--Lo\`eve estimate.

    To show that the Faber--Schauder integrals agree with the Young integrals, let $t\in\T$. For every $n\ge\ell(t)$, the sum $S_n^L(t;f,g)$ is a left Riemann sum over the dyadic partition of $[0,t]$, with mesh size $2^{-n}$. Since $\alpha+\beta>1$, the classical Young integral exists and these Riemann sums converge to it as $n\to\infty$. Therefore
    \[
        I_{f,g}^{\T}(t)=\int_0^t f\,d^Yg, \qquad t\in\T.
    \]
    Both sides are continuous functions of $t$ on $[0,1]$: the left-hand side by construction of $I_{f,g}$, and the right-hand side by the classical Young theory. Since $\T$ is dense in $[0,1]$, the identity extends to all $t\in[0,1]$. The identity for $I_{g,f}$ follows by the same argument, with the roles of $f$ and $g$ interchanged.
\end{proof}

\begin{corollary}[Compatibility with the variation Young integral] \label{cor:variation-young-compatibility}
    Let $p,q>1$ satisfy
    \[
        \frac1p+\frac1q\ge1.
    \]
    Let $f\in\EE^p$ and $g\in\EE^q$. Suppose, in addition, that there exist $r,s\ge1$ such that $f$ has finite $r$-variation, $g$ has finite $s$-variation, and
    \[
        \frac1r+\frac1s>1.
    \]
    Then the classical Young integrals $\int_0^t f\,d^Yg$ and $\int_0^t g\,d^Yf$ exist, and
    \[
        I_{f,g}(t)=\int_0^t f\,d^Yg, \qquad I_{g,f}(t)=\int_0^t g\,d^Yf, \qquad t\in[0,1].
    \]
\end{corollary}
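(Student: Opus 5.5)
The argument mirrors the proof of Theorem~\ref{thm:holder-young-regime}: both integrals are identified first on the dyadic set $\T$ and then extended by continuity. Existence of the Faber--Schauder integrals $I_{f,g}$ and $I_{g,f}$ on $[0,1]$ is already supplied by Corollary~\ref{cor:energy-space-full-theory}. This uses only $f\in\EE^p$, $g\in\EE^q$ and $\frac1p+\frac1q\ge1$, and each integral is a continuous function of $t$. On the other side, the classical Young theory with $f$ of finite $r$-variation, $g$ of finite $s$-variation and $\frac1r+\frac1s>1$ gives the existence of $\int_0^t f\,d^Yg$ and $\int_0^t g\,d^Yf$ for every $t\in[0,1]$. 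I would invoke the Young--Lo\`eve theory, for instance \cite[Theorem~6.8]{FrizVictoir2010}. These Young integrals are continuous in $t$ as well, since $t\mapsto\int_0^t f\,d^Yg$ has finite $s$-variation and inherits continuity from $g$ through the Young--Lo\`eve bound on small intervals.

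Next fix $t\in\T$. For $n\ge\ell(t)$, the dyadic left sum $S_n^L(t;f,g)$ is a Riemann--Stieltjes sum with left-point evaluation over the dyadic partition of $[0,t]$, whose mesh is $2^{-n}\to0$. In the variation Young regime with continuous paths, Riemann sums converge to the Young integral as the mesh tends to zero, not merely along refinements. This follows from the Young--Lo\`eve estimate
\[
    \Big|\int_u^v f\,d^Yg-f(u)\big(g(v)-g(u)\big)\Big|\le C\,\|f\|_{r\text{-var},[u,v]}\|g\|_{s\text{-var},[u,v]},
\]
together with superadditivity of the controls $\omega_f(u,v)=\|f\|^r_{r\text{-var},[u,v]}$ and $\omega_g(u,v)=\|g\|^s_{s\text{-var},[u,v]}$. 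Since $\theta:=\frac1r+\frac1s>1$, summing this estimate over the intervals of the partition bounds the total error by a constant times
\[
    \max_k\big(\omega_f(t_{n,k},t_{n,k+1})+\omega_g(t_{n,k},t_{n,k+1})\big)^{\theta-1}\big(\omega_f(0,t)+\omega_g(0,t)\big).
\]
Continuity of the controls makes the maximum tend to zero. Hence $S_n^L(t;f,g)\to\int_0^t f\,d^Yg$, and since the same sums converge to $I_{f,g}^{\T}(t)$ by construction, the two integrals agree on $\T$.

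Both $I_{f,g}$ and $t\mapsto\int_0^t f\,d^Yg$ are continuous on $[0,1]$ and coincide on the dense set $\T$, so they coincide on $[0,1]$. The identity $I_{g,f}=\int_0^\cdot g\,d^Yf$ follows by the same argument with the roles of $f$ and $g$ interchanged. Alternatively, one can subtract the two integration-by-parts identities: Theorem~\ref{thm:extended-integration-by-parts} for the Faber--Schauder integrals and the classical one for Young integrals.

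The main obstacle is only the bookkeeping point that we need convergence of the \emph{specific} dyadic left sums to the Young integral. This requires the mesh-to-zero form of Young's theorem, rather than refinement-limit convergence. The only subtlety there is continuity of the $r$- and $s$-variation controls for continuous paths of finite variation, which is standard. No new coefficient estimates from the paper are required.
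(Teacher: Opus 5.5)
Your proposal is correct and follows the paper's proof. Existence of $I_{f,g},I_{g,f}$ comes from Corollary~\ref{cor:energy-space-full-theory}, and existence of the Young integrals from classical Young theory. The two are identified on $\T$ because the dyadic left sums are Riemann--Stieltjes sums with mesh $2^{-n}\to0$, and the identity extends to $[0,1]$ by continuity and density. Your derivation of mesh-to-zero convergence from the Young--Lo\`eve estimate and superadditive, continuous controls spells out the step the paper attributes to ``Young's theorem''; it does not change the route.
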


\begin{proof}
    The existence and continuity of the Faber--Schauder integrals follow from Corollary~\ref{cor:energy-space-full-theory}, while the existence and continuity of the Young integrals follow from the classical finite-variation Young theorem. For each $t\in\T$, the dyadic left sums $S_n^L(t;f,g)$ are Riemann--Stieltjes sums with mesh tending to zero; hence they converge both to $I_{f,g}^{\T}(t)$ by definition and to $\int_0^t f\,d^Yg$ by Young's theorem. Therefore the two integrals agree on $\T$, and by continuity on all of $[0,1]$. The argument for $I_{g,f}$ is the same.
\end{proof}

Theorem~\ref{thm:holder-young-regime} shows that the Faber--Schauder construction is consistent with the classical Young integral in the H\"older--Young regime, where the energy-space conditions follow from H\"older regularity. Corollary~\ref{cor:variation-young-compatibility} gives the corresponding compatibility statement on the overlap with the finite-variation Young regime. We note, however, that the energy-space assumptions are not simply finite variation assumptions in another form: they impose summability across dyadic Faber--Schauder levels. This is what allows the Faber--Schauder integral to exist also in examples where neither the H\"older--Young nor the finite-variation Young criterion applies.

We next give deterministic examples showing that the coefficient-side condition $f\in\EE^p$, $g\in\EE^q$ can hold even when the classical H\"older--Young condition cannot be verified. The key point is that the level-energy component of the $\EE^p$ norm is sensitive to how the Faber--Schauder coefficients are distributed across each level, whereas H\"older regularity is governed by the largest coefficient size on each level.

\begin{proposition}[Level-sparse Faber--Schauder series]    \label{prop:level-sparse-fs-membership}
    Let $p>1$. For each $n\ge0$, let $\Lambda_n\subset I_n$ be a set of active indices, and let $(a_{n,k})_{k\in\Lambda_n}$ be real coefficients. Define
    \[
        A_n:=\sup_{k\in\Lambda_n}|a_{n,k}|, \qquad M_n:=\#\Lambda_n,
    \]
    where $\#\Lambda_n$ denotes the cardinality of $\Lambda_n$, with the convention that $A_n=0$ if $\Lambda_n=\emptyset$. Suppose that there exist constants $C_A,C_M>0$, $\alpha>0$, and $\eta\in[0,1]$ such that $p\alpha>\eta$ and
    \[
        A_n\le C_A \, 2^{n(\frac12-\alpha)}, \qquad M_n\le C_M \, 2^{\eta n}, \qquad n\ge0.
    \]
    Then, for every $c,b\in\R$, the function
    \[
        f(t):=c+bt+\sum_{n=0}^{\infty} \sum_{k\in\Lambda_n}a_{n,k}e_{n,k}(t), \qquad t\in[0,1],
    \]
    is well-defined and belongs to $\EE^p$.
\end{proposition}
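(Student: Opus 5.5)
The plan is to use the coefficient-space representation, Theorem~\ref{thm:coefficient-space-isomorphism}. It suffices to show that the coefficient family $a=(a_{n,k})$, extended by $a_{-1,0}:=b$ and by $a_{n,k}:=0$ for $k\notin\Lambda_n$, satisfies $\zeta_a^{(p)}\in\ell^p(\N_{-1})$ and $H_a\in\ell^p(\N_0)$. Then $(c,a)\in\mathfrak e^p$, and the reconstruction map gives $f=\mathscr R_p(c,a)\in\EE^p$. Note that $bt=b\,e_{-1,0}(t)$, so $f$ is exactly this reconstruction. Well-definedness, meaning uniform convergence of the series, is contained in the proof of that theorem once $H_a(0)<\infty$ is known. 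Alternatively, one sees it directly from $\|e_{n,k}\|_\infty\le \frac12 2^{-n/2}$, the fact that at most one tent per level is nonzero at any point, and the summability of $2^{-n/2}A_n$.

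\textbf{Level energies.} For $n\ge0$, only the indices in $\Lambda_n$ contribute, so
\[
\big(\zeta_a^{(p)}(n)\big)^p\le 2^{-\frac{np}{2}}M_nA_n^p\le C_MC_A^p\,2^{-\frac{np}{2}}2^{\eta n}2^{np(\frac12-\alpha)}=C_MC_A^p\,2^{-n(p\alpha-\eta)}.
\]
Since $p\alpha>\eta$, this is a summable geometric sequence. The affine level $\zeta_a^{(p)}(-1)=2^{1-\frac1p}|b|$ is finite, so $\zeta_a^{(p)}\in\ell^p(\N_{-1})$.

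\textbf{Schauder tail.} We have $2^{-\frac m2}\sup_k|a_{m,k}|=2^{-\frac m2}A_m\le C_A2^{-\alpha m}$. Hence
\[
H_a(n)\le C_A\sum_{m\ge n}2^{-\alpha m}=\frac{C_A}{1-2^{-\alpha}}\,2^{-\alpha n},
\]
which lies in $\ell^p(\N_0)$ because $\alpha>0$. Note that the tail condition uses only $\alpha>0$, while the sparsity exponent $\eta$ enters only through the level energies.

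There is no serious obstacle. The only point needing care is identifying the coefficients of $f$: by uniqueness of the Faber--Schauder expansion, the coefficients of the uniformly convergent series are $\theta^f_{n,k}=a_{n,k}$ on $\Lambda_n$ and $0$ elsewhere, and $f(0)=c$. This identification is exactly what the reconstruction part of Theorem~\ref{thm:coefficient-space-isomorphism} provides.
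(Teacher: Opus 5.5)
Your proof is correct and is essentially the paper's argument: you use the same level-energy bound $2^{-\frac{np}{2}}M_nA_n^p\le C_MC_A^p\,2^{-n(p\alpha-\eta)}$ and the same geometric Schauder-tail bound $H(n)\le \frac{C_A}{1-2^{-\alpha}}2^{-\alpha n}$. The only difference is packaging: you obtain uniform convergence and the identification $\theta^f_{n,k}=a_{n,k}$ from the reconstruction part of Theorem~\ref{thm:coefficient-space-isomorphism}, which is legitimately available since it precedes this proposition, whereas the paper checks these two points directly in the proof.
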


\begin{proof}
    We first check uniform convergence. At each point $t\in[0,1]$, at most one Faber--Schauder function at level $n$ is nonzero, and its height is at most $2^{-\frac n2-1}$. Hence
    \[
        \bigg\|\sum_{k\in\Lambda_n}a_{n,k}e_{n,k}\bigg\|_{\infty}
        \le \frac12\,2^{-\frac n2}A_n \le \frac{C_A}{2}2^{-\alpha n}.
    \]
    Since $\alpha>0$, the series converges uniformly on $[0,1]$, and therefore $f\in\Cn$.

    By construction, the affine coefficient is $\theta_{-1,0}^f=b$, and for $n\ge0$
    \[
        \theta_{n,k}^f=
        \begin{cases}
            a_{n,k}, & k\in\Lambda_n,\\
            0, & k\notin\Lambda_n.
        \end{cases}
    \]
    The affine level contributes only a finite quantity
    \[
        \zetaen{f}{p}(-1)=2^{1-\frac1p}|b|.
    \]
    For $n\ge0$, we have
    \[
        \big(\zetaen{f}{p}(n)\big)^p
        = 2^{-\frac{np}{2}}\sum_{k\in I_n}\big|\theta_{n,k}^f\big|^p
        = 2^{-\frac{np}{2}}\sum_{k\in\Lambda_n}|a_{n,k}|^p
        \le 2^{-\frac{np}{2}}M_nA_n^p.
    \]
    Using the assumptions on $A_n$ and $M_n$, we obtain
    \[
        \big(\zetaen{f}{p}(n)\big)^p \le C_MC_A^p\,2^{(\eta-p\alpha)n}.
    \]
    Since $p\alpha>\eta$, this implies $\zetaen{f}{p} \in \ell^p(\N_{-1})$.

    It remains to check the Schauder tail condition. For $n\ge0$,
    \[
        H_f(n) = \sum_{m=n}^{\infty}2^{-\frac m2}\sup_{k\in I_m}\big|\theta_{m,k}^f\big|
        \le \sum_{m=n}^{\infty}2^{-\frac m2}A_m
        \le C_A\sum_{m=n}^{\infty}2^{-\alpha m}
        = \frac{C_A}{1-2^{-\alpha}}2^{-\alpha n}.
    \]
    Hence $H_f\in\ell^p(\N_0)$. Therefore $f\in\EE^p$.
\end{proof}

The preceding proposition gives many paths in $\EE^p$ whose roughness is distributed over a controlled number of Faber--Schauder coefficients at each level. Using these paths, we can construct examples beyond the scope of the H\"older--Young regime, while still remaining within the Faber--Schauder integral theory.

\begin{example}[A Faber--Schauder class beyond the H\"older--Young regime]
\label{ex:sparse-beyond-young}
    Let $0<\alpha,\beta<1$ satisfy $\alpha+\beta\le1$. Choose conjugate exponents $p,q>1$ and parameters $\eta_f,\eta_g\in[0,1]$ such that
    \[
        p\alpha>\eta_f, \qquad q\beta>\eta_g.
    \]
    For each $n\ge0$, let $\Lambda_n^f,\Lambda_n^g\subset I_n$ be nonempty active sets with cardinalities satisfying
    \[
        \#\Lambda_n^f\le C_f2^{\eta_f n}, \qquad \#\Lambda_n^g\le C_g2^{\eta_g n}
    \]
    for some constants $C_f,C_g>0$. Choose signs $\sigma^f_{n,k},\sigma^g_{n,k}\in\{-1,1\}$ and define
    \[
        f(t):=\sum_{n=0}^{\infty}\sum_{k\in\Lambda_n^f}\sigma^f_{n,k}2^{n(\frac12-\alpha)}e_{n,k}(t), \quad g(t):=\sum_{n=0}^{\infty}\sum_{k\in\Lambda_n^g}\sigma^g_{n,k}2^{n(\frac12-\beta)}e_{n,k}(t), \quad t \in [0, 1].
    \]
    Then, $f\in\EE^p$ and $g\in\EE^q$. Consequently, the Faber--Schauder integrals $I_{f,g}$ and $I_{g,f}$ exist on $[0,1]$; moreover, integration by parts and the dyadic Young--Lo\`eve estimate hold.

    On the other hand, $f$ is not $\gamma$-H\"older continuous for any $\gamma>\alpha$, and $g$ is not $\delta$-H\"older continuous for any $\delta>\beta$. Hence, there are no H\"older exponents for this pair whose sum is strictly larger than one.
\end{example}

\begin{proof}
    We first prove that $f\in\EE^p$. For each $n\ge0$, the active coefficients of $f$ satisfy
    \[
        \sup_{k\in\Lambda_n^f}\big|\sigma^f_{n,k}2^{n(\frac12-\alpha)}\big|=2^{n(\frac12-\alpha)}, \qquad \#\Lambda_n^f\le C_f2^{\eta_f n}.
    \]
    Proposition~\ref{prop:level-sparse-fs-membership}, applied with $A_n=2^{n(\frac12-\alpha)}$ and $M_n=\#\Lambda_n^f$, gives $f\in\EE^p$, since $p\alpha>\eta_f$. The same argument, with $\beta$, $\eta_g$, $q$, and $\Lambda_n^g$ in place of $\alpha$, $\eta_f$, $p$, and $\Lambda_n^f$, gives $g\in\EE^q$. Therefore, the conclusions about the Faber--Schauder integrals follow from Corollary~\ref{cor:energy-space-full-theory}.

    We next show that $f$ has no H\"older regularity exponent strictly larger than $\alpha$. Suppose, for contradiction, that $f\in C^\gamma([0,1])$ for some $\gamma>\alpha$. Then, the coefficient estimate \eqref{ineq:FS-coefficient-bound-Holder} in the proof of Proposition~\ref{prop:holder-embedding-energy} gives
    \[
        \big|\theta^f_{n,k}\big|\le C_\gamma2^{n(\frac12-\gamma)}, \qquad n\ge0, \quad k\in I_n,
    \]
    for some constant $C_\gamma>0$. Since $\Lambda_n^f$ is nonempty for every $n$, choose $k_n\in\Lambda_n^f$. Then
    \[
        2^{n(\frac12-\alpha)}=\big|\theta^f_{n,k_n}\big|\le C_\gamma2^{n(\frac12-\gamma)}.
    \]
    Equivalently, $2^{n(\gamma-\alpha)}\le C_\gamma$ for all $n\ge0$, which is impossible because $\gamma>\alpha$. Hence, $f$ is not $\gamma$-H\"older continuous for any $\gamma>\alpha$. The proof for $g$ is identical.

    If the classical H\"older--Young condition were verifiable for this pair $(f, g)$, then there would exist exponents $\gamma,\delta>0$ such that
    \[
        f\in C^\gamma([0,1]), \qquad g\in C^\delta([0,1]), \qquad \gamma+\delta>1.
    \]
    The preceding paragraph forces $\gamma\le\alpha$ and $\delta\le\beta$, hence $\gamma+\delta\le\alpha+\beta\le1$, a contradiction.
\end{proof}

This example shows that the dyadic coefficient condition is not merely a reformulation of the H\"older--Young condition; it can apply to pairs for which no H\"older exponents satisfying the classical Young threshold are available. The trade-off
\[
    p\alpha>\eta_f, \qquad q\beta>\eta_g
\]
shows that the level energies can remain summable even when the number of active Faber--Schauder coefficients grows exponentially with the level, provided that this growth is sufficiently slow relative to the coefficient decay.

We now construct a pair of paths for which the classical Young theorem based on the finite-variation condition cannot be applied, while the Faber--Schauder integral theory developed here still applies. In contrast to the level-sparse example in the previous subsection, the following construction activates all Faber--Schauder coefficients at each level. We use the coefficient characterization \eqref{con:equiv-zetaen-p-var} of infinite $p$-variation.

\begin{example}[A Faber--Schauder pair beyond the variation Young regime]   \label{ex:beyond-variation-young}
    Let $p,q>1$ be conjugates. Define
    \[
        a_n:=2^{-\frac np}(n+1)^{-\frac2p}, \qquad b_n:=2^{-\frac nq}(n+1)^{-\frac2q}, \qquad n\ge0,
    \]
    and set
    \[
        f(t):=\sum_{n=0}^{\infty}\sum_{k\in I_n}2^{\frac n2}a_n e_{n,k}(t), \qquad
        g(t):=\sum_{n=0}^{\infty}\sum_{k\in I_n}2^{\frac n2}b_n e_{n,k}(t).
    \]
    Then $f\in\EE^p$ and $g\in\EE^q$. Consequently, the Faber--Schauder integrals $I_{f,g}$ and $I_{g,f}$ exist on $[0,1]$; moreover, the integration-by-parts identity and the dyadic Young--Lo\`eve estimate hold.

    On the other hand, $f$ has infinite $r$-variation for every $1\le r<p$, and $g$ has infinite $s$-variation for every $1\le s<q$. Hence, there are no finite-variation exponents $r,s\ge1$ for this pair satisfying
    \[
        \frac1r+\frac1s>1.
    \]
    Thus, the finite-variation Young condition cannot be satisfied by this pair $(f, g)$.
\end{example}

\begin{proof}
    The uniform convergence of the two series and the energy-space membership $f\in\EE^p$, $g\in\EE^q$ follow from the same computation as in the proof of Proposition~\ref{prop:energy-space-monotonicity}. The conclusions about the Faber--Schauder integrals follow from Corollary~\ref{cor:energy-space-full-theory}.

    We now prove that the finite-variation Young condition cannot be satisfied by this pair. Let $1<r<p$. Then
    \[
        \big(\zetaen{f}{r}(n)\big)^r
        = 2^{-\frac{nr}{2}}\sum_{k\in I_n}\big|\theta^f_{n,k}\big|^r
        = 2^n a_n^r
        = 2^{n(1-\frac rp)}(n+1)^{-\frac{2r}{p}} \xrightarrow{n\to\infty}\infty.
    \]
    Applying the coefficient characterization \eqref{con:equiv-zetaen-p-var} with exponent $r$, we get $\|f\|_{r\text{-var}}=\infty$. If $f$ had finite $1$-variation, then, since $f$ is bounded, it would have finite $r$-variation for every $r>1$, contradicting the preceding conclusion for any $1<r<p$. Thus, $f$ has infinite $r$-variation for every $1\le r<p$. Similarly, $g$ has infinite $s$-variation for every $1\le s<q$.

    Suppose now that the finite-variation Young's theorem were applicable to this pair. Then there would exist $r,s\ge1$ such that $f$ has finite $r$-variation, $g$ has finite $s$-variation, and
    \[
        \frac1r+\frac1s>1.
    \]
    The variation divergence proved above forces $r\ge p$ and $s\ge q$. Therefore
    \[
        \frac1r+\frac1s\le \frac1p+\frac1q=1,
    \]
    a contradiction, and no such variation exponents exist.
\end{proof}

The particular choice of the polynomial weights $(n+1)^{-\frac2p}$ and $(n+1)^{-\frac2q}$ in Example~\ref{ex:beyond-variation-young} is not essential. More generally, for fixed conjugate exponents $p,q>1$, the same argument applies to any positive sequences $(a_n)_{n\ge0}$ and $(b_n)_{n\ge0}$ satisfying
\[
    \sum_{n=0}^{\infty}2^n a_n^p<\infty, \quad \sum_{n=0}^{\infty}\Big(\sum_{m=n}^{\infty}a_m\Big)^p<\infty, \quad \sum_{n=0}^{\infty}2^n b_n^q<\infty, \quad \sum_{n=0}^{\infty}\Big(\sum_{m=n}^{\infty}b_m\Big)^q<\infty,
\]
and
\[
    \limsup_{n\to\infty}2^n a_n^r=\infty \quad\text{for every }1<r<p, \qquad \limsup_{n\to\infty}2^n b_n^s=\infty \quad\text{for every }1<s<q.
\]
Then the corresponding Faber--Schauder series
\[
    f(t)=\sum_{n=0}^{\infty}\sum_{k\in I_n}2^{\frac n2}a_ne_{n,k}(t), \qquad g(t)=\sum_{n=0}^{\infty}\sum_{k\in I_n}2^{\frac n2}b_ne_{n,k}(t)
\]
satisfy $f\in\EE^p$ and $g\in\EE^q$, while the pair cannot satisfy the finite-variation assumptions of the classical Young theorem.

\medskip

\subsection{Stochastic examples}   \label{subsec:stochastic-examples}

All the constructions above involved deterministic functions, but the same coefficient-side method also yields pathwise integrals for suitable stochastic processes. For instance, the last Faber--Schauder construction in Example~\ref{ex:beyond-variation-young} has the following stochastic version.

\begin{example}[Stochastic pairs beyond the variation Young regime]    \label{ex:stochastic-beyond-variation-young}
    Let $p,q>1$ be conjugates, and recall
    \[
        a_n:=2^{-\frac np}(n+1)^{-\frac2p}, \qquad
        b_n:=2^{-\frac nq}(n+1)^{-\frac2q}.
    \]
    Let $(\xi_{n,k})_{n\ge0,k\in I_n}$ and $(\eta_{n,k})_{n\ge0,k\in I_n}$ be two independent families of independent standard Gaussian random variables, and define
    \[
        F(t):=\sum_{n=0}^{\infty}\sum_{k\in I_n}2^{\frac n2}a_n\xi_{n,k}e_{n,k}(t), \qquad
        G(t):=\sum_{n=0}^{\infty}\sum_{k\in I_n}2^{\frac n2}b_n\eta_{n,k}e_{n,k}(t), \qquad t \in [0,1].
    \]
    Then $F$ and $G$ are well-defined continuous centered Gaussian processes, and $F\in\EE^p$, $G\in\EE^q$ almost surely. Consequently, the Faber--Schauder integrals $I_{F,G}$ and $I_{G,F}$ exist on $[0,1]$ almost surely; moreover, integration by parts and the dyadic Young--Lo\`eve estimate hold.

    On the other hand, almost surely, there are no exponents $r,s\ge1$ such that $F$ has finite $r$-variation, $G$ has finite $s$-variation, and $1/r+1/s>1$.
\end{example}

\begin{proof}   
    For $F$, we have
    \[
        \theta^F_{n,k}=2^{\frac n2}a_n\xi_{n,k}, \qquad n\ge0, \quad k\in I_n,
    \]
    and hence
    \[
        \mathbb E\Big[\big(\zetaen{F}{p}(n)\big)^p\Big]
        = a_n^p\sum_{k\in I_n}\mathbb E|\xi_{n,k}|^p
        = \mathbb E|\xi_{0,0}|^p\,2^na_n^p
        = \mathbb E|\xi_{0,0}|^p\,(n+1)^{-2}.
    \]
    The affine coefficient vanishes, since all $e_{n,k}$ with $n\ge0$ vanish at $0$ and $1$. Therefore, by Fubini's theorem, $\zetaen{F}{p}\in\ell^p(\N_{-1})$ almost surely. For the Schauder tail, the Gaussian tail bound and the union bound give
    \[
        \mathbb P\bigg(\max_{k\in I_n}|\xi_{n,k}|>C\sqrt n\bigg)
        \le 2^{n+1}e^{-\frac{C^2}{2}n}.
    \]
    Choosing $C>0$ large enough that $C^2>2\log2$, we obtain
    \[
        \sum_{n=1}^{\infty}\mathbb P\bigg(\max_{k\in I_n}|\xi_{n,k}|>C\sqrt n\bigg)<\infty.
    \]
    By the Borel--Cantelli lemma, almost surely, for all sufficiently large $n$,
    \[
        \max_{k\in I_n}|\xi_{n,k}|\le C\sqrt n.
    \]
    Thus, almost surely, for all sufficiently large $n$,
    \begin{equation}    \label{bound-stochastic-full}
        2^{-\frac n2}\sup_{k\in I_n}\big|\theta^F_{n,k}\big|
        = a_n\max_{k\in I_n}|\xi_{n,k}|
        \le C\sqrt n\,2^{-\frac np}(n+1)^{-\frac2p}.
    \end{equation}
    This bound implies the uniform convergence of the defining series, since at each point at most one Faber--Schauder function at level $n$ is nonzero and has height at most $2^{-n/2-1}$. Hence, $F$ has continuous sample paths. Moreover, if $F_N$ denotes the level-$N$ partial sum, then
    \[
        \mathbb E\big|F(t)-F_N(t)\big|^2
        \le \frac14\sum_{n=N+1}^{\infty}a_n^2
        \xrightarrow{N\to\infty}0 \qquad \text{for each fixed } t \in [0, 1].
    \]
    Thus, for every finite collection of times, the corresponding random vectors of $F_N$ converge in $L^2$ to those of $F$. Since each $F_N$ is a centered Gaussian process, the limit $F$ is also a centered Gaussian process. From the bound \eqref{bound-stochastic-full}, we also have
    \[
        H_F(n) = \sum_{m=n}^{\infty}2^{-\frac m2}\sup_{k\in I_m}|\theta_{m,k}^F|
        \le C\sum_{m=n}^{\infty}\sqrt m\,2^{-\frac mp}(m+1)^{-\frac2p}
        \le C_p\sqrt n\,2^{-\frac np}(n+1)^{-\frac2p}
    \]
    for all sufficiently large $n$, and therefore $H_F\in\ell^p(\N_0)$ almost surely. Hence, $F\in\EE^p$ almost surely. The proof for $G\in\EE^q$ is identical, and the same argument shows that $G$ is a continuous centered Gaussian process. Corollary~\ref{cor:energy-space-full-theory} implies that the Faber--Schauder integrals $I_{F,G}$ and $I_{G,F}$ exist on $[0,1]$ almost surely.
    
    Moreover, this random pair is still outside the finite-variation Young regime. For every $1<r<p$,
    \[
        2^{-\frac{nr}{2}}\sum_{k\in I_n} \big|\theta^F_{n,k}\big|^r
        = a_n^r\sum_{k\in I_n}|\xi_{n,k}|^r.
    \]
    For each $n$, the variables $(|\xi_{n,k}|^r)_{k\in I_n}$ are independent and identically distributed with finite second moment. Hence,
    \[
        \mathbb{E}\Big[ 2^{-n}\sum_{k\in I_n} |\xi_{n,k}|^r \Big] = \mathbb{E}|\xi_{0,0}|^r, \qquad \text{Var}\Big( 2^{-n}\sum_{k\in I_n} |\xi_{n,k}|^r \Big) = 2^{-n}\text{Var}( |\xi_{0,0}|^r).
    \]
    By Chebyshev's inequality,
    \[
        \sum_{n=0}^{\infty}
        \mathbb{P}\bigg( \Big| 2^{-n}\sum_{k\in I_n}|\xi_{n,k}|^r- \mathbb{E}|\xi_{0,0}|^r \Big|>\varepsilon \bigg)
        \le \varepsilon^{-2} \text{Var}(|\xi_{0,0}|^r) \sum_{n=0}^{\infty}2^{-n} < \infty.
    \]
    The Borel--Cantelli lemma yields
    \[
        2^{-n}\sum_{k\in I_n}|\xi_{n,k}|^r \longrightarrow \mathbb{E}|\xi_{0,0}|^r \quad\text{almost surely},
    \]
    hence, almost surely, for all sufficiently large $n$
    \[
        2^{-\frac{nr}{2}}\sum_{k\in I_n}\big|\theta^F_{n,k}\big|^r
        \ge c_r 2^n a_n^r = c_r2^{n(1-\frac rp)}(n+1)^{-\frac{2r}{p}} \xrightarrow{n\to\infty}\infty
    \]
    for some constant $c_r>0$. Applying \eqref{con:equiv-zetaen-p-var} with this fixed exponent $r$ gives $\|F\|_{r\text{-var}}=\infty$ almost surely. Taking the intersection over all rational $r\in(1,p)$ and using the monotonicity of variation exponents, we obtain that $F$ has infinite $r$-variation for every $1\le r<p$ almost surely. Similarly, $G$ has infinite $s$-variation for every $1\le s<q$. Therefore, there are no exponents $r,s\ge1$ such that $F$ has finite $r$-variation, $G$ has finite $s$-variation, and
    \[
        \frac1r+\frac1s>1.
    \]
    Thus, the finite-variation Young theorem cannot be applied to this pair.
\end{proof}

We now study the energy-space criterion on fractional Brownian motion~(fBM). Let $B^H=(B^H_t)_{t\in[0,1]}$ be a fBM with Hurst index $H\in(0,1)$, that is, a centered Gaussian process with covariance
\begin{equation}    \label{eq:fBM-cov}
    \mathbb E[B^H_sB^H_t]=\frac12\big(s^{2H}+t^{2H}-|t-s|^{2H}\big), \qquad s,t\in[0,1].
\end{equation}

\begin{lemma}[Faber--Schauder coefficients of fBM]  \label{lem:fbm-fs-coefficients}
    For every $n\ge0$ and $k\in I_n$,
    \[
        \theta^{B^H}_{n,k} = 2^{\frac n2}\big(2B^H_{t_{n+1,2k+1}}-B^H_{t_{n,k}}-B^H_{t_{n,k+1}}\big)
    \]
    is a centered Gaussian random variable. Moreover, for every $r>0$ there exists a constant $C_{H,r}>0$ such that
    \[
        \mathbb E\big|\theta^{B^H}_{n,k}\big|^r = C_{H,r}2^{nr(\frac12-H)}, \qquad n\ge0, \quad k\in I_n.
    \]
\end{lemma}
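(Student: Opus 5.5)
The coefficient formula is just \eqref{eq:FS-coeff} applied to $f=B^H$. Since $\theta^{B^H}_{n,k}$ is a fixed linear combination of the Gaussian vector $(B^H_{t_{n,k}},B^H_{t_{n+1,2k+1}},B^H_{t_{n,k+1}})$, it is a centered Gaussian random variable. For a centered Gaussian $Z$ with variance $\sigma^2$ we have $\mathbb E|Z|^r=m_r\sigma^r$, where $m_r:=\mathbb E|\xi|^r$ for a standard normal $\xi$ and $m_r<\infty$ for every $r>0$. So the whole statement reduces to computing the variance and showing that it equals $c_H\,2^{n(1-2H)}$ with $c_H>0$ independent of $k$. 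We can then take $C_{H,r}:=m_r c_H^{r/2}$.

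To compute the variance, write the second difference as a difference of two increments. Let $s=t_{n,k}$, $u=t_{n+1,2k+1}$, $t=t_{n,k+1}$ and $h=2^{-(n+1)}$. Then
\[
    2B^H_u-B^H_s-B^H_t=(B^H_u-B^H_s)-(B^H_t-B^H_u).
\]
The fBM covariance \eqref{eq:fBM-cov} implies stationarity of increments, $\mathbb E|B^H_b-B^H_a|^2=|b-a|^{2H}$. For the two adjacent increments of length $h$, the covariance is
\[
    \mathbb E\big[(B^H_u-B^H_s)(B^H_t-B^H_u)\big]=\tfrac12\big((2h)^{2H}-2h^{2H}\big),
\]
by the polarization identity $2xy=(x+y)^2-x^2-y^2$ with $x+y=B^H_t-B^H_s$. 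Hence
\[
    \mathbb E\big(2B^H_u-B^H_s-B^H_t\big)^2=2h^{2H}-\big((2h)^{2H}-2h^{2H}\big)=(4-2^{2H})h^{2H}.
\]
This depends only on $h$, not on $k$. Multiplying by $2^n$ gives
\[
    \operatorname{Var}\theta^{B^H}_{n,k}=(4-2^{2H})2^{-2H}\,2^{n(1-2H)}.
\]

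The only point needing care is positivity of the constant: $4-2^{2H}>0$ because $H<1$. This guarantees a genuinely nondegenerate Gaussian, which the later almost-sure threshold arguments will need for lower bounds. Raising the variance to the power $r/2$ yields
\[
    \mathbb E|\theta^{B^H}_{n,k}|^r=m_r\big((4-2^{2H})2^{-2H}\big)^{r/2}\,2^{nr(\frac12-H)},
\]
which is the claim. There is no real obstacle beyond this bookkeeping. The key structural fact is self-similarity together with stationarity of increments, which makes the variance exactly a power of $2^{-n}$ with no dependence on $k$.
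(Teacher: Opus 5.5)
Your proof is correct, but it reaches the result by a slightly different route from the paper. The paper computes no covariance. It uses stationarity of increments and self-similarity to obtain the identity in law $\theta^{B^H}_{n,k}\stackrel{d}{=}2^{n(\frac12-H)}\big(2B^H_{1/2}-B^H_0-B^H_1\big)$. It then defines $C_{H,r}:=\mathbb E|2B^H_{1/2}-B^H_0-B^H_1|^r$ and asserts, without computation, that this Gaussian variable is nondegenerate.

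You instead compute the variance directly from the covariance by polarization, obtaining $(2^{2-2H}-1)\,2^{n(1-2H)}$, which matches the paper's remark $C_{H,2}=2^{2-2H}-1$. You then apply the Gaussian moment scaling $\mathbb E|Z|^r=m_r\sigma^r$.

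Your route gives an explicit constant $C_{H,r}=m_r(2^{2-2H}-1)^{r/2}$. It also gives an actual proof that $C_{H,r}>0$, via $4-2^{2H}>0$ for $H<1$, which the paper only states.

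The paper's route has two advantages. Equality in law delivers all absolute moments at once, with no appeal to the Gaussian moment formula. And the same scaling argument applies verbatim to the joint law of the whole row $(\theta^{B^H}_{n,k})_{k\in I_n}$. That row-wise identity is what the paper needs later in the proof of the sharp energy threshold for fBM.
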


\begin{proof}
    By stationary increments and self-similarity of fBM,
    \[
        2B^H_{t_{n+1,2k+1}}-B^H_{t_{n,k}}-B^H_{t_{n,k+1}}
        \stackrel{d}{=}
        2^{-nH}\big(2B^H_{1/2}-B^H_0-B^H_1\big).
    \]
    Multiplying by $2^{n/2}$ gives
    \begin{equation}    \label{eq:fBM-increments-dist}
        \theta^{B^H}_{n,k}\stackrel{d}{=}2^{n(\frac12-H)}\big(2B^H_{1/2}-B^H_0-B^H_1\big).
    \end{equation}
    The random variable on the right-hand side is centered Gaussian and nondegenerate for $H\in(0,1)$. Taking absolute moments and setting $C_{H,r} := \mathbb E |2B^H_{1/2}-B^H_0-B^H_1|^r$ proves the claim.
\end{proof}

Note that when $r = 2$, we have $C_{H,2}=2^{2-2H}-1$, in agreement with the corresponding variance formula in \cite[(6.6)]{fake_fBM}.

The following result identifies the sharp criterion for membership of fBM in the Faber--Schauder energy spaces.

\begin{theorem}[Sharp energy threshold for fBM]   \label{thm:fbm-energy-space}
    Let $p>1$ and $H\in(0,1)$.  Then
    \[
        \mathbb{P}(B^H\in \EE^p) =
        \begin{cases}
        1, & H>1/p,\\
        0, & H\le 1/p.
        \end{cases}
    \]
    Equivalently, $B^H\in \EE^p$ almost surely if and only if $H>1/p$.
\end{theorem}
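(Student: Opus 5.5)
For the case $H>1/p$, the plan is to verify the three components of $\|B^H\|_{\EE^p}$ separately. The uniform norm is finite because $B^H$ has continuous paths, and the affine level is $2^{1-\frac1p}|B^H_1|<\infty$. For the level energies, Lemma~\ref{lem:fbm-fs-coefficients} with $r=p$ gives
\[
    \mathbb E\big[\big(\zetaen{B^H}{p}(n)\big)^p\big]=2^{-\frac{np}{2}}\,2^n\,C_{H,p}\,2^{np(\frac12-H)}=C_{H,p}\,2^{n(1-pH)}.
\]
This is summable since $pH>1$, so Tonelli yields $\zetaen{B^H}{p}\in\ell^p(\N_{-1})$ almost surely. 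For the Schauder tail, I will bound the coefficient maxima exactly as in Example~\ref{ex:stochastic-beyond-variation-young}. By \eqref{eq:fBM-increments-dist}, each $\theta^{B^H}_{n,k}$ is Gaussian with standard deviation $c_H2^{n(\frac12-H)}$. The union bound over the $2^n$ indices and Borel--Cantelli then give, almost surely for large $n$,
\[
    \sup_k|\theta^{B^H}_{n,k}|\le C\sqrt n\,2^{n(\frac12-H)}.
\]
Hence $2^{-\frac m2}\sup_k|\theta_{m,k}|\le C\sqrt m\,2^{-Hm}$, so $H_{B^H}(n)\lesssim\sqrt n\,2^{-Hn}$, which lies in $\ell^p$. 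Note that this step only needs $H>0$; no independence of the coefficients is required, only the marginal Gaussian tails. Alternatively, one could invoke Kolmogorov: $B^H\in C^{\gamma}$ for every $\gamma<H$, and choosing $\gamma\in(1/p,H)$, Proposition~\ref{prop:holder-embedding-energy} gives $B^H\in\EE^p$ directly. This is the shortest route, and I would use it as the main argument, keeping the moment computation as a remark.

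For the case $H\le1/p$, it suffices to show $\zetaen{B^H}{p}\notin\ell^p$ almost surely. Suppose I can show that almost surely
\[
    2^{-n}\sum_{k\in I_n}|\theta^{B^H}_{n,k}|^p\,2^{-np(\frac12-H)}\longrightarrow C_{H,p}.
\]
Then $(\zetaen{B^H}{p}(n))^p\ge c\,2^{n(1-pH)}\ge c>0$ for large $n$, the series diverges, and therefore $\mathbb P(B^H\in\EE^p)=0$. The mean is exactly $C_{H,p}$ by Lemma~\ref{lem:fbm-fs-coefficients}. For the almost sure convergence, I will use Chebyshev plus Borel--Cantelli as in Example~\ref{ex:stochastic-beyond-variation-young}. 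Rescaling by self-similarity, the normalized variables $X_k:=2^{-n(\frac12-H)}\theta^{B^H}_{n,k}$ have the law of the second differences $Y_k:=2B^H_{k+1/2}-B^H_k-B^H_{k+1}$ of fBM on the integer scale. These form a stationary Gaussian sequence.

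The main obstacle is that, unlike in the example, these variables are not independent, so I need to control the variance of $2^{-n}\sum_k|Y_k|^p$. The covariance $\rho(j)=\mathrm{Cov}(Y_0,Y_j)$ is a fourth-order difference of $|x|^{2H}$, and therefore decays like $|j|^{2H-4}$, which is summable for every $H<1$. For the nonlinear function $|\cdot|^p$ of jointly Gaussian variables, I will use the standard Hermite/Gebelein bound
\[
    |\mathrm{Cov}(|Y_0|^p,|Y_j|^p)|\le C|\rho(j)|/\mathrm{Var}(Y_0).
\]
This yields
\[
    \mathrm{Var}\Big(2^{-n}\sum_k|Y_k|^p\Big)\le C2^{-n}\sum_j|\rho(j)|\le C'2^{-n},
\]
which is summable in $n$. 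Chebyshev and Borel--Cantelli then give the almost sure convergence. An alternative fallback is the ergodic-type result for dyadic $p$-th variation of fBM, combined with Lemma~\ref{lem:dyadic-Xp-characterization}. However, that lemma only links boundedness and unboundedness, which suffices when $H<1/p$ but not at the boundary $H=1/p$. That is why I prefer the direct law of large numbers, which handles $H=1/p$ by giving the lower bound $\zetaen{}{}^p\ge c>0$, which is not summable.
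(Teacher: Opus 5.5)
Your proof is correct. The necessity half ($H\le 1/p$) follows the paper's argument essentially step by step:
\begin{itemize}
\item the row-wise identification of the normalized coefficients with the stationary sequence $Y_k$;
\item the fourth-difference bound $|\rho(m)|\lesssim m^{2H-4}$;
\item Gebelein's inequality and the resulting $O(2^{-n})$ variance bound;
\item Chebyshev's inequality plus Borel--Cantelli.
\end{itemize}
You also correctly see that the critical case $H=1/p$ needs the positive limit of the empirical averages. Mere unboundedness via Lemma~\ref{lem:dyadic-Xp-characterization} would not be enough there.

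For sufficiency, the moment and union-bound argument you sketch is exactly the paper's. The route you actually prefer is different, however. Kolmogorov's criterion places the continuous version of $B^H$ in $C^\gamma([0,1])$ for some $\gamma\in(1/p,H)$, almost surely. Proposition~\ref{prop:holder-embedding-energy} then gives $B^H\in\EE^p$ with no probabilistic work on the coefficients.

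This route is shorter and gives deterministic pathwise rates:
\[
\zetaen{B^H}{p}(n)\le 2^{1-\gamma}[B^H]_\gamma\, 2^{-n(\gamma-1/p)}, \qquad H_{B^H}(n)\lesssim [B^H]_\gamma\, 2^{-\gamma n}.
\]
Hence you get directly the exponential decay of $L^{(p)}_{B^H}(N)$ assumed in Theorem~\ref{thm:integral-energy-regularity}. It also makes clear that for pure fBM the sufficiency direction is nothing more than the H\"older embedding. So the genuinely new content of the theorem is the sharp necessity, which is consistent with Corollary~\ref{cor:pure-fbm-pairs} reproducing the Young threshold.

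The paper's route has its own advantages. It stays entirely on the coefficient side and needs only the marginal laws from Lemma~\ref{lem:fbm-fs-coefficients}, not an external regularity theorem. It also produces the moment formula $\mathbb E\big[(\zetaen{B^H}{p}(n))^p\big]=C_{H,p}2^{n(1-pH)}$, which makes the threshold $H=1/p$ visible from both sides. Finally, it runs in parallel with the other Gaussian examples of Section~\ref{subsec:stochastic-examples}.
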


\begin{proof}
    We work with the continuous modification of $B^H$, which exists for every $H\in(0,1)$.
    
    We first assume $H > 1/p$. The affine coefficient $\theta^{B^H}_{-1,0}=B^H_1-B^H_0$ is finite almost surely. For $n \ge 0$, Lemma~\ref{lem:fbm-fs-coefficients} gives
    \[
        \mathbb E\Big[\big(\zetaen{B^H}{p}(n)\big)^p\Big] = 2^{-\frac{np}{2}}\sum_{k\in I_n}\mathbb E \big|\theta^{B^H}_{n,k}\big|^p = c_{H,p}2^{n(1-pH)}.
    \]
    Since $pH>1$, we have
    \[
        \sum_{n=0}^{\infty}\mathbb E\Big[\big(\zetaen{B^H}{p}(n)\big)^p\Big] < \infty.
    \]
    Hence, by Fubini's theorem, $\zetaen{B^H}{p}\in\ell^p(\N_{-1})$ almost surely.
    
    For the Schauder tail condition, define
    \begin{equation}    \label{def:Znk}
        Z_{n,k}:=2^{-n(\frac12-H)}\theta^{B^H}_{n,k}, \qquad n\ge0, \quad k\in I_n.
    \end{equation}
    By \eqref{eq:fBM-increments-dist}, the random variables $Z_{n,k}$ are centered Gaussian with variance independent of $n$ and $k$. Hence, by the Gaussian tail bound and the union bound, one can choose $C>0$ such that
    \[
        \sum_{n=1}^{\infty} \mathbb P \Big(\max_{k\in I_n}|Z_{n,k}|>C\sqrt n\Big) < \infty.
    \]
    By the Borel--Cantelli lemma, almost surely, for all sufficiently large $n$,
    \[
        \max_{k\in I_n}|Z_{n,k}|\le C\sqrt n, \qquad \text{equivalently}, \qquad 2^{-\frac n2}\sup_{k\in I_n}\big|\theta^{B^H}_{n,k}\big|
        \le C\sqrt n\,2^{-Hn}.
    \]
    Hence, almost surely, for all sufficiently large $n$,
    \[
        H_{B^H}(n)
        = \sum_{m=n}^{\infty}2^{-\frac m2}\sup_{k\in I_m}\big|\theta^{B^H}_{m,k}\big|
        \le C\sum_{m=n}^{\infty}\sqrt m\,2^{-Hm}
        \le C_H\sqrt n\,2^{-Hn},
    \]
    where $C_H>0$ is a constant depending only on $H$ and on $C$. Since $(\sqrt n\,2^{-Hn})_{n\ge0}\in\ell^p(\N_0)$ for every $p>1$, the tail of $H_{B^H}$ belongs to $\ell^p(\N_0)$ almost surely. The remaining finitely many values are finite almost surely, hence $H_{B^H}\in\ell^p(\N_0)$ almost surely. Therefore, $B^H\in\EE^p$ almost surely.

    Conversely, suppose $H\le 1/p$. We will show that $\zeta^{(p)}_{B^H}\notin \ell^p(\mathbb{N}_{-1})$ almost surely, as this implies $B^H\notin \EE^p$ almost surely. For $n\ge0$ and $k\in I_n$, we recall
    \[
        Z_{n,k}:=2^{-n(\frac12-H)}\theta^{B^H}_{n,k} = 2^{nH}\big(2B^H_{t_{n+1,2k+1}}-B^H_{t_{n,k}}-B^H_{t_{n,k+1}}\big).
    \]
    By stationary increments and self-similarity of fBM, for each fixed $n$,
    \begin{equation}    \label{eq:row-wise-dist}
        (Z_{n,0},\ldots,Z_{n,2^n-1}) \overset{d}{=} (Y_0,\ldots,Y_{2^n-1}),
    \end{equation}
    where
    \[
        Y_k:=2\widetilde B^H_{k+1/2}-\widetilde B^H_k-\widetilde B^H_{k+1},
    \]
    and $\widetilde B^H$ is an fBM with the same Hurst index, defined on $[0,\infty)$. Thus $(Y_k)_{k\ge0}$ is a centered stationary Gaussian sequence.
    
    We first estimate its covariance. Let $\rho(m):=\mathbb{E}[Y_0Y_m]$ for $m\ge0$. Define
    \[
        X_j:=\widetilde B^H_{(j+1)/2}-\widetilde B^H_{j/2}, \qquad j\ge0.
    \]
    Then $Y_k=X_{2k}-X_{2k+1}$. If $\gamma(r):=\mathbb{E}[X_0X_r]$, then from \eqref{eq:fBM-cov}
    \[
        \gamma(r)=2^{-2H-1}\left(|r+1|^{2H}+|r-1|^{2H}-2|r|^{2H}\right), \qquad r\ge0.
    \]
    Therefore, for $m\ge1$,
    \[
        \rho(m) = \mathbb{E}\big[ (X_0-X_1)(X_{2m}-X_{2m+1}) \big] = 2\gamma(2m)-\gamma(2m-1)-\gamma(2m+1).
    \]
    For $m\ge2$, substituting the explicit formula for $\gamma$ gives
    \[
        \rho(m)=2^{-2H-1}\left(-(2m+2)^{2H}+4(2m+1)^{2H}-6(2m)^{2H}+4(2m-1)^{2H}-(2m-2)^{2H}\right).
    \]
    Writing $u(x)=x^{2H}$ for $x>0$, the expression in parentheses is $-\Delta^4u(2m-2)$, where
    \[
        \Delta^4u(x):=u(x+4)-4u(x+3)+6u(x+2)-4u(x+1)+u(x).
    \]
    Let $u^{(4)}$ denote the fourth derivative of $u$. Since
    \[
        u^{(4)}(x)=2H(2H-1)(2H-2)(2H-3)x^{2H-4}, \qquad x>0,
    \]
    we have
    \[
        |u^{(4)}(x)|\le C_Hx^{2H-4}, \qquad x>0,
    \]
    where, here and below, $C_H>0$ denotes a constant depending only on $H$, whose value may change from line to line. Moreover, by repeated use of the fundamental theorem of calculus,
    \[
        \Delta^4u(x)=\int_{[0,1]^4}u^{(4)}(x+s_1+s_2+s_3+s_4)\,ds_1ds_2ds_3ds_4.
    \]
    Therefore, for $m\ge2$,
    \[
        |\rho(m)|\le C_H\sup_{y\in[2m-2,2m+2]} y^{2H-4}\le C_Hm^{2H-4}.
    \]
    Since $H<1$, we have $2H-4<-2$, and
    \begin{equation}    \label{ineq:rho-summable}
        \sum_{m=0}^{\infty}|\rho(m)|<\infty,
    \end{equation}
    where the finitely many terms $m=0,1$ are irrelevant for summability.
    
    We next show that the empirical averages of $|Z_{n,k}|^p$ converge almost surely. Set
    \begin{equation}    \label{def:Qn}
        Q_n:=2^{-n}\sum_{k=0}^{2^n-1}|Z_{n,k}|^p.
    \end{equation}
    By the row-wise distributional identity \eqref{eq:row-wise-dist}, we have $\mathbb{E}Q_n=\mathbb{E}|Y_0|^p$. Moreover, the covariance sequence of $(|Y_k|^p)_{k\ge0}$ is summable. Indeed, since $Y_0$ is nondegenerate, $\rho(0)>0$. For $m\ge1$, the random variables
    \[
        \frac{Y_0}{\sqrt{\rho(0)}} \quad \text{and} \quad \frac{Y_m}{\sqrt{\rho(0)}}
    \]
    form a standard bivariate normal vector with correlation $\rho(m)/\rho(0)$. By Gebelein's inequality for functions of a bivariate Gaussian vector \cite{Gebelein} (see also \cite[(1.1)]{Veraar2009}), applied to the centered square-integrable function
    \[
        x\mapsto |\rho(0)^{1/2}x|^p-\mathbb{E}|Y_0|^p,
    \]
    we obtain
    \[
        \big|\text{Cov}(|Y_0|^p,|Y_m|^p)\big| \le \frac{|\rho(m)|}{\rho(0)}\,\text{Var}(|Y_0|^p) \le C_{H,p}|\rho(m)|,
    \]
    for some constant $C_{H,p} > 0$. Here $\text{Var}(|Y_0|^p)<\infty$ because $Y_0$ is Gaussian. From \eqref{ineq:rho-summable}, we obtain
    \[
        \sum_{m=0}^{\infty}\big|\text{Cov}(|Y_0|^p,|Y_m|^p)\big|<\infty.
    \]
    By the distributional identity \eqref{eq:row-wise-dist}, the variance of $Q_n$ is equal to the variance of $2^{-n}\sum_{k=0}^{2^n-1}|Y_k|^p$. Hence
    \[
        \text{Var}(Q_n)=2^{-2n}\sum_{k,\ell=0}^{2^n-1}\text{Cov}(|Y_k|^p,|Y_\ell|^p).
    \]
    By stationarity, $\text{Cov}(|Y_k|^p,|Y_\ell|^p)$ depends only on $|\ell-k|$. More precisely, for $m\ge1$, there are $2^n-m$ pairs $(k,\ell)$ with $\ell=k+m$ and $2^n-m$ pairs with $k=\ell+m$. Hence
    \begin{align*}
        \text{Var}(Q_n) &= 2^{-2n}\bigg(2^n\text{Var}(|Y_0|^p)+2\sum_{m=1}^{2^n-1}(2^n-m)\text{Cov}(|Y_0|^p,|Y_m|^p)\bigg)
        \\
        & \le 2^{-2n}\bigg(2^n\text{Var}(|Y_0|^p)+2\sum_{m=1}^{2^n-1}(2^n-m)\big|\text{Cov}(|Y_0|^p,|Y_m|^p)\big|\bigg)
        \\
        & \le 2^{-n}\bigg(\text{Var}(|Y_0|^p)+2\sum_{m=1}^{\infty}\big|\text{Cov}(|Y_0|^p,|Y_m|^p)\big|\bigg) \le C_{H,p}\,2^{-n}.
    \end{align*}
    Here the constant $C_{H,p}$ is finite because $Y_0$ is Gaussian, so $\text{Var}(|Y_0|^p)<\infty$.
    Hence, for every $\varepsilon>0$, Chebyshev's inequality yields
    \[
        \sum_{n=0}^{\infty}\mathbb{P}\left(\left|Q_n-\mathbb{E}|Y_0|^p\right|>\varepsilon\right)\le \varepsilon^{-2}\sum_{n=0}^{\infty}\text{Var}(Q_n)<\infty.
    \]
    By the Borel--Cantelli lemma,
    \[
        Q_n\xrightarrow{n \to \infty} \mathbb{E}|Y_0|^p \quad \text{a.s.}
    \]
    Since $Y_0$ is nondegenerate Gaussian, $\mathbb{E}|Y_0|^p>0$.
    
    Finally, by the definition of the level energy in Definition~\ref{def:level-energies}, \eqref{def:Znk}, and \eqref{def:Qn}
    \[
        \Big(\zeta^{(p)}_{B^H}(n)\Big)^p = 2^{-\frac{np}{2}}\sum_{k=0}^{2^n-1}|\theta^{B^H}_{n,k}|^p=2^{n(1-pH)}Q_n.
    \]
    If $H<1/p$, then $(\zeta^{(p)}_{B^H}(n))^p\to\infty$ as $n\to\infty$ almost surely. If $H=1/p$, then
    \[
        \Big(\zeta^{(p)}_{B^H}(n)\Big)^p = Q_n\longrightarrow \mathbb{E}|Y_0|^p>0 \quad \text{a.s.}
    \]
    In both cases, the sequence $((\zeta^{(p)}_{B^H}(n))^p)_{n\ge0}$ is not summable. Therefore $\zeta^{(p)}_{B^H} \notin \ell^p(\mathbb{N}_{-1})$ and hence $B^H\notin \EE^p$ almost surely.
\end{proof}

\begin{corollary}[Pure fractional Brownian pairs]
\label{cor:pure-fbm-pairs}
    Let $B^{H_1}$ and $\widetilde B^{H_2}$ be fBMs with Hurst indices $H_1,H_2\in(0,1)$. Then the following are equivalent:
    \begin{enumerate}
        \item [(i)] $H_1+H_2>1$.
        \item [(ii)] There exist conjugates $p,q>1$ such that $B^{H_1}\in\EE^p$ and $\widetilde B^{H_2}\in\EE^q$ almost surely.
    \end{enumerate}
    In this case, the Faber--Schauder integrals $I_{B^{H_1},\widetilde B^{H_2}}$ and $I_{\widetilde B^{H_2},B^{H_1}}$ exist almost surely.
\end{corollary}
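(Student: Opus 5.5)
The proof reduces entirely to Theorem~\ref{thm:fbm-energy-space}, which says that for any $p>1$ we have $B^H\in\EE^p$ almost surely if and only if $H>1/p$, and otherwise $\mathbb P(B^H\in\EE^p)=0$. Condition (ii) therefore becomes a purely arithmetic statement about the Hurst indices, and we prove both implications by choosing or constraining the exponents.

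For (i)$\Rightarrow$(ii), we argue exactly as in the proof of Theorem~\ref{thm:holder-young-regime}. Since $H_1+H_2>1$, the interval $(1-H_2,H_1)$ is nonempty, and it is contained in $(0,1)$ because $H_1<1$ and $H_2<1$. We pick $1/p$ in this interval, so that $p>1$, and we let $q$ be the conjugate exponent. Then $1/p<H_1$ and $1/q=1-1/p<H_2$. Theorem~\ref{thm:fbm-energy-space} gives $B^{H_1}\in\EE^p$ almost surely and $\widetilde B^{H_2}\in\EE^q$ almost surely.

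For (ii)$\Rightarrow$(i), suppose conjugates $p,q$ are given with $B^{H_1}\in\EE^p$ and $\widetilde B^{H_2}\in\EE^q$ almost surely. Each membership then has probability one, so it is not the zero-probability case. By the dichotomy in Theorem~\ref{thm:fbm-energy-space}, this forces $H_1>1/p$ and $H_2>1/q$. Adding the two inequalities gives $H_1+H_2>1/p+1/q=1$.

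For the final assertion, we intersect the two almost-sure events: on this intersection $B^{H_1}\in\EE^p$ and $\widetilde B^{H_2}\in\EE^q$ with $1/p+1/q=1$. Corollary~\ref{cor:energy-space-full-theory}, applied pathwise on this probability-one event, yields the existence of $I_{B^{H_1},\widetilde B^{H_2}}$ and $I_{\widetilde B^{H_2},B^{H_1}}$. No independence or joint-law assumption on the two fBMs is needed, since the argument is pathwise once the two marginal events hold. Every step is a direct application of earlier results; the only point requiring care is using the zero-one dichotomy in the converse direction, which is what rules out "almost surely in $\EE^p$" when $H\le 1/p$.
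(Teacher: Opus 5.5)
Your proposal is correct and matches the paper's proof essentially step for step. You choose $1/p\in(1-H_2,H_1)$ with $q$ conjugate and apply Theorem~\ref{thm:fbm-energy-space}, use its necessity part to get $H_1>1/p$ and $H_2>1/q$, and then invoke Corollary~\ref{cor:energy-space-full-theory} on the intersection of the two almost-sure events to obtain both integrals.
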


\begin{proof}
    Suppose first that $H_1+H_2>1$. Then we can choose $p>1$ such that
    \[
        1-H_2<\frac1p<H_1.
    \]
    Let $q$ be the conjugate exponent of $p$. Then
    \[
        \frac1q=1-\frac1p<H_2.
    \]
    Hence $H_1>1/p$ and $H_2>1/q$. Theorem~\ref{thm:fbm-energy-space} gives $B^{H_1}\in\EE^p$ and $\widetilde B^{H_2}\in\EE^q$ almost surely.

    Conversely, suppose that there exist conjugates $p,q>1$ such that $B^{H_1}\in\EE^p$ and $\widetilde B^{H_2}\in\EE^q$ almost surely. By the necessity part of Theorem~\ref{thm:fbm-energy-space}, we must have
    \[
        H_1>\frac1p \quad \text{and} \quad H_2>\frac1q, \qquad \text{hence} \qquad H_1+H_2>\frac1p+\frac1q=1.
    \]
    When these equivalent conditions hold, the existence of the integrals follows from Corollary~\ref{cor:energy-space-full-theory}.
\end{proof}

The preceding corollary implies that, for pure fractional Brownian pairs, the energy-space criterion is sharp and exactly matches the familiar Young threshold. The next example shows that once one component is replaced by a random level-sparse Faber--Schauder process, the coefficient-side criterion can apply outside the H\"older--Young regime.

\begin{proposition}[Random level-sparse Faber--Schauder series] \label{prop:random-level-sparse-fs}
    Let $\alpha \in (0, 1)$, $\eta\in[0,1]$, and $p>1$ satisfy $p\alpha>\eta$. For each $n\ge0$, let $\Lambda_n\subset I_n$ be a nonempty active set such that $\#\Lambda_n\le C_\Lambda 2^{\eta n}$ for some constant $C_\Lambda>0$. Let $(\xi_{n,k})_{n\ge0,\,k\in\Lambda_n}$ be independent standard Gaussian random variables, and define
    \[
        F(t) := \sum_{n=0}^{\infty} \sum_{k\in\Lambda_n} 2^{n(\frac12-\alpha)}\xi_{n,k}e_{n,k}(t), \qquad t\in[0,1].
    \]
    Then, $F$ is a well-defined continuous centered Gaussian process and $F\in\EE^p$ almost surely. Moreover, $F$ is not $\gamma$-H\"older continuous for any $\gamma\in(\alpha,1]$, almost surely.
\end{proposition}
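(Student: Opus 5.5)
The plan is to follow the proof of Example~\ref{ex:stochastic-beyond-variation-young}, with the Gaussian maximum bound used there now applied to the random level-sparse coefficients, and then to borrow the non-H\"older argument from Example~\ref{ex:sparse-beyond-young}.

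\emph{Step 1: a.s.\ bound on the maximal coefficients.} The coefficients are $\theta^F_{n,k}=2^{n(\frac12-\alpha)}\xi_{n,k}$ for $k\in\Lambda_n$, and zero otherwise; the affine coefficient vanishes. The Gaussian tail bound and the union bound give
\[
\mathbb P\Big(\max_{k\in\Lambda_n}|\xi_{n,k}|>C\sqrt n\Big)\le 2\,C_\Lambda 2^{\eta n}e^{-C^2n/2},
\]
which is summable once $C^2>2\eta\log2$. By Borel--Cantelli, almost surely $\max_{k\in\Lambda_n}|\xi_{n,k}|\le C\sqrt n$ for all large $n$. Since at each point at most one tent $e_{n,k}$ of height at most $2^{-\frac n2-1}$ is nonzero, the level-$n$ contribution has sup norm at most $\frac12C\sqrt n\,2^{-\alpha n}$. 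The series therefore converges uniformly a.s., so $F$ is continuous. The argument that $F$ is a centered Gaussian process is the $L^2$ limit of the Gaussian partial sums, exactly as in Example~\ref{ex:stochastic-beyond-variation-young}; it uses $\sum_n 2^{-2\alpha n}<\infty$.

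\emph{Step 2: energy membership.} Proposition~\ref{prop:level-sparse-fs-membership} requires a deterministic bound $A_n\le C_A2^{n(\frac12-\alpha)}$, but here we only have $A_n\lesssim\sqrt n\,2^{n(\frac12-\alpha)}$. I would handle this in one of two ways. The first is to pick $\alpha'\in(\eta/p,\alpha)$, which is possible since $p\alpha>\eta$; then a.s.\ $A_n\le C_\omega 2^{n(\frac12-\alpha')}$ for all $n$, where the finitely many early levels are absorbed into the random constant. Applying the proposition pathwise with $\alpha'$ then gives $F\in\EE^p$ a.s. The second is to compute directly:
\[
\mathbb E\big[\zetaen{F}{p}(n)^p\big]=\mathbb E|\xi|^p\,\#\Lambda_n\,2^{-np\alpha}\le C\,2^{(\eta-p\alpha)n},
\]
which is summable, and Fubini gives $\zetaen{F}{p}\in\ell^p$ a.s. For the tail, $H_F(n)\le C\sum_{m\ge n}\sqrt m\,2^{-\alpha m}\lesssim\sqrt n\,2^{-\alpha n}\in\ell^p$ for large $n$. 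I would use the first route since it is shorter.

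\emph{Step 3: failure of H\"older regularity.} If $F\in C^\gamma$ with $\gamma>\alpha$, then \eqref{ineq:FS-coefficient-bound-Holder} gives $|\xi_{n,k_n}|\le C_\gamma 2^{-n(\gamma-\alpha)}$ for a fixed choice $k_n\in\Lambda_n$, with $C_\gamma$ random. It therefore suffices to show that, almost surely, for every rational $\gamma>\alpha$, $2^{n(\gamma-\alpha)}|\xi_{n,k_n}|\to\infty$ fails to be bounded. Since the variables $\xi_{n,k_n}$ are independent standard Gaussians,
\[
\mathbb P\big(|\xi_{n,k_n}|\ge 1\big)=c>0
\]
for every $n$, so the second Borel--Cantelli lemma gives $|\xi_{n,k_n}|\ge1$ infinitely often a.s. This contradicts the bound for every $\gamma>\alpha$ simultaneously, because the bound forces $|\xi_{n,k_n}|\to0$. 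No intersection over $\gamma$ is even needed.

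The main obstacle is Step 2: one must make sure the $\sqrt n$ loss coming from the Gaussian maxima does not break the hypothesis $p\alpha>\eta$. The slack in choosing $\alpha'$ strictly between $\eta/p$ and $\alpha$ handles this.
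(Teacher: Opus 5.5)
Your proof is correct. Steps 1 and 3 coincide with the paper's argument: the union bound and Borel--Cantelli for the level maxima, $L^2$ limits of the Gaussian partial sums, and the second Borel--Cantelli lemma for one active coefficient per level, which handles all $\gamma>\alpha$ at once.

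The only difference is your preferred route in Step 2. The paper uses your second route. It bounds $\mathbb E[(\zetaen{F}{p}(n))^p]\le C2^{(\eta-p\alpha)n}$ and applies Fubini. This controls the level energies with no loss in the exponent and no maximal bound (only $\mathbb E|\xi|^p<\infty$ enters). The $\sqrt n$ loss from the Gaussian maxima then appears only in the Schauder tail, where it is harmless because $\sqrt n\,2^{-\alpha n}\in\ell^p$ for every $\alpha>0$.

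Your first route instead runs everything off the single almost-sure event that eventually $\max_{k\in\Lambda_n}|\xi_{n,k}|\le C\sqrt n$. It then reduces pathwise to the deterministic Proposition~\ref{prop:level-sparse-fs-membership} with some $\alpha'\in(\eta/p,\alpha)$. This is legitimate: the proposition is applied for each fixed $\omega$ with a random constant $C_A(\omega)$; $\alpha'>0$ holds even when $\eta=0$; and the strict inequality $p\alpha>\eta$ leaves room for $p\alpha'>\eta$.

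What each buys: the paper's route gives the slightly stronger integrability $\mathbb E\|\zetaen{F}{p}\|_{\ell^p}^p<\infty$. Yours presents the random result as a corollary of the deterministic one. Through the proof of that proposition, it also gives pathwise exponential decay, $\zetaen{F}{p}(n)\lesssim_\omega 2^{-(\alpha'-\eta/p)n}$ and $H_F(n)\lesssim_\omega 2^{-\alpha' n}$. That is the kind of bound needed for the hypothesis $L^{(p)}_F(N)\le C2^{-\rho N}$ of Theorem~\ref{thm:integral-energy-regularity}, which the paper's closing remark claims for these processes without a separate argument.
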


\begin{proof}
    We first compute the level energies. For $n \ge 0$
    \[
        \theta^F_{n,k}=
        \begin{cases}
            2^{n(\frac12-\alpha)}\xi_{n,k}, & k\in\Lambda_n,\\
            ~~~0, & k\notin\Lambda_n.
        \end{cases}
    \]
    Hence
    \[
        \mathbb E\big[\big(\zetaen{F}{p}(n)\big)^p\big]
        = 2^{-\frac{np}{2}} \sum_{k\in\Lambda_n}2^{np(\frac12-\alpha)}\mathbb E|\xi_{n,k}|^p
        \le C2^{(\eta-p\alpha)n}.
    \]
    Since $p\alpha>\eta$, Fubini's theorem gives $\zetaen{F}{p}\in\ell^p(\N_{-1})$ almost surely.
    
    We next prove the Schauder tail condition. By the Gaussian tail bound and the union bound, choosing $C_0>0$ sufficiently large gives
    \[
        \sum_{n=1}^{\infty} \mathbb P\bigg( \max_{k\in\Lambda_n}|\xi_{n,k}|>C_0\sqrt n \bigg) < \infty.
    \]
    By the Borel--Cantelli lemma, almost surely, for all sufficiently large $n$,
    \[
        \max_{k\in\Lambda_n}|\xi_{n,k}|\le C_0\sqrt n, \qquad \text{hence} \qquad
        2^{-\frac n2}\sup_{k\in I_n}\big|\theta^F_{n,k}\big|
        \le C_0\sqrt n\,2^{-\alpha n}.
    \]
    Consequently,
    \[
        H_F(n) = \sum_{m=n}^{\infty}2^{-\frac m2}\sup_{k\in I_m}|\theta_{m,k}^F| \le C_0\sum_{m=n}^{\infty}\sqrt m\,2^{-\alpha m}
        \le C_\alpha\sqrt n\,2^{-\alpha n}
    \]
    for a constant $C_{\alpha} > 0$ and for all sufficiently large $n$. Since $(\sqrt n\,2^{-\alpha n})_{n\ge0}\in\ell^p(\N_0)$ and the remaining finitely many values are finite almost surely, we obtain $H_F\in\ell^p(\N_0)$ almost surely.
    
    Thus, $F\in\EE^p$ almost surely. The same estimates also imply uniform convergence of the defining Faber--Schauder series, so $F$ has continuous sample paths. Since each finite partial sum is a centered Gaussian process and the partial sums converge in $L^2$ at every fixed time, the limit $F$ is a centered Gaussian process.

    It remains to prove the sharpness of the H\"older exponent. For each $n$, choose one index $k_n\in\Lambda_n$. Since the random variables $(\xi_{n,k_n})_{n\ge0}$ are independent standard Gaussians, the second Borel--Cantelli lemma gives
    \[
        \mathbb P\big(|\xi_{n,k_n}|\ge1\ \text{infinitely often}\big)=1.
    \]
    Hence, almost surely, along infinitely many levels $n$,
    \[
        \sup_{k\in I_n}\big|\theta^F_{n,k}\big|
        \ge 2^{n(\frac12-\alpha)}.
    \]
    Fix a sample path $\omega$ in this probability-one event. If $F(\omega) \in C^\gamma([0,1])$ for some $\gamma>\alpha$, then the estimate \eqref{ineq:FS-coefficient-bound-Holder} gives
    \[
        \sup_{k\in I_n}\big|\theta^{F(\omega)}_{n,k}\big|
        \le C_\gamma(\omega)2^{n(\frac12-\gamma)}, \qquad n\ge0,
    \]
    for some finite constant $C_\gamma(\omega)>0$. This contradicts the preceding lower bound along infinitely many levels, since it would imply $2^{n(\gamma-\alpha)}\le C_\gamma(\omega)$ along infinitely many $n$. Thus, almost surely, $F$ is not $\gamma$-H\"older continuous for any $\gamma\in(\alpha,1]$.
\end{proof}

\begin{example}[A mixed Gaussian/fractional Brownian pair]  \label{ex:mixed-sparse-fbm}
    Let $H\in(0,1)$ and $\alpha\in(0,1)$ satisfy $\alpha+H\le1$. Let $\eta\in[0,1]$ satisfy $\eta(1-H)<\alpha$. For each $n\ge0$, choose a nonempty active set $\Lambda_n\subset I_n$ such that $\#\Lambda_n\le C_\Lambda 2^{\eta n}$ for some constant $C_\Lambda>0$. Let $(\xi_{n,k})_{n\ge0,\,k\in\Lambda_n}$ be independent standard Gaussian random variables, independent of $B^H$, and define
    \[
        F(t):= \sum_{n=0}^{\infty} \sum_{k\in\Lambda_n} 2^{n(\frac12-\alpha)}\xi_{n,k}e_{n,k}(t), \qquad t\in[0,1].
    \]
    Then, there exist conjugate exponents $p,q>1$ such that $F\in\EE^p$ and $B^H\in\EE^q$ almost surely. Consequently, the Faber--Schauder integrals $I_{F,B^H}$ and $I_{B^H,F}$ exist on $[0,1]$ almost surely.

    On the other hand, the classical H\"older--Young condition cannot be verified for this pair almost surely.
\end{example}

\begin{proof}
    Since $\eta(1-H)<\alpha$, we can choose $p>1$ such that
    \[
        1-H<\frac1p<\frac{\alpha}{\eta}
    \]
    when $\eta>0$. If $\eta=0$, choose any $p>1$ such that
    \[
        1-H<\frac1p<1.
    \]
    Let $q$ be the conjugate exponent of $p$. Then
    \[
        \frac1q=1-\frac1p<H.
    \]
    Also, by the choice of $p$, we have $p\alpha>\eta$.

    Hence, Proposition~\ref{prop:random-level-sparse-fs} gives $F\in\EE^p$ almost surely and Theorem~\ref{thm:fbm-energy-space} implies $B^H\in\EE^q$ almost surely. Therefore, the Faber--Schauder integral conclusions follow from Corollary~\ref{cor:energy-space-full-theory}.

    Finally, we show that the classical H\"older--Young condition cannot be verified. Proposition~\ref{prop:random-level-sparse-fs} implies, almost surely, that $F$ is not $\gamma$-H\"older continuous for any $\gamma \in (\alpha, 1]$. Also, almost surely, fBM $B^H$ is not $\delta$-H\"older continuous for any $\delta \in (H,1]$ (see, e.g., \cite[Theorem~1.6.1]{Biagini_Hu_Oksendal_Zhang}). If the H\"older--Young condition were verifiable, then there would exist exponents $\gamma,\delta>0$ such that
    \[
        F\in C^\gamma([0,1]), \qquad B^H\in C^\delta([0,1]), \qquad \gamma+\delta>1.
    \]
    Necessarily $\gamma\le\alpha$ and $\delta\le H$, and therefore $\gamma+\delta\le\alpha+H\le1$, a contradiction.
\end{proof}

\begin{remark}[On the additional energy-regularity assumption]
    The additional assumption in Theorem~\ref{thm:integral-energy-regularity},
    \[
        L_f^{(p)}(N)\le C2^{-\rho N},
    \]
    is not needed for the existence of the Faber--Schauder integral; it is a stronger decay condition on the integrand. However, it is satisfied in several of the examples above. For instance, the H\"older estimate in Proposition~\ref{prop:holder-embedding-energy} gives exponential decay whenever the integrand has a positive margin $\alpha>1/p$, and hence applies to the H\"older--Young situation of Theorem~\ref{thm:holder-young-regime}. The same type of exponential decay holds for the deterministic level-sparse class in Proposition~\ref{prop:level-sparse-fs-membership}, and therefore for Example~\ref{ex:sparse-beyond-young}, whenever $p\alpha>\eta_f$ for the chosen integrand.

    In the stochastic examples, the random level-sparse Gaussian processes in Proposition~\ref{prop:random-level-sparse-fs} also satisfy this condition almost surely whenever $p\alpha>\eta$; this is the mechanism used in the mixed example, Example~\ref{ex:mixed-sparse-fbm}. Fractional Brownian motion also satisfies the condition away from the critical index: by the estimates in the proof of Theorem~\ref{thm:fbm-energy-space}, if $B^H\in\EE^p$ with $H>1/p$, then its level energies decay exponentially up to harmless polynomial factors.

    By contrast, the full-level constructions beyond the variation Young regime in Example~\ref{ex:beyond-variation-young} and Example~\ref{ex:stochastic-beyond-variation-young} have only polynomial level-energy tails. Thus, although the basic Faber--Schauder integrals exist for these examples, they generally fall outside the additional regularity regime of Theorem~\ref{thm:integral-energy-regularity}. In this sense, Theorem~\ref{thm:integral-energy-regularity} identifies a more regular subclass of the energy-space theory, suitable for further operations such as iterated integration.
\end{remark}

\bigskip

\section{Conclusion}    \label{sec:conclusion}

Our results show that the Faber--Schauder energy framework provides a first-order pathwise integration theory driven by dyadic coefficient summability. We conclude by providing a simple consequence of the energy condition which clarifies the relation between the Faber--Schauder integral and F\"ollmer-type pathwise integration \cite{follmer1981}.

\begin{proposition}[Zero dyadic variation and covariation]  \label{prop:zero-dyadic-variation-covariation}
    Let $p>1$ and $f\in\EE^p$. Then, for every $r\ge p$, the dyadic $r$-th variation defined in \eqref{p-th-variation} satisfies
    \[
        \sup_{t\in[0,1]}[f]^{(r)}_{\T_n}(t)\xrightarrow{n\to\infty}0.
    \]
    In particular, $f$ has vanishing $r$-th variation along the dyadic partition sequence for every $r\ge p$. Moreover, if $p,q>1$ are conjugates, $f\in\EE^p$ and $g\in\EE^q$, then
    \[
        \sup_{0\le j\le2^n} \bigg| \sum_{k=0}^{j-1} \big(f(t_{n,k+1})-f(t_{n,k})\big) \big(g(t_{n,k+1})-g(t_{n,k})\big) \bigg| \xrightarrow{n\to\infty}0.
    \]
\end{proposition}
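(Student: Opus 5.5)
The plan is to reduce both claims to full-level increment bounds from Lemma~\ref{lem:child-estimate}, together with the elementary monotonicity $\|\cdot\|_{\ell^r}\le\|\cdot\|_{\ell^p}$ for $r\ge p$.

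\textbf{Variation part.} Fix $t\in[0,1]$ and $n\ge0$. The sum defining $[f]^{(r)}_{\T_n}(t)$ in \eqref{p-th-variation} contains only terms with $t_{n,j}<t$. Every such term is a full level-$n$ increment except possibly the one with $t_{n,j}<t<t_{n,j+1}$, which is the partial increment $f(t)-f(t_{n,j})$. For the full increments, write $f(t_{n,j+1})-f(t_{n,j})=f_{n+1,L}(j)+f_{n+1,R}(j)$ and apply Lemma~\ref{lem:child-estimate} with exponent $p$; as in the proof of Theorem~\ref{thm:extended-integration-by-parts}, this gives
\[
\Big(\sum_{j\in I_n}|f(t_{n,j+1})-f(t_{n,j})|^p\Big)^{1/p}\le \mathcal A^{(p)}_f(n).
\]
Since $r\ge p$, the $\ell^r$ norm of the increment vector is at most its $\ell^p$ norm, so the full-increment part of the sum is at most $\mathcal A^{(p)}_f(n)^r$. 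The single partial increment is bounded by \eqref{ineq:two-endpoint-estimates}, namely $|f(t)-f(t_{n,j})|\le \mathcal A^{(p)}_f(n)+\tfrac12H_f(n)$. Combining the two pieces,
\[
\sup_t[f]^{(r)}_{\T_n}(t)\le \mathcal A^{(p)}_f(n)^r+\big(\mathcal A^{(p)}_f(n)+\tfrac12H_f(n)\big)^r.
\]
Since $f\in\EE^p$, the bound \eqref{ineq:Ap-bound} gives $\mathcal A^{(p)}_f\in\ell^p$, and also $H_f\in\ell^p$. Hence both $\mathcal A^{(p)}_f(n)$ and $H_f(n)$ tend to $0$, and the right-hand side vanishes as $n\to\infty$. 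In fact the tail estimate is not even needed for the partial term, since uniform continuity of $f$ already controls a single increment over an interval of length at most $2^{-n}$.

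\textbf{Covariation part.} For any $j\le 2^n$, bound the partial sum by the sum of absolute values over all of $I_n$ and apply H\"older with the conjugate pair $(p,q)$:
\[
\Big|\sum_{k<j}\Delta_k f\,\Delta_k g\Big|\le \Big(\sum_{k\in I_n}|\Delta_k f|^p\Big)^{1/p}\Big(\sum_{k\in I_n}|\Delta_k g|^q\Big)^{1/q}\le \mathcal A^{(p)}_f(n)\,\mathcal A^{(q)}_g(n).
\]
This is exactly the estimate already obtained in the proof of Theorem~\ref{thm:extended-integration-by-parts}. The bound is uniform in $j$, and the product $\mathcal A^{(p)}_f\mathcal A^{(q)}_g$ lies in $\ell^1$ by \eqref{ineq:A_pA_q-l1}, so it tends to $0$.

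No step is a real obstacle. The only point needing care is the boundary term at a non-dyadic $t$ in the variation part, which is handled by the endpoint estimate or by uniform continuity. The hypothesis $r\ge p$ is used only through the inequality $\|x\|_{\ell^r}\le\|x\|_{\ell^p}$.
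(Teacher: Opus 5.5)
Your proposal is correct and follows the paper's proof essentially step for step. It bounds the full level-$n$ increments by $\mathcal A^{(p)}_f(n)$ via Lemma~\ref{lem:child-estimate}, uses monotonicity of $\ell^r$ norms for $r\ge p$, adds a single partial increment at non-dyadic $t$, and applies H\"older for the covariation. The only cosmetic difference is that you bound the partial increment with the endpoint estimate \eqref{ineq:two-endpoint-estimates}, whereas the paper uses $\omega_f(2^{-n})^r$ directly; you also note that alternative yourself.
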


\begin{proof}
    Writing
    \[
        \Delta_{n,k}f:=f(t_{n,k+1})-f(t_{n,k})=f_{n+1,L}(k)+f_{n+1,R}(k), \qquad n \ge 0, \quad k\in I_n,
    \]
    Lemma~\ref{lem:child-estimate} and Minkowski's inequality give
    \[
        \bigg( \sum_{k\in I_n}\big|\Delta_{n,k}f\big|^p \bigg)^{\frac1p} \le \mathcal A_f^{(p)}(n).
    \]
    Since $f\in\EE^p$, we have $\mathcal A_f^{(p)}\in\ell^p(\N_0)$, and hence $\mathcal A_f^{(p)}(n)\to0$.

    If $r\ge p$, then
    \[
        \bigg(\sum_{k\in I_n}\big|\Delta_{n,k}f\big|^r\bigg)^{\frac1r}
        \le \bigg(\sum_{k\in I_n}\big|\Delta_{n,k}f\big|^p\bigg)^{\frac1p}
        \le \mathcal A_f^{(p)}(n).
    \]
    Therefore
    \[
        [f]^{(r)}_{\T_n}(1) = \sum_{k\in I_n}\big|\Delta_{n,k}f\big|^r
        \le \big(\mathcal A_f^{(p)}(n)\big)^r \xrightarrow{n\to\infty}0.
    \]
    For general $t\in[0,1]$, the quantity $[f]^{(r)}_{\T_n}(t)$ contains all full level-$n$ increments before $t$ and at most one additional partial increment of length at most $2^{-n}$. Hence
    \[
        [f]^{(r)}_{\T_n}(t) \le [f]^{(r)}_{\T_n}(1)+\omega_f(2^{-n})^r.
    \]
    Taking the supremum over $t\in[0,1]$ and using the uniform continuity of $f$ proves the first assertion.

    For the covariation estimate, H\"older's inequality gives, for every $j=0,\dots,2^n$,
    \[
        \bigg| \sum_{k=0}^{j-1}\Delta_{n,k}f\,\Delta_{n,k}g \bigg|
        \le \bigg( \sum_{k\in I_n}\big|\Delta_{n,k}f\big|^p \bigg)^{\frac1p} \bigg( \sum_{k\in I_n}\big|\Delta_{n,k}g\big|^q \bigg)^{\frac1q}
        \le \mathcal A_f^{(p)}(n) \, \mathcal A_g^{(q)}(n).
    \]
    Since $\mathcal A_f^{(p)}(n)\to0$ and $\mathcal A_g^{(q)}(n)\to0$ as $n \to \infty$, the right-hand side tends to zero.
\end{proof}

This observation clarifies the difference between the Faber--Schauder integral developed in this paper and F\"ollmer-type pathwise calculus. F\"ollmer's theory \cite{follmer1981} is designed to retain nontrivial quadratic variation, and its higher-order variants \cite{perkowski2019} retain nontrivial $m$-th variation for even integers $m\in2\mathbb N$. In the classical F\"ollmer--It\^o formula, the integrand is of gradient type, namely it arises as a derivative of a function of the integrator. Later extensions \cite{ananova2017, CF2010} allow more general non-anticipative or path-dependent integrands, but they still retain the It\^o-type feature that nontrivial quadratic or higher-order variation produces correction terms in change-of-variable formulae.

In contrast, paths in $\EE^p$ have vanishing $p$-th variation along the dyadic partition sequence, and pairs $(f,g)\in\EE^p\times\EE^q$ with conjugate exponents have zero dyadic covariation. This is why the integration-by-parts formula of Theorem~\ref{thm:extended-integration-by-parts} has no F\"ollmer-type correction term. The present theory should therefore be viewed as a first-order, zero-covariation pathwise integration theory based on Faber--Schauder energy spaces, complementary to F\"ollmer-type pathwise It\^o calculi. Moreover, the Faber--Schauder framework treats the integrand and the integrator as two independently prescribed paths, subject to conjugate energy conditions.

A natural future direction is to investigate whether a renormalized Faber--Schauder energy theory can be developed to incorporate nontrivial dyadic variation and F\"ollmer-type correction terms. A second direction is to construct iterated Faber--Schauder integrals, building on the energy-regularity result of Section~\ref{subsec:energy-regularity-integral}. Such a development would connect the present first-order theory with the Haar-Schauder approach to rough path integration of Gubinelli, Imkeller and Perkowski \cite{GubinelliImkellerPerkowski2016}, where Haar--Schauder decompositions, paraproducts, commutator estimates and L\'evy areas are used to construct pathwise stochastic integrals beyond the Young regime. The present theory is complementary to that approach: it gives a deterministic first-order bilinear integral on $\EE^p\times \EE^q$ without prescribing a L\'evy area or a controlled-path structure. Constructing compatible iterated integrals would be a natural next step toward rough path lifts and differential equations driven by paths in Faber--Schauder energy spaces.

\bigskip

\noindent \textbf{Funding}

\medskip

\noindent The author was supported by the National Research Foundation of Korea (NRF) grant funded by the Korean government (MSIT) RS-2025-00513609.

\bigskip

\renewcommand{\bibname}{References}
\bibliography{pathwise1}
\bibliographystyle{apalike}

\end{document}